\pgfplotsset{ticks=none}
\pgfplotsset{compat=1.16}
\newtheorem{theorem}{Theorem}[section]
\newtheorem{corollary}[theorem]{Corollary}
\newtheorem{proposition}[theorem]{Proposition}
\newtheorem{lemma}[theorem]{Lemma}
\numberwithin{equation}{section}
\theoremstyle{definition}
\theoremstyle{remark}
\newtheorem{remark}[theorem]{Remark}
\newtheorem*{remark*}{Remark}
\newcommand{\bbind}{{\mathbbm 1}\!}
\newcommand{\1}[1]{{\mathbbm 1}{\{#1\}}}
\newcommand{\2}[1]{{\mathbbm 1}_{#1}}
\definecolor{lightgrey}{rgb}{0.83, 0.83, 0.83}
\newcommand{\R}{{\mathbb R}}
\newcommand{\Z}{{\mathbb Z}}
\newcommand{\N}{{\mathbb N}}
\newcommand{\ZP}{{\mathbb Z}_+}
\newcommand{\RP}{{\mathbb R}_+}
\newcommand{\bbA}{{\mathbb A}}
\newcommand{\bbI}{{\mathbb I}}
\newcommand{\bbX}{{\mathbb X}}
\DeclareMathOperator{\Exp}{\mathbb{E}}
\renewcommand{\Pr}{{\mathbb P}}
\DeclareMathOperator{\Prl}{\Pr_{\lambda}}
\DeclareMathOperator{\Prll}{\Pr_{\ell,\lambda}}
\DeclareMathOperator{\Prnz}{\Pr^{\text{$r$},\Phi}_{\text{$n_0$,$z$}}}
\DeclareMathOperator{\Expl}{\Exp_{\lambda}}
\DeclareMathOperator{\Expll}{\Exp_{\ell,\lambda}}
\DeclareMathOperator{\sech}{sech}
\newcommand{\tra}{{\scalebox{0.6}{$\top$}}}
\newcommand{\eps}{\varepsilon}
\newcommand{\tl}{\tilde\ell}
\newcommand{\re}{{\mathrm{e}}}
\newcommand{\rc}{{\mathrm{c}}}
\newcommand{\ud}{{\mathrm d}}
\newcommand{\cA}{{\mathcal A}}
\newcommand{\cB}{{\mathcal B}}
\newcommand{\cC}{{\mathcal C}}
\newcommand{\cD}{{\mathcal D}}
\newcommand{\cE}{{\mathcal E}}
\newcommand{\cF}{{\mathcal F}}
\newcommand{\cG}{{\mathcal G}}
\newcommand{\cI}{{\mathcal I}}
\newcommand{\cJ}{{\mathcal J}}
\newcommand{\cP}{{\mathcal P}}
\newcommand{\cS}{{\mathcal S}}
\newcommand{\cX}{{\mathcal X}}
\newcommand{\cY}{{\mathcal Y}}
\newcommand{\cZ}{{\mathcal Z}}
\newcommand{\tcA}{{\tilde {\mathcal A}}}
\newcommand{\tcF}{{\tilde {\mathcal F}}}
\newcommand{\tcI}{{\tilde {\mathcal I}}}
\newcommand{\tx}{{\tilde x}}
\newcommand{\tA}{{\tilde A}}
\newcommand{\tZ}{{\tilde Z}}
\newcommand{\as}{\ \text{a.s.}}
\newcommand{\bigmid}{\; \bigl| \;}
\newcommand{\Bigmid}{\; \Bigl| \;}
\newcommand{\biggmid}{\; \biggl| \;}
\newcommand{\eqd}{\overset{d}{=}}
\title{Deposition, diffusion, and nucleation on an interval}
\author{Nicholas Georgiou\footnote{Department of Mathematical Sciences, Durham University, Upper Mountjoy Campus,
Stockton Road, Durham DH1 3LE.} \and Andrew R.\ Wade\footnotemark[1]} 
\date{\today}
\begin{document}

\maketitle

\begin{abstract}
Motivated by nanoscale growth of ultra-thin films,
we study a model of deposition, on an interval substrate, of particles
that perform Brownian motions until any two meet, when they nucleate to form a static island, which acts
as an absorbing barrier to subsequent particles. This is a continuum version of a lattice model 
studied in the applied literature.
We show that the associated interval-splitting process  converges
in the sparse deposition limit to a Markovian process (in the vein of Brennan and Durrett) governed by a 
splitting density with a compact Fourier series expansion but, apparently, no simple closed form. We show that the same splitting density governs the fixed deposition rate, large time asymptotics of the normalized gap distribution, so these asymptotics are independent of deposition rate. The splitting density is derived by solving an exit problem for planar Brownian motion from a right-angled triangle, extending work of Smith and Watson.
\end{abstract}

\medskip

\noindent
{\em Key words:} Adsorption; diffusion; nucleation; aggregation; interval splitting; thin film deposition; submonolayer growth; epitaxy.

\medskip

\noindent
{\em AMS Subject Classification:}  60K35 (Primary) 60J25, 60J65, 60J70, 82C22, 82D80 (Secondary).

\section{Introduction}
\label{sec:intro}

Surface phenomena are important in chemistry, physics, and materials science.
Our probabilistic model originates with the growth of ultra-thin films.
The non-equilibrium dynamics of these self-organized growth processes are 
of central importance in understanding the construction of nanomaterials
by deposition of monomers onto a solid substrate.
The materials involved may be crystals, metals, or semiconductors, for example,
deposition may be via vapour, chemical methods, or cathodic sputtering, and 
surface binding may be chemical (chemisorption) or physical (physisorption). 
In certain contexts, thin film growth is known as `epitaxy'.
Nanoscale growth is important in the development of many technological 
devices reliant on the remarkable electrical, optical and thermal properties of thin films,
and developments in construction of nanomaterials and in atomic-scale experimental observation
have fuelled interest over the last couple of decades. We refer to~\cite{aw,bs,bv,edm,pv,vsh,venables}
for scientific background and technological applications.

Under certain energetic conditions, the early stages of submonolayer growth 
are described by so-called Volmer--Weber dynamics.
Particles are deposited onto a substrate and undergo diffusion until
sufficiently many particles come into close proximity, when they
`nucleate' to form static islands, 
which form absorbing barriers with respect to the diffusion of other
particles.
The   nucleation threshold (i.e., the number of particles that must come together to nucleate)
increases with temperature.
As time goes on, more islands form by nucleation, and these islands grow
by the accumulation of additional diffusing particles. 
Eventually,
as coverage increases, 
monomers will tend to aggregate on growing islands rather than
initiate new islands, and these growing islands will coalesce into larger structures. 
Many interesting  aspects of these dynamics 
 are discussed e.g.~in~\cite{bv,be,bgm,bm95b,bm96,bw,brune,eb,gpe,glom,mul2009,mogl,oglm,pe} and references therein. 

In the present paper we study a one-dimensional model on an interval substrate which is both space- and time-continuous,
in which two particles suffice for nucleation (`binary nucleation').
Ours is a continuum relative of a type of lattice model that has been widely
used in the applied literature, e.g.~by Bartelt \& Evans~\cite{be}
and by Blackman \& Mulheran~\cite{bm96}, for both simulation and theoretical investigations;
see also~\cite{mpa} for a related early Monte Carlo study.
The model neglects both the spatial extent of islands (this `point island' assumption
is reasonable at low coverage) and also any potential evaporation of particles.

Informally, the model is as follows. At time $t=0$,
there are no active particles and the initial island locations are $\{ 0,1\}$,
the endpoints of the interval.
\begin{itemize}
\item \emph{Deposition.} Particles are deposited on $[0,1]$ according to a space-time Poisson process
on $[0,1] \times \RP$ with intensity $\lambda > 0$.
\item \emph{Diffusion and nucleation.} Each deposited particle performs an independent Brownian motion
until it either (i) 
hits an existing island, or (ii) meets another diffusing particle. In case (i), the particle is absorbed by the island. In case (ii), we say that
nucleation has occurred, and a new island is formed at the collision site. In either case, the particle's position becomes fixed for all subsequent time.
\end{itemize}

In \S\ref{sec:results} we present our main results for the nucleation process,
which can be understood with the informal definition of the process given above. The first (Theorem~\ref{thm:small-lambda}) is a description of the $\lambda \to 0$ limit as a particular Markovian interval-splitting process, characterized in part by a splitting density $\phi_0$ on $[0,1]$.
In contrast to previous applied work, which proposed various Beta distributions in this role, our $\phi_0$ does not seem to have a simple expression in terms of elementary functions. Our second main result (Theorem~\ref{thm:gap-statistics}) treats long-time statistics of the fixed-$\lambda$ process, in particular, the normalized gap distribution. It turns out that the large-time statistics of the 
 fixed-$\lambda$ process can be described via the $\lambda \to 0$ density $\phi_0$, and so, in particular, the limits are independent of $\lambda$. In \S\ref{sec:discussion} we make some comparisons with previous work (which mostly lies 
outside the probability literature) and comment on
 possible extensions. A formal construction of our process is presented in~\S\ref{sec:construction}, along with some fundamental initial observations. The key ingredient in our limit theorems is a quantitative approximation of the evolution of our process via an interval-splitting kernel; this is derived in~\S\ref{sec:splitting}. This approximation is then
 used to derive our $\lambda \to 0$ results (in~\S\ref{sec:sparse}) and our fixed-$\lambda$, long-time results (in~\S\ref{sec:fixed-rate-regime}). The splitting kernel requires evaluation of the density $\phi_0$, which we reduce to a problem of the exit position of planar Brownian motion from a right-angled triangle, started from an arbitrary interior point: the solution to this problem, which extends old work of Smith \& Watson~\cite{sw}, is presented in~\S\ref{sec:triangle}. In~\S\ref{sec:numerics} we collect necessary analytic properties of the splitting density~$\phi_0$, as well as some numerical approximations. Finally, in~\S\ref{sec:gap-tails} we apply results of Brennan \& Durrett~\cite{bd1,bd2} to derive normalized gap-distribution statistics for interval-splitting processes; this forms an ingredient to our results but is presented in some generality so as to facilitate comparison with the various other interval-splitting parameters that have been proposed in the literature for related nucleation problems.
 
 We mention briefly that there has been much recent interest in the probability literature in systems of interacting diffusing particles: see e.g.~\cite{bgs,crtz,ss}. Several of these models include deposition or particle birth, and coalescence of diffusing particles, but coalescing particles continue to diffuse, rather than nucleate.

\section{Main results}
\label{sec:results}

We are interested in the interval fragmentation process induced by our model. 
We defer a formal construction of the model (based on a marked Poisson point process)
to~\S\ref{sec:construction} below. 
Let $I_t$ denote the number of interior islands at time $t \in \RP := [0,\infty)$, so $I_0 = 0$.
Set $\nu_0 := 0$, and for $n \in \N := \{1,2,\ldots\}$ denote the time of the $n$th nucleation by
\begin{equation}
\label{eq:tau-def}
 \nu_n := \inf \{ t \in \RP : I_t = n \} ;\end{equation}
throughout the paper, we adopt the usual convention that~$\inf \emptyset := +\infty$.
The proof of the following fact will be given in~\S\ref{sec:construction}.

\begin{lemma}
\label{lem:tau-nice}
For all $\lambda >0$, $\nu_n < \infty$ a.s.~for all $n \in \N$, and $\lim_{n \to \infty} \nu_n = \infty$, a.s.
\end{lemma}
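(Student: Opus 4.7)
The statement contains two assertions, which I would treat separately.

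For $\lim_{n \to \infty} \nu_n = \infty$ a.s., I would argue by a straightforward counting observation: each nucleation consumes two active particles, and active particles arise only via deposition. Since depositions form a Poisson process of intensity $\lambda$ on $[0,1] \times \RP$, the number of depositions in any time window $[0,T]$ is Poisson$(\lambda T)$ and hence a.s.~finite; consequently the number of nucleations in $[0,T]$ is a.s.~at most half of this. Because the sequence $(\nu_n)$ is non-decreasing, having only finitely many $\nu_n$ below any finite $T$ forces $\nu_n \to \infty$ (with the convention that terminal infinite values trivially satisfy this limit).

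For $\nu_n < \infty$ a.s.~for every $n \in \N$ I would proceed by induction, the case $\nu_0 := 0$ being trivial. Suppose $\nu_{n-1} < \infty$ a.s. At time $\nu_{n-1}$, the $n-1$ interior islands together with $\{0,1\}$ partition $[0,1]$ into $n$ open gaps of total length $1$, so the largest has length $G \geq 1/n$. The key observation is that between nucleations the island configuration is frozen: on $\{\nu_n > \nu_{n-1}+t\}$, every particle deposited in $(\nu_{n-1}, \nu_{n-1}+t]$ is either still diffusing or has been absorbed by an existing island, so the partition of $[0,1]$ into gaps is unchanged over $[\nu_{n-1},\nu_{n-1}+t]$. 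Combined with the strong Markov property of the process (carried by the marked Poisson construction of~\S\ref{sec:construction}), it then suffices to show that, uniformly over states whose largest gap has length at least $1/n$, the conditional probability of a nucleation occurring within the next unit of time is bounded below by some $p_n > 0$. Iterating over disjoint unit intervals then gives $\Pr(\nu_n - \nu_{n-1} > k) \leq (1-p_n)^k \to 0$ as $k \to \infty$.

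The main step is to verify $p_n > 0$. I would do this by exhibiting an explicit favourable event: during a short window $[\nu_{n-1}, \nu_{n-1}+\delta]$, exactly two Poisson depositions occur on $[0,1]$ and both fall in a subinterval of length $\epsilon$ centred at the midpoint of the largest gap $[a,b]$, with $\epsilon \ll G$. The probability of this event is bounded below by a positive constant depending only on $\lambda$, $n$, $\delta$, and $\epsilon$. Conditional on it, the two resulting particles perform independent Brownian motions starting in $[a+G/3,b-G/3]$ at distance at most $\epsilon$ apart, so their difference is a one-dimensional Brownian motion of variance $2t$ starting within $\epsilon$ of $0$, which hits $0$ within the subsequent unit of time with probability bounded below; on that event, both particles remain in the central half of the gap with further probability bounded away from $0$, producing a collision -- that is, a nucleation. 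Other active particles present at time $\nu_{n-1}$ can only increase the nucleation probability (each provides an additional potential collision partner), and taking $\delta$ small ensures that contributions from further depositions and boundary absorptions during the window are negligible.
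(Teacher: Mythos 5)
Your proof is correct in substance, but it takes a genuinely different route from the paper's. The paper proves the second assertion exactly as you do ($\nu_k \geq s_{2k} \to \infty$ since each nucleation consumes two deposited particles; this is Lemma~\ref{lem:regeneration-cycles-cover-all-time}). For finiteness of $\nu_n$, however, the paper does not induct on $n$: it first builds the regeneration-cycle structure $[\sigma_k,\eta_k]$, proves the nontrivial tail bound of Lemma~\ref{lem:regeneration-times-finite} showing each cycle terminates, establishes the quantitative lower bound $\pi_\lambda(\cZ_{\eta_k};j,B_0) \geq \eps_0\lambda L_{I_{\eta_k},j}^4$ of Lemma~\ref{lem:intervals-shrink} (whose favourable event is essentially the same two-depositions-meet-in-a-gap construction you describe), and then applies L\'evy's extension of Borel--Cantelli together with a dichotomy: either $\sum_k (1+I_{\eta_k})^{-3}=\infty$, forcing nucleation in infinitely many cycles, or the sum converges, forcing $I_{\eta_k}\to\infty$ directly. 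The paper pays this overhead because the cycle machinery and the precise $L_j^4$ scaling are reused throughout (Proposition~\ref{prop:splitting-distribution}, Lemma~\ref{lem:max-vanishes}); your per-unit-time geometric-trials bound is more elementary and self-contained, since a crude $p_n>0$ suffices and you never need to control cycle durations. Two points in your argument deserve to be spelled out rather than asserted. First, the claim that other active particles ``can only increase the nucleation probability'' is true but for a slightly different reason than ``extra collision partners'': on your favourable event the only way the two designated particles fail to nucleate with each other is if one is absorbed by an island first, and since new islands arise only from nucleations, on the no-nucleation event the island set is frozen and consists of points your Brownian event keeps the particles away from; any other interaction with an active particle is itself a nucleation. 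Second, the two depositions occur at distinct times, so you must also include in the favourable event that the first particle stays within distance $\eps$ of its deposition point until the second arrives (compare the event $F_1$ in the proof of Lemma~\ref{lem:intervals-shrink}); this costs only a constant factor for small $\delta$, but without it the phrase ``at distance at most $\eps$ apart'' is not justified.
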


Let $\cZ_n$ denote the vector of island locations in $[0,1]$, listed left to right, at time $\nu_n$,
so $\cZ_n \in \Delta_n$ 
where
\[ \Delta_n := \left\{ ( z_0, z_1 , \ldots, z_{n+1} ) \in [0,1]^{n+2} : 0 = z_{0} < z_{1} < \cdots < z_{n} < z_{n+1} = 1 \right\}. \]

Consider the process $\cZ := (\cZ_0, \cZ_1, \cZ_2, \ldots)$. At time $\nu_n$,
the law of $\cZ_{n+1}$ is not determined by $\cZ_n$ alone,
since there
may still be active particles in the system.
However,   our first main result (Theorem~\ref{thm:small-lambda})
shows that as 
$\lambda \to 0$, the process $\cZ$ converges
to a Markovian interval-splitting process. 
We next describe the limiting process.

Let $\cB$ denote the Borel subsets of $[0,1]$, and for $n \in \N$ set $[n] := \{1,2,\ldots,n\}$. 
Take a function $r : [0,1] \to \RP$
and a probability measure $\Phi$ on $([0,1], \cB)$.
Assume that $r(\ell) >0$ for all $\ell >0$, and $\Phi (\{0\}) = \Phi(\{1\}) =0$. Then define for each $n \in \ZP := \{ 0 \} \cup \N$ a splitting map $\Gamma_n  : \Delta_n \times [n+1] \times (0,1) \to \Delta_{n+1}$
by
\begin{equation}
    \label{eq:splitting-map}
\bigl( \Gamma_n (z ; j,v) \bigr)_i := \begin{cases} z_i & \text{if } i < j, \\
z_{j-1} + v (z_j - z_{j-1} ) & \text{if } i =j,\\
z_{i-1} &\text{if } i > j,\end{cases} \end{equation}
for $z = (z_0,z_1,\ldots,z_{n+1}) \in \Delta_n$, $j \in [n+1]$, and $v \in (0,1)$. 
We say that the process $\cS := ( \cS_0, \cS_1, \cS_2, \ldots )$, with $\cS_n= (S_{n,0},S_{n,1},\ldots,S_{n,n+1}) \in \Delta_n$ for all $n$,
is an \emph{interval-splitting process with parameters $r$ and $\Phi$},
if, for all $n \in \ZP$, $j \in [n+1]$, and  $B \in \cB$, 
\begin{equation} 
\label{eq:interval-splitting}
\Pr ( \cS_{n+1} \in \Gamma_n (\cS_n ; j , B) \mid \cS_0, \cS_1, \ldots, \cS_n )
= \frac{r( S_{n,j} - S_{n,j-1} )}{\sum_{i \in [n+1]} r( S_{n,i} - S_{n,i-1} )} \Phi (B) , \as \end{equation}
The sequence of kernels~\eqref{eq:interval-splitting} and the initial value $\cS_0 = (0,1) \in \Delta_0$ determine
the finite-dimensional distributions of $\cS$, and hence the law
of $\cS$ as a random element of the product space $\Delta_0 \times \Delta_1 \times \cdots$ with the usual (Borel) product topology.
In words, the transition from $\cS_n$ to $\cS_{n+1}$ is achieved by 
choosing the interval to be split randomly
with probabilities proportional to the function $r$ of each interval length,
and the chosen interval is split into two by choosing
a point in the interval according to the distribution~$\Phi$.
Interval-splitting processes in this generality were
studied by Brennan \& Durrett~\cite{bd1,bd2}. 

Our $\lambda \to 0$ limit
of $\cZ$ turns out to be an interval-splitting process with a particular $r$ 
and $\Phi$. To describe the $\Phi$ that arises in our limit, we need some more notation.
 Define 
\begin{equation}
\label{eq:psi-def}
 \psi (z) := \frac{24}{\pi^4} \sum_{n \text{ odd}} a_n \sin n \pi z , \text{ where }
a_n := \frac{4}{n^4} \tanh \left( \frac{n \pi}{2} \right) - \frac{\pi}{n^3} ;
\end{equation}
where `$n$ odd' means $n \in \{1,3,5,\ldots\}$. Note that $a_1 > 0$, but $a_n < 0$ for $n \geq 3$. 
In~\S\ref{sec:numerics} we will use a representation of $\psi$ involving
a special function related to the \emph{Clausen function} to show that $\psi$ is twice continuously differentiable on $[0,1]$, to give a more rapidly converging series approximation,
and to show that $\psi (z) \sim 3 z^2$ as $z \to 0$, a property that has important consequences
for some of our results, but which is well-hidden in the series representation of~\eqref{eq:psi-def}.
The probabilistic meaning of $\psi$ is as a (defective) density arising from an exit problem
for Brownian motion in a right-angled triangle: see~\S\ref{sec:triangle}. In particular, although not obvious from~\eqref{eq:psi-def}, $\psi (z) >0$ for all $z \in (0,1)$.
Also set 
\begin{equation}
\label{eq:mu-def}
 \mu := \int_0^1 \psi (z) \ud z = \frac{48}{\pi^5} \sum_{n \text{ odd}}  \frac{a_n}{n} = \frac{48}{\pi^4} \sum_{n \text{ odd}}  \frac{\sech^2 \left( \frac{n \pi}{2} \right) }{n^4};
\end{equation}
the first series  follows directly from~\eqref{eq:psi-def}, while the second is established in~\S\ref{sec:numerics}.
The second series representation in~\eqref{eq:mu-def} is useful for numerical evaluation of $\mu$, because
$\sech^2 \left( \frac{n \pi}{2} \right)$ decays exponentially in $n$.
Indeed, taking only the terms $n=1, 3$ in the final sum in~\eqref{eq:mu-def}
suffices to evaluate the first 8 decimal digits of $\mu \approx
0.07826895$ (see~\S\ref{sec:numerics} for a justification). 

Let $\phi_0$ be $\psi$ normalized to be a probability density,
and let $\Phi_0$ be the corresponding probability measure, i.e.,
\begin{equation}
\label{eq:Phi0-def}
\Phi_0 ( B ) := \int_B \phi_0 (z) \ud z := \frac{1}{\mu} \int_B \psi (z) \ud z, \text{ for } B \in \cB .\end{equation}
See Figure~\ref{fig:phi0-density} for an illustration of a numerical approximation to $\phi_0$,
and see \S\ref{sec:numerics} for a discussion of the numerics.
We can now state our first main result.
 
\begin{theorem}
\label{thm:small-lambda}
As $\lambda \to 0$, the process $\cZ$ converges, in the sense
of total-variation convergence of finite-dimensional distributions, to an interval-splitting process
with parameters $r_0$ and $\Phi_0$, where $r_0 (\ell) = \ell^4$ and~$\Phi_0$ is given by~\eqref{eq:Phi0-def}.
\end{theorem}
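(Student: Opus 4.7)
The plan is to prove the finite-dimensional convergence by induction on $n$: it suffices to show that the conditional law of $\cZ_{n+1}$ given $\cZ_0, \ldots, \cZ_n$ converges in total variation, uniformly over $\cZ_n = z \in \Delta_n$, to the one-step splitting kernel in~\eqref{eq:interval-splitting} with parameters $r_0(\ell) = \ell^4$ and $\Phi_0$. Note that the conditional law of $\cZ_{n+1}$ given the past is not in general a function of $\cZ_n$ alone, because residual active particles may survive between nucleations; however, in the sparse regime these residuals are asymptotically negligible, so the process becomes Markovian in the limit. Making this precise, and simultaneously controlling the dependence on the entire configuration, is the role of the quantitative interval-splitting approximation to be developed in~\S\ref{sec:splitting}, which decouples the $n+1$ gaps in the $\lambda \to 0$ regime: the time to a nucleation in a gap of length $\ell$ is of order $\lambda^{-2}\ell^{-4}$, whereas events that couple activity across two gaps are of smaller order in $\lambda$.

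For a fixed gap of length $\ell$, Brownian scaling $(x,t) \mapsto (\ell x, \ell^2 t)$ reduces the problem to a unit gap with total deposition rate $\eps := \lambda \ell^3$ per unit rescaled time. The expected number of simultaneously active particles in this rescaled gap is $O(\eps)$, so to leading order nucleation occurs through exactly two simultaneously active particles, multi-particle events contributing an additional factor of $\eps$. Hence the pair-nucleation rate is of order $\eps^2$ per unit rescaled time, i.e., of order $\lambda^2\ell^4$ per unit real time; this accounts for the splitting weight $r_0(\ell)=\ell^4$, the overall proportionality constant being irrelevant as it cancels in the normalisation of~\eqref{eq:interval-splitting}.

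Conditional on a pair-nucleation, the rescaled location is obtained by viewing the ordered pair of particle positions as a planar Brownian motion in the right-angled triangle $T := \{(x,y) : 0<x<y<1\}$: nucleation corresponds to the pair hitting the hypotenuse $\{x=y\}$ before either leg $\{x=0\}$ or $\{y=1\}$ (these legs corresponding to one particle being absorbed at an island). The starting distribution of the pair in $T$ is determined by the deposition mechanism (the later-arriving particle is uniform in the gap and independent, while the earlier one has the distribution of BM started uniform and conditioned on surviving until the second deposition), and in the $\lambda\to 0$ limit this becomes an explicit $\lambda$-independent measure on $T$. The exit analysis in~\S\ref{sec:triangle}, which extends work of Smith \& Watson, yields the hitting density on the hypotenuse from an arbitrary interior starting point; integrating against the limiting initial distribution produces, up to a multiplicative constant, the defective density $\psi$ of~\eqref{eq:psi-def}, and normalising by $\mu$ gives the probability density $\phi_0$. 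A competing-Poisson-clocks argument across the $n+1$ gaps then yields that the next nucleation occurs in gap $j$ with probability converging to $\ell_j^4 / \sum_i \ell_i^4$, and, conditional on this, the rescaled location of the new island has limit distribution $\Phi_0$, matching~\eqref{eq:interval-splitting}.

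The main obstacle is the uniform control of error terms: one needs total-variation convergence, and uniformity in $z\in\Delta_n$ is delicate when some gap lengths $\ell_j$ become small (so that $\eps = \lambda \ell_j^3$ is not itself the small parameter governing the error in that gap). One has to show that the small-gap contributions to both the nucleation rate ($\propto \ell^4$) and to unwanted coupling events remain negligible, and that the $\lambda$-dependent initial distribution for the 2D Brownian motion in $T$ converges to the fixed distribution that reproduces $\psi$. Both points are addressed through the quantitative splitting-kernel construction of~\S\ref{sec:splitting}; Theorem~\ref{thm:small-lambda} then follows by extracting the leading-order behaviour as $\lambda \to 0$.
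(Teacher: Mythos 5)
Your proposal follows essentially the same route as the paper: induction on $n$ reducing the claim to total-variation convergence of the one-step conditional kernel, a quantitative gap-decoupling estimate (the regeneration-cycle analysis of \S\ref{sec:splitting}) giving the $\ell^4$ weight via the scaling $\nu(\ell,\lambda;\cdot)=\nu(\ell^3\lambda;\cdot)$ and the two-particle leading-order term, and the exit-from-a-right-angled-triangle problem producing $\psi$ and hence $\Phi_0$. The only cosmetic difference is that you phrase the gap selection as a continuous-time competing-clocks race with rates $\lambda^2\ell_j^4$, whereas the paper runs a discrete geometric race over regeneration cycles with per-cycle probabilities $\approx\mu\lambda\ell_j^4$, and it controls the kernel error in expectation (via the maximal gap and the multiple-nucleation event) rather than uniformly in the configuration; these are equivalent for the purpose of the theorem.
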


\begin{figure}
\centering
 \begin{tikzpicture}[domain=-0.5:1.5, scale = 5.5]
\node at (0,-0.1)       {$0$};
\node at (1,-0.1)       {$1$};
\draw[black] (0,0) -- (0,-0.04);
\draw[black,->] (0,0) -- (0,1.2);
\draw[black] (1,0) -- (1,-0.04);
\draw[black] (0,0.6*1.82692513684280) -- (-0.04,0.6*1.82692513684280);
\draw[black] (0,0) -- (-0.04,0);
\node at (-0.14,0.6*1.82692513684280) {$1.827$};
\node at (-0.08,0) {$0$};
\filldraw[draw=black,fill=lightgrey] (0.00,0.00) rectangle ++(0.04,15*0.000988);
\filldraw[draw=black,fill=lightgrey] (0.04,0.00) rectangle ++(0.04,15*0.005063);
\filldraw[draw=black,fill=lightgrey] (0.08,0.00) rectangle ++(0.04,15*0.011386);
\filldraw[draw=black,fill=lightgrey] (0.12,0.00) rectangle ++(0.04,15*0.019214);
\filldraw[draw=black,fill=lightgrey] (0.16,0.00) rectangle ++(0.04,15*0.028017);
\filldraw[draw=black,fill=lightgrey] (0.20,0.00) rectangle ++(0.04,15*0.036535);
\filldraw[draw=black,fill=lightgrey] (0.24,0.00) rectangle ++(0.04,15*0.044926);
\filldraw[draw=black,fill=lightgrey] (0.28,0.00) rectangle ++(0.04,15*0.052998);
\filldraw[draw=black,fill=lightgrey] (0.32,0.00) rectangle ++(0.04,15*0.059303);
\filldraw[draw=black,fill=lightgrey] (0.36,0.00) rectangle ++(0.04,15*0.064680);
\filldraw[draw=black,fill=lightgrey] (0.40,0.00) rectangle ++(0.04,15*0.068925);
\filldraw[draw=black,fill=lightgrey] (0.44,0.00) rectangle ++(0.04,15*0.071568);
\filldraw[draw=black,fill=lightgrey] (0.48,0.00) rectangle ++(0.04,15*0.072405);
\filldraw[draw=black,fill=lightgrey] (0.52,0.00) rectangle ++(0.04,15*0.071454);
\filldraw[draw=black,fill=lightgrey] (0.56,0.00) rectangle ++(0.04,15*0.069106);
\filldraw[draw=black,fill=lightgrey] (0.60,0.00) rectangle ++(0.04,15*0.064989);
\filldraw[draw=black,fill=lightgrey] (0.64,0.00) rectangle ++(0.04,15*0.059573);
\filldraw[draw=black,fill=lightgrey] (0.68,0.00) rectangle ++(0.04,15*0.052954);
\filldraw[draw=black,fill=lightgrey] (0.72,0.00) rectangle ++(0.04,15*0.045077);
\filldraw[draw=black,fill=lightgrey] (0.76,0.00) rectangle ++(0.04,15*0.036793);
\filldraw[draw=black,fill=lightgrey] (0.80,0.00) rectangle ++(0.04,15*0.027521);
\filldraw[draw=black,fill=lightgrey] (0.84,0.00) rectangle ++(0.04,15*0.019060);
\filldraw[draw=black,fill=lightgrey] (0.88,0.00) rectangle ++(0.04,15*0.011410);
\filldraw[draw=black,fill=lightgrey] (0.92,0.00) rectangle ++(0.04,15*0.005045);
\filldraw[draw=black,fill=lightgrey] (0.96,0.00) rectangle ++(0.04,15*0.001010);
\draw[black, line width = 0.30mm]   plot[smooth,domain=0.00001:0.5,samples=50] ({\x},  {(0.6/0.03913447756726942)*(1.62826354332251*\x+1.27323954473516*\x^3*ln(pi*\x)-3.21681489929174*\x^3-(3/2)*\x*(1-\x)+0.104719755119660*\x^5+0.0172257092668332*\x^7+0.00497953421744756*\x^9+0.00192188371777121*\x^11+0.000889522103375420*\x^13+0.000466275098856422*\x^15+0.000267471028154234*\x^17+0.000164244084872429*\x^19+0.000106368812947367*\x^21+0.0000718937510357596*\x^23-0.0408246072903266*sin(pi*\x r)-9.81799068946915*10^(-7)*sin(3*pi*\x r)-2.37634718097513*10^(-10)*sin(5*pi*\x r))});
\draw[black, line width = 0.30mm]   plot[smooth,domain=0.00001:0.5,samples=50] ({1-\x},  {(0.6/0.03913447756726942)*(1.62826354332251*\x+1.27323954473516*\x^3*ln(pi*\x)-3.21681489929174*\x^3-(3/2)*\x*(1-\x)+0.104719755119660*\x^5+0.0172257092668332*\x^7+0.00497953421744756*\x^9+0.00192188371777121*\x^11+0.000889522103375420*\x^13+0.000466275098856422*\x^15+0.000267471028154234*\x^17+0.000164244084872429*\x^19+0.000106368812947367*\x^21+0.0000718937510357596*\x^23-0.0408246072903266*sin(pi*\x r)-9.81799068946915*10^(-7)*sin(3*pi*\x r)-2.37634718097513*10^(-10)*sin(5*pi*\x r))});
\end{tikzpicture}
\caption{The smooth curve is a numerical estimate of the density $\phi_0$
using the approximant $\phi_0^{k,m}$ for $k=9$, $m=5$
(see~\S\ref{sec:numerics} for a definition) which is accurate to within $10^{-10}$ for all $x \in [0,1]$.
The histogram is a simulation estimate for the
distribution of the location of the first nucleation
at $\lambda =0.1$,
based on $10^6$ samples of a discrete version of the model
on the lattice $\{0,\frac{1}{100}, \frac{2}{100}, \ldots, 1\}$,
in which any active particle performs continuous-time simple random walk at rate $100^2$,
and the Poisson deposition rate at each site is $\lambda/100$.
}
\label{fig:phi0-density}
\end{figure}
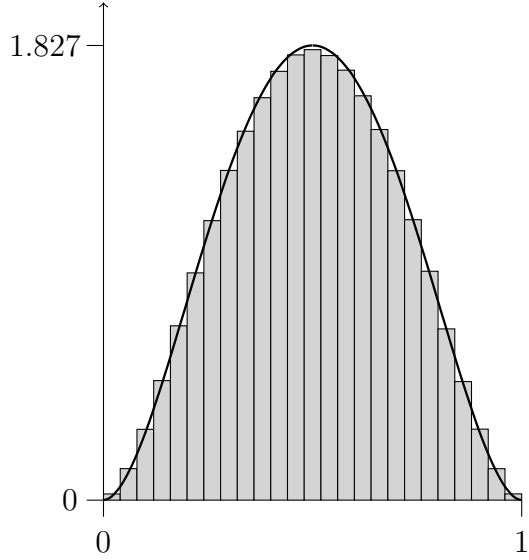

When $\lambda \in (0,\infty)$ is fixed, there is not such a neat description of the interval-splitting process. However,
after a long time, when all intervals become very small, scaling arguments show that diffusion again dominates deposition (we give details below). Roughly speaking, this means that certain large-time statistics of the
fixed-$\lambda$ process can be asymptotically described in terms of the $\lambda \to 0$ limit given in Theorem~\ref{thm:small-lambda}. To state the result, we need to introduce some notation for the statistics that we wish to consider.

Let $(L_{n,1}, L_{n,2}, \ldots, L_{n,n+1} )$ denote the gap 
lengths associated with $\cZ_n$, so if $\cZ_n = (Z_{n,0}, Z_{n,1}, \ldots, Z_{n,n+1} ) \in \Delta_n$, then
\[ L_{n,i} :=  Z_{n,i} - Z_{n,i-1}, \text{ for } 1 \leq i \leq n+1 .\]
For $x \in [0,1]$, denote the number of interior islands in $[0,x]$ after $n$ nucleations by
\[ N_n ( x) := \max \{ i \in \{0,1,\ldots,n\} : Z_{n,i} \leq x \} ;\]
the total number of interior islands is $N_n(1) = I_{\nu_n} = n$. For $U_n$ uniform on $[n+1]$, set
\[ \tilde L_n := \frac{L_{n,U_n}}{\Exp ( L_{n,U_n} )} = (n+1) L_{n,U_n} ,\]
the length of a randomly chosen gap, normalized to have unit mean. Denote the
empirical \emph{gap size distribution}, also normalized, by
\[ \cE_n (x) :=  \frac{1}{n+1} \sum_{i=1}^{n+1} \bbind \left\{  L_{n,i}  \leq \frac{x}{n+1} \right\} , \text{ for } x \in \RP.\]
Here is our main result in the case of fixed $\lambda$. Note that,
for the reasons previously indicated, the limit distributions do not depend on $\lambda$.
For a positive function $g$, we write $f(x) \sim g(x)$ to mean the ratio $f(x)/g(x)$ tends to 1.

\begin{theorem}
\label{thm:gap-statistics}
Let $\lambda \in (0,\infty)$. 
\begin{itemize}
\item[(i)] We have that $\lim_{n \to \infty} \sup_{x \in [0,1]} | n^{-1} N_n (x) - x | = 0$, a.s.
\item[(ii)] There exists a continuous probability density function $g_0$ on $\RP$,
 which can be described in terms of $r_0$ and $\phi_0$
appearing in Theorem~\ref{thm:small-lambda}, such that, for all $x \in \RP$,
\[ \lim_{n \to \infty} \Pr ( \tilde L_n \leq x ) = \int_0^x g_0 (y) \ud y, \text{ and, a.s., }
\lim_{n \to \infty}  \cE_n(x) = \int_0^x g_0 (y) \ud y  . \]
Moreover, there exist constants $c_{g,0}, c_{g,\infty}, \theta \in (0,\infty)$ such that
\begin{align*} g_0 (x)  \sim c_{g,0}\, x^2, \text{ as $x \to 0$, and }
g_0 (x) \sim  \frac{c_{g,\infty}}{x^2} \exp ( - \theta x^4), \text{ as } x \to \infty .\end{align*}
\end{itemize}
\end{theorem}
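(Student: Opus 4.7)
The plan is to reduce the fixed-$\lambda$ dynamics at large $n$ to the Markovian interval-splitting process of Theorem~\ref{thm:small-lambda}, and then to invoke general ergodic theorems for Brennan--Durrett interval-splitting processes from~\S\ref{sec:gap-tails}.

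The key reduction is a Brownian scaling argument. Inside a gap $[a,a+\ell]\subset[0,1]$, rescaling space by $\ell$ and time by $\ell^2$ converts the diffusion--deposition--nucleation dynamics into the original model on $[0,1]$ with an effective deposition intensity of order $\lambda \ell^3$. Since gaps must eventually become small (of order $1/n$ after $n$ nucleations, at least typically), this effective intensity tends to $0$, so the splitting of any given gap asymptotically falls into the sparse-deposition regime of Theorem~\ref{thm:small-lambda}. A quantitative version is the splitting-kernel approximation derived in \S\ref{sec:splitting}: at each nucleation step, the total-variation distance between the true conditional law of the next split and that of an idealised Brennan--Durrett step with parameters $r_0(\ell)=\ell^4$, $\Phi_0$ is bounded by a quantity vanishing as the maximal gap length tends to zero. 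Iterating, I can couple $\cZ$ with an idealised Brennan--Durrett process so that their gap-statistics coincide asymptotically.

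Granted this coupling, part~(i) follows because $\phi_0$ is continuous, strictly positive on $(0,1)$, and symmetric about $1/2$ (the problem being invariant under $x\mapsto 1-x$); standard ergodic arguments for the idealised interval-splitting process then give that the normalised island empirical measure converges uniformly to Lebesgue measure on $[0,1]$, whence $\sup_x|n^{-1}N_n(x)-x|\to 0$ a.s. The almost-sure convergence of $\cE_n$ and the in-distribution convergence of $\tilde L_n$ in part~(ii) likewise follow from the Brennan--Durrett ergodic theorems in~\S\ref{sec:gap-tails} applied to $r_0(\ell)=\ell^4$ and $\Phi_0$, since both satisfy the required regularity (boundedness, continuity, positivity on $(0,1)$); these produce a deterministic limit density $g_0$ on $\RP$ with unit mean, characterised as the stationary solution of an integral equation of the schematic form
\begin{equation*}
 x\,r_0(x)\,g_0(x) \;=\; c\int_{x}^{\infty} r_0(y)\,\phi_0(x/y)\,g_0(y)\,\frac{\ud y}{y},
\end{equation*}
for a normalising constant $c$.

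The tail asymptotics of $g_0$ form the main technical hurdle. For behaviour near zero, I would combine the $\psi(z)\sim 3z^2$ estimate at $z=0$ (established in~\S\ref{sec:numerics}) with $r_0(x)=x^4$ in the integral equation: the right-hand side is then of order $x^6$, yielding $g_0(x)\sim c_{g,0}x^2$ with $c_{g,0}$ expressible as a convergent moment integral against $g_0$. For large $x$, the $x^4$ growth of $r_0$ causes rapid splitting: on the normalised time scale a gap of size $x$ is split at rate of order $x^4$, so the tail of $g_0$ should inherit an $\exp(-\theta x^4)$ factor; a Laplace-type asymptotic analysis of the integral equation, exploiting that $\phi_0$ is bounded and concentrates away from the endpoints, should give the $c_{g,\infty}x^{-2}\exp(-\theta x^4)$ form and identify $\theta$ from the dominant $y\approx x$ contribution. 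The principal obstacle is making this upper tail estimate rigorous and tracking the correct $x^{-2}$ polynomial prefactor; everything else should be routine once the Brennan--Durrett machinery has secured existence, uniqueness, and continuity of $g_0$.
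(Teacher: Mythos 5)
Your overall architecture matches the paper's: approximate the conditional law of each split by the kernel with parameters $r_0(\ell)=\ell^4$ and $\Phi_0$, couple $\cZ$ to a Brennan--Durrett process, and then invoke the general results of \S\ref{sec:gap-tails}. But there is a genuine gap at the coupling step. You argue that because gaps shrink, the per-step total-variation error between $\Prl(\cZ_{n+1}\in\cdot\mid\cF_{\nu_n})$ and the idealised kernel tends to zero, and that ``iterating'' then yields a coupling under which the two processes eventually agree. That inference is false in general: if the step-$n$ error is $\eps_n$ with $\eps_n\to 0$ but $\sum_n\eps_n=\infty$, maximal coupling at each step still fails infinitely often a.s. What is actually needed, and what the paper proves (Proposition~\ref{prop:fixed-lambda-approx}), is that the \emph{sum} over $n$ of the total-variation errors has finite expectation. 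Since Proposition~\ref{prop:splitting-distribution} bounds the error at the $k$th cycle by $C_2\lambda^{1/2}M_{\eta_k}^{3/2}$, this reduces to showing $\Expl\sum_n M_{\nu_n}^{3/2}<\infty$, which is Lemma~\ref{lem:max-vanishes}: a nontrivial Lyapunov argument with $W_t=\sum_i L_{I_t,i}^{\alpha}$, combined with control of cycles containing multiple nucleations (Lemma~\ref{lem:multiple-nucleations}) to pass from the stopping times $\eta_{k_n}$ to the nucleation times $\nu_n$. Your heuristic that gaps are ``of order $1/n$ typically'' does not substitute for this; without summability the coupling argument does not close.

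The second gap concerns the tail asymptotics, which are part of the statement. The integral equation you write for $g_0$ is schematic and never derived; the paper instead characterises $g$ via the Brennan--Durrett distributional fixed-point equations~\eqref{eq:fixed-point} for $(Q,Z)$, with $g(x)=q(x/\rho)/(\rho x)$ for $q$ the density of $Q^{1/\alpha}$. The lower tail $g_0(x)\sim c_{g,0}x^2$ then comes from a bootstrap iteration on the convolution identity~\eqref{eq:f-Z-fixed-point} for $f_Z$ together with $\phi_0(x)\sim(3/\mu)x^2$; the upper tail comes from the moment generating function of $Z$, an Abelian estimate as $t\uparrow 1$, and a monotone-density Tauberian theorem (the monotonicity of $\re^r f_Z(r)$ being the key structural fact), which is how the prefactor $x^{2a-2}=x^{-2}$ with $a=\phi_0(0^+)=0$ is identified. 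You explicitly flag the upper tail as unresolved, so as written the proposal establishes neither tail. If instead you intend simply to cite Theorem~\ref{thm:general-splitting-distribution}(iii) with $\alpha=4$, $\beta=2$, $a=0$, you should say so and verify its hypotheses (boundedness and symmetry of $\phi_0$, and Lemma~\ref{lem:psi-near-zero}), rather than propose a separate, unverified Laplace analysis.
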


We do not have an explicit expression for $g_0$, but  $g_0$ can be characterized in terms of $r_0$ and $\phi_0$ via a distributional fixed-point equation
 derived in~\cite{bd2}: see~\S\ref{sec:gap-tails}. In~\S\ref{sec:gap-tails} we demonstrate, in a 
context of
more general  interval-splitting processes, 
the key properties of $r_0$ and $\phi_0$ that lead to the tail asymptotics for $g_0$ stated here.

\section{Discussion}
\label{sec:discussion}

A Web of Science topic search in May 2020 for ``epitaxy'' produces over 90,000 titles, covering articles in chemistry, physics, materials science,
and so on (for comparison, ``percolation'' produces about half that number). 
While, as far as we are aware, our continuum model does not seem to have been considered before,
 closely related discrete models
have   generated significant interest, and have been
studied both via simulations and various interesting, but not fully rigorous, analytical approaches 
 (see e.g.~\cite{be,bm96,glom,mogl,pe}). 
Our model corresponds to a specific case of the models of submonolayer deposition considered in~\cite{bm96} and elsewhere:
here we focus on one dimension, on binary nucleation, and on regimes where active particles are sparse.

It is natural to seek to extend our model in the following four important ways.
\begin{itemize}
\item[(a)] Take the nucleation threshold to be an integer $\alpha \geq 2$ (our case is $\alpha=2$).
\item[(b)] Allow the deposition rate $\lambda$ to depend on time or on the current number of islands, with $\lambda \to \infty$.
\item[(c)] Consider substrates in higher dimensions, so that, for example, monomers live in $[0,1]^d$, $d \in \N$ (the case $d=2$ being the most physically relevant).
\item[(d)] Permit islands to have spatial extent as an increasing function of the number of monomers that they have captured.
\end{itemize}

In discrete models, as $\alpha$ increases nucleations become 
much rarer, and quantitative differences are 
predicted by existing theory (e.g.~\cite{glom,oglm}).
In the continuum context, due to the 
impossibility of multiple simultaneous Brownian collisions, 
a meaningful model with $\alpha \geq 3$ in one dimension (or  $\alpha \geq 2$ in dimension $d \geq 2$) 
seems to require an addition of an interaction radius $\delta >0$ for particles. Thus addressing~(a) and/or~(c) may simultaneously require dealing with~(d). 

We raise point~(b) because a key feature of the analysis in the present paper is that the density of active particles is low,
 and tends to zero as time goes on. 
On the other hand, much existing work is concerned with regimes in which, at a typical time, there are many active particles in the system,
and the statistics of the system are driven by a `quasiequilibrium' between particle deposition and capture by islands~\cite{bm95b,bm96,gpe}. 
Both regimes are potentially relevant for physical applications~\cite[\S 11.2]{pv}.
While it seems likely that the results of the present paper could be extended to allow $\lambda$ to grow slowly with time,
the methods used here will not fully extend to the case where the average density of active particles remains bounded above zero.
Suitable models with any/all of the features (a)--(d) provide
much scope for probabilistic investigation.

We discuss some specific points of comparison between our results and earlier work. 
 For their model,
Blackman \& Mulheran~\cite[\S V]{bm96} consider
analogues of the parameters~$r_0$ and~$\Phi_0$ in our Theorem~\ref{thm:small-lambda}, and
argue that
\begin{itemize}
\item their analogue of $r_0(\ell)$ scales as $\ell^5$, rather than our $\ell^4$;
\item their analogue of $\Phi_0$ is the Beta$(3,3)$ distribution, which has density  proportional to $z^2 (1-z)^2$ over $z \in [0,1]$; this approaches zero as $z^2$, like our $\phi_0$.
\end{itemize}
O'Neill \emph{et al.}~describe arguments for both $\ell^3$ and $\ell^5$ scaling for the splitting exponent, and report
simulation estimates that fall between the two~\cite{oglm}. As mentioned above, the  arguments in~\cite{bm96,oglm} have many active particles in the system when a nucleation happens, so their results
are not necessarily comparable with ours. 

Statistics of the (normalized) gap distribution, such as studied in our Theorem~\ref{thm:gap-statistics}, have received a lot of attention, along with the closely-related
\emph{capture-zone distributions}, i.e., the sizes of the Voronoi intervals associated with the islands~\cite{pe,bm96,glom,mogl}. 
Stretched Gamma distributions of the form $g(x) \approx x^{\theta_1} \exp( -c x^{\theta_2} )$ have been considered
(sometimes called the \emph{generalized Wigner surmise}~\cite{pe}), but it has since been accepted that such distributions
do not capture simultaneously the $x \to 0$ and $x \to \infty$ asymptotics. For example, 
Blackman \& Mulheran~\cite{bm96} argue that, in the regime they are considering,
 the asymptotic density should look like
\begin{equation}
\label{eq:bm-gaps} g(x) \approx x^2, \text{ as } x \to 0, \text{ and } g( x) \approx \frac{1}{x^2} \exp ( - \theta x^5 ) , \text{ as } x \to \infty .\end{equation}
The predictions of~\eqref{eq:bm-gaps} are reproduced by a fragmentation approximation~\cite{glom}, while an alternative approach based
on distributional fixed-point equations apparently reproduces the asymptotics in~\eqref{eq:bm-gaps} at $0$ but not at $\infty$~\cite[\S III]{mogl}.
The exponent~$5$ in~\eqref{eq:bm-gaps} comes from Blackman \& Mulheran's predicted splitting exponent.
In Theorem~\ref{thm:general-splitting-distribution} we give a general result deriving tail asymptotics for the normalized gap distribution
in general interval-splitting processes, providing a range of asymptotics like~\eqref{eq:bm-gaps}.

\section{Construction, regeneration, and scaling}
\label{sec:construction}

It is convenient to generalize our model so that the substrate is $[0,\ell]$ for $\ell \in (0,\infty)$.
Let $\cC := \cC (\RP , \R)$,
the collection of all continuous functions from~$\RP$ to~$\R$, and let 
$\cC_0 := \{ f \in \cC : f (0) = 0\}$.
Let
$W$
denote the standard Wiener (probability) measure on~$\cC_0$,
so that $W$ is the law of standard Brownian motion on $\R$ started at the origin.

We  build our process from $\cP_{\ell,\lambda}$, a homogeneous Poisson point process of intensity $\lambda > 0$ on $[0,\ell] \times \RP$,
where each Poisson point carries an independent $\cC_0$-valued random mark distributed according to~$W$.
With probability one, all the $\RP$-coordinates of the process are distinct, and then 
we may (and do) list the points of $\cP_{\ell,\lambda}$ in order of increasing $\RP$-coordinate
as~$\Xi_1, \Xi_2, \ldots$ with $\Xi_i = ( \xi_i, s_i, b_i )$, where
$\xi_i \in [0,\ell]$,  
$b_i = (b_i(r), r \in \RP) \in \cC_0$, and
 $0 < s_1 < s_2 < \ldots$. We interpret $s_i$ as the time of deposition of the $i$th particle,
which arrives at location $\xi_i \in [0,\ell]$.
Set
\begin{equation}
\label{eq:x-def}
x_i(r) := \begin{cases} \partial &\text{if } 0 \leq r < s_i , \\
\xi_i + b_i(r-s_i) &\text{if } r \geq s_i , 
\end{cases}
\end{equation}
where $x_i (r) = \partial$ is to be interpreted
as particle~$i$ having not yet arrived by time $r$,
and $x_i(r) \in \R$ is the position of the $i$th particle at time $r \geq s_i$, ignoring interactions.
Let $\bar \R := \R \cup \{ \partial \}$.

Let $\bbI$ be the set of all finite subsets of $[0,\ell]$
(the set of possible island locations), let
$\bbA$ denote the set of all finite (or empty) subsets of $\N$ (possible
labels of active particles),
and let $\bbX := \bar \R^\N$
(locations of the particles, neglecting interactions). 
From the marked Poisson process~$\cP_{\ell,\lambda}$, we will construct
the process $\cY := (\cY_t, t \in \RP)$
where $\cY_t = ( \cI_t, \cA_t, \cX_t)$
with $\cI_t \in \bbI$, $\cA_t \in \bbA$, and $\cX_t := ( x_1(t), x_2(t), \ldots ) \in \bbX$.
The system described informally in~\S\ref{sec:intro}
is captured by $\cI_t$, the locations of the interior islands,
and $( x_j (t), j \in \cA_t)$, the locations of the active
particles.

Here is the algorithm to construct~$\cY$, starting from  $\cI_0 = \cA_0 = \emptyset$, and using $(\cX_t, t \in \RP)$ as defined by~\eqref{eq:x-def}. 
\begin{itemize}
\item[1.]
Suppose we have constructed $\cY_s$, $s \in [0,t]$. Let $i \geq 0$ be such that $s_i \leq t < s_{i+1}$ (where $s_0 := 0$).
At time $t$, let $\cI_t$ be the set of interior
islands, and let $\cA_t$ be the set of indices
of the active particles. 
For $j, k \in \cA_t$, $j < k$, let
\[ T_{j,k} := \inf \{ r \geq t : x_j (r) = x_k (r)  \} ,\] 
and, for $j \in \cA_t$, set
\[ T_j := \inf \{ r \geq t :  x_j (r) \in \cI_t \cup \{ 0, \ell \} \} .\]
Let $a_1 < a_2$ be the (a.s.~unique) indices such that $T_{a_1,a_2} = \min_{j,k:j < k} T_{j,k}$,
and let $a_0$ be the (a.s.~unique) index such that $T_{a_0} = \min_{j} T_j$.
Let $T = \min \{ T_{a_0}, T_{a_1,a_2} \}$.
\item[2.]
If $T > s_{i+1}$ then the next arrival occurs before any nucleation or absorption,
and we set $\cI_s = \cI_t$ for all $s \in (t,s_{i+1}]$,
$\cA_s = \cA_t$ for all $s \in (t,s_{i+1})$,
 and $\cA_{s_{i+1}} = \cA_t \cup \{ i + 1 \}$.
Update $t \mapsto s_{i+1}$ and return to Step~1.
\item[3.]
On the other hand, if $T < s_{i+1}$, we set
$\cI_s = \cI_t$ and $\cA_s = \cA_t$ for all $s \in (t,T)$, and proceed as follows at time $T$.
\begin{itemize}
\item
If $T_{a_1,a_2} < T_{a_0}$, nucleation of particles $a_1, a_2$ occurs at time $T$, and we set
$\cI_T = \cI_{t} \cup \{ x_{a_1} (T) \}$,
and
$\cA_T = \cA_t \setminus \{ a_1, a_2 \}$.
\item
If $T_{a_0} < T_{a_1,a_2}$, 
particle $a_0$ is captured by an existing island at time~$T$, and we set
$\cI_T = \cI_{t}$ and $\cA_T = \cA_t \setminus \{ a_0 \}$.
\end{itemize}
Then update $t \mapsto T$ and return to Step~1.
\end{itemize}

\begin{lemma}
The above construction defines a Markov process $\cY$ for all time.
\end{lemma}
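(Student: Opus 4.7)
The plan is to address three points: (a) each iteration of Steps 1--3 is well defined almost surely; (b) the iterations do not accumulate in finite time, so the construction yields $\cY_t$ for all $t \in \RP$; and (c) the resulting process has the Markov property.

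For (a), I would first use standard facts about the Poisson process~$\cP_{\ell,\lambda}$ to see that, with probability one, the arrival times $0 < s_1 < s_2 < \cdots$ are distinct and have $s_i \to \infty$. At each call of Step~1, the sets $\cI_t$ and $\cA_t$ are finite (they are subsets of $\cI_{s_i} \cup \{x_{i+1}(s_{i+1})\}$ and of $\{1,\ldots,i\}$ respectively), so the minima defining $T_{a_1,a_2}$ and $T_{a_0}$ are taken over finite index sets. For distinct indices $j,k \in \cA_t$, the difference $x_j - x_k$ is (conditionally on~$\cF_t$) a Brownian motion started from a non-zero point, so its first zero after time~$t$ is a.s.\ uniquely attained; likewise $T_j$ is the first time a Brownian motion hits the finite set $\cI_t \cup \{0,\ell\}$, which is a.s.\ uniquely attained (and a.s.\ finite, since one-dimensional Brownian motion exits the bounded interval $[0,\ell]$ in finite time). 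Since the collection of these hitting times is finite and their joint distribution is absolutely continuous, the pair $(a_0)$ and the pair $(a_1,a_2)$ are a.s.\ uniquely defined, justifying the step.

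For (b), I would argue by induction on~$i$ that the algorithm processes the interval $[s_i, s_{i+1}]$ in finitely many applications of Step~3. Each application of Step~3 either removes two active particles (nucleation) and adds one island, or removes one active particle (absorption); thus the cardinality $|\cA_t|$ strictly decreases at each Step~3 event, while new particles enter only via Step~2 at times $s_i$. Since $|\cA_{s_i+}| \leq |\cA_{s_{i-1}}| + 1$, only finitely many Step~3 events can occur before $s_{i+1}$. Hence Step~1 is revisited finitely many times in $[s_i, s_{i+1}]$, and since $s_i \to \infty$, the construction defines $\cY_t$ for all $t \geq 0$ almost surely.

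For (c), I would verify the Markov property by exploiting the strong Markov property of Brownian motion and the memorylessness of the Poisson point process. Fix $t > 0$ and condition on the $\sigma$-field $\cF_t$ generated by $\cY_s$, $s \leq t$, together with the marked Poisson points $\Xi_i$ with $s_i \leq t$ and the paths $(b_i(r), r \leq t-s_i)$. By the strong Markov property applied to each of the finitely many trajectories $(x_j(r), r \geq t)$ for $j \in \cA_t$, the future increments are independent standard Brownian motions started from $(x_j(t))_{j \in \cA_t}$; by the restriction property of Poisson processes, the marked Poisson points in $[0,\ell] \times (t,\infty)$ form an independent copy of $\cP_{\ell,\lambda}$ shifted in time, independent of $\cF_t$. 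The algorithm defining $\cY_{t+\cdot}$ from time~$t$ onward uses only $\cY_t$ and these fresh ingredients, and therefore the conditional law of $(\cY_{t+s})_{s \geq 0}$ given $\cF_t$ coincides with the law of the process started from~$\cY_t$. The main technical obstacle is to formalize this last step cleanly, since the state $\cX_t$ includes coordinates $x_j(t)$ for $j \notin \cA_t$ that play no role in the dynamics; this can be handled by observing that the algorithm only reads the coordinates $(x_j(t))_{j \in \cA_t}$ and the restriction of $\cP_{\ell,\lambda}$ to $[0,\ell] \times (t,\infty)$, so the Markov property in the reduced state $(\cI_t,\cA_t,(x_j(t))_{j \in \cA_t})$ suffices and lifts trivially to~$\cY_t$.
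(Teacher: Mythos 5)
Your proposal is correct and follows essentially the same route as the paper: finiteness of arrivals and of active particles, a.s.\ uniqueness of the next event, and inheritance of the Markov property from the Poisson process and the Brownian motions. The only difference is in how the a.s.\ uniqueness of the minimizing indices is justified: you assert joint absolute continuity of the finitely many hitting times, whereas the paper derives the needed non-coincidences (no three particles meet simultaneously, no two pairs share a first meeting time, etc.) from point-transience of planar Brownian motion applied to differences of the trajectories, which is the cleaner way to make that step rigorous.
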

\begin{proof}
The number
of Poisson arrivals in time interval $[0,t]$ is a.s.~finite,
so the number of active particles at any time 
is a.s.~finite, as is the number of islands.
Given a finite number of active particles and islands at distinct locations,
the independence property of the Poisson process and the Markov property of the Brownian motions
imply that the evolution until the next event (either nucleation, deposition, or adsorption)
is Markovian. The point-transience of planar Brownian motion implies the
following facts about multiple independent one-dimensional Brownian motions with generic starting points:
two Brownian motions
never visit a given point at the same time, 
three Brownian motions never meet simultaneously, and
 two pairs of Brownian motions have two different first meeting times.
Together with the fact that deposition locations a.s.~never coincide with the locations of any currently active particles or islands,
this means that 
 there is a.s.~a well-defined next event, and, up to and including the time of that next event,
 active particles and islands are always at distinct locations.
The above algorithm thus gives a well-defined construction from each event to the next.
Thus the process is well-defined for all time, and inherits the Markov property from the properties of the Poisson process and the Brownian motions.
\end{proof}
 
We denote by $\Prll$ the probability measure associated with the process $\cY$ constructed
above. In the special case $\ell =1$, we write simply $\Prl$. For the corresponding expectations we
use $\Expll$ and $\Expl$.
Let $A_t := | \cA_t |$ denote the number of active particles at time $t$,
and let $I_t := | \cI_t |$ denote the number of interior islands at time $t$.
Initially, $A_0 = I_0 = 0$.
Let $\cF_t := \sigma (\cY_s, 0 \leq s \leq t)$ denote the $\sigma$-algebra generated by the process up to time $t \in \RP$.

Define $\eta_0 := 0$. Also for $k \in \N$ define stopping times
\begin{align}
\label{eq:regeneration-times}
\sigma_k & := \inf \{ t > \eta_{k-1} : A_t = 1 \}, \text{ and } \eta_k := \inf \{ t > \sigma_{k} : A_t = 0 \} .\end{align}
Lemma~\ref{lem:regeneration-times-finite} below shows that 
all these stopping times are finite, a.s.; for $k \in \N$, we call the
time interval $[\sigma_{k},\eta_{k}]$ the $k$th \emph{cycle}. See Figure~\ref{fig:cycle} for an illustration.

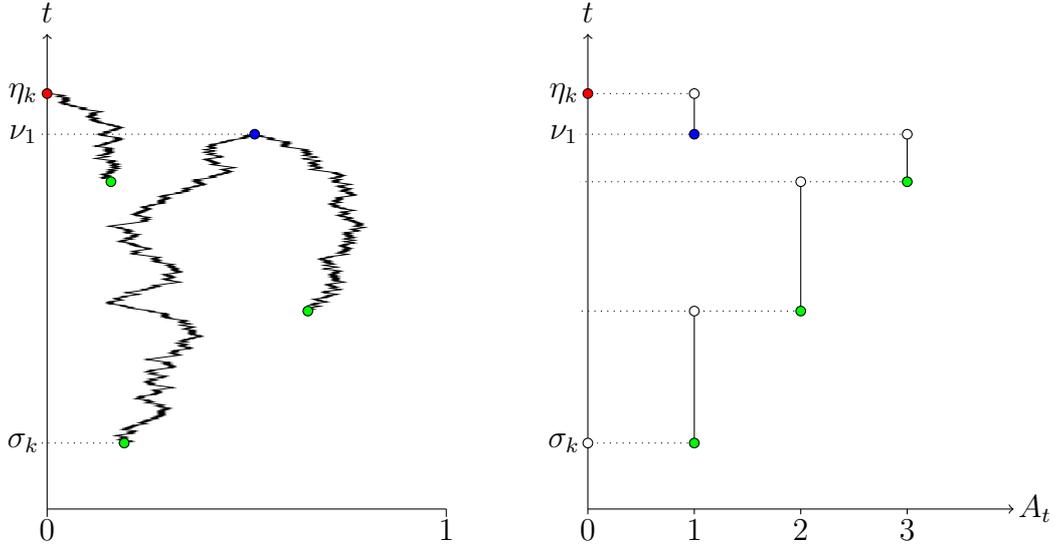
\begin{figure}
\centering
 \begin{tikzpicture}[domain=0:1, scale=0.7]
\draw[black] (0,0) -- (7.5,0);
\draw[black,->] (0,0) -- (0,9);
\node at (0,9.4)       {$t$};
\node at (0,-0.4)       {$0$};
\node at (7.5,-0.4)       {$1$};
\draw[black] (0,0) -- (0,-0.15);
\draw[black] (7.5,0) -- (7.5,-0.15);
\begin{axis}[width=8cm, height=12cm,axis lines=none,
  xmin=0,
  xmax=1,
  ymin=150,
  ymax=1400]
\addplot [mark=none] table [x index=1,y index=0] {
601 0.76
602 0.77
603 0.76
604 0.77
605 0.78
606 0.77
607 0.76
608 0.77
609 0.78
610 0.79
611 0.8
612 0.79
613 0.78
614 0.77
615 0.78
616 0.79
617 0.78
618 0.79
619 0.8
620 0.81
621 0.82
622 0.81
623 0.8
624 0.79
625 0.8
626 0.79
627 0.78
628 0.79
629 0.8
630 0.81
631 0.82
632 0.81
633 0.82
634 0.83
635 0.82
636 0.83
637 0.84
638 0.85
639 0.84
640 0.83
641 0.82
642 0.81
643 0.82
644 0.81
645 0.82
646 0.83
647 0.84
648 0.85
649 0.86
650 0.85
651 0.84
652 0.83
653 0.84
654 0.85
655 0.84
656 0.83
657 0.84
658 0.83
659 0.82
660 0.83
661 0.82
662 0.83
663 0.84
664 0.85
665 0.86
666 0.87
667 0.86
668 0.85
669 0.84
670 0.85
671 0.84
672 0.85
673 0.84
674 0.85
675 0.84
676 0.85
677 0.84
678 0.83
679 0.84
680 0.85
681 0.84
682 0.83
683 0.84
684 0.83
685 0.84
686 0.85
687 0.84
688 0.85
689 0.84
690 0.85
691 0.86
692 0.85
693 0.84
694 0.83
695 0.82
696 0.81
697 0.82
698 0.83
699 0.84
700 0.85
701 0.84
702 0.83
703 0.82
704 0.81
705 0.82
706 0.83
707 0.84
708 0.83
709 0.82
710 0.83
711 0.82
712 0.81
713 0.82
714 0.83
715 0.82
716 0.81
717 0.8
718 0.81
719 0.8
720 0.81
721 0.82
722 0.83
723 0.82
724 0.83
725 0.84
726 0.85
727 0.86
728 0.87
729 0.88
730 0.87
731 0.86
732 0.85
733 0.86
734 0.87
735 0.86
736 0.87
737 0.88
738 0.87
739 0.88
740 0.87
741 0.88
742 0.89
743 0.88
744 0.87
745 0.88
746 0.87
747 0.86
748 0.87
749 0.86
750 0.87
751 0.88
752 0.87
753 0.88
754 0.89
755 0.9
756 0.89
757 0.88
758 0.87
759 0.88
760 0.89
761 0.9
762 0.89
763 0.88
764 0.87
765 0.88
766 0.89
767 0.9
768 0.89
769 0.88
770 0.89
771 0.88
772 0.89
773 0.88
774 0.87
775 0.88
776 0.89
777 0.88
778 0.87
779 0.88
780 0.89
781 0.9
782 0.91
783 0.9
784 0.91
785 0.9
786 0.89
787 0.9
788 0.91
789 0.92
790 0.93
791 0.92
792 0.91
793 0.9
794 0.91
795 0.92
796 0.91
797 0.92
798 0.91
799 0.92
800 0.91
801 0.9
802 0.91
803 0.9
804 0.89
805 0.9
806 0.89
807 0.88
808 0.87
809 0.88
810 0.89
811 0.88
812 0.89
813 0.88
814 0.87
815 0.86
816 0.87
817 0.86
818 0.85
819 0.86
820 0.87
821 0.88
822 0.87
823 0.88
824 0.87
825 0.86
826 0.87
827 0.88
828 0.89
829 0.9
830 0.89
831 0.88
832 0.87
833 0.86
834 0.87
835 0.86
836 0.85
837 0.84
838 0.85
839 0.84
840 0.85
841 0.84
842 0.83
843 0.84
844 0.83
845 0.84
846 0.85
847 0.86
848 0.85
849 0.84
850 0.85
851 0.84
852 0.85
853 0.86
854 0.85
855 0.86
856 0.85
857 0.86
858 0.85
859 0.86
860 0.87
861 0.88
862 0.89
863 0.9
864 0.89
865 0.9
866 0.89
867 0.88
868 0.89
869 0.88
870 0.89
871 0.88
872 0.87
873 0.86
874 0.87
875 0.88
876 0.87
877 0.88
878 0.89
879 0.88
880 0.87
881 0.86
882 0.87
883 0.86
884 0.85
885 0.86
886 0.87
887 0.88
888 0.89
889 0.88
890 0.87
891 0.86
892 0.85
893 0.84
894 0.85
895 0.84
896 0.85
897 0.84
898 0.85
899 0.86
900 0.85
901 0.84
902 0.83
903 0.84
904 0.85
905 0.86
906 0.85
907 0.86
908 0.87
909 0.86
910 0.85
911 0.86
912 0.85
913 0.84
914 0.85
915 0.84
916 0.83
917 0.82
918 0.81
919 0.8
920 0.79
921 0.78
922 0.77
923 0.78
924 0.79
925 0.8
926 0.79
927 0.8
928 0.79
929 0.78
930 0.77
931 0.78
932 0.79
933 0.78
934 0.79
935 0.78
936 0.77
937 0.78
938 0.77
939 0.78
940 0.79
941 0.8
942 0.79
943 0.78
944 0.77
945 0.76
946 0.75
947 0.76
948 0.77
949 0.78
950 0.79
951 0.78
952 0.77
953 0.76
954 0.75
955 0.76
956 0.77
957 0.76
958 0.77
959 0.76
960 0.75
961 0.76
962 0.77
963 0.76
964 0.75
965 0.76
966 0.75
967 0.76
968 0.75
969 0.74
970 0.73
971 0.72
972 0.71
973 0.7
974 0.69
975 0.7
976 0.69
977 0.7
978 0.69
979 0.7
980 0.71
981 0.7
982 0.69
983 0.7
984 0.71
985 0.7
986 0.69
987 0.68
988 0.67
989 0.66
990 0.65
991 0.66
992 0.65
993 0.66
994 0.65
995 0.64
996 0.63
997 0.62
998 0.63
999 0.62
1000 0.61
1001 0.6
};
\addplot [mark=none] table [x index=1,y index=0] {
300 0.23
301 0.22
302 0.23
303 0.24
304 0.25
305 0.24
306 0.23
307 0.22
308 0.21
309 0.22
310 0.23
311 0.22
312 0.23
313 0.22
314 0.23
315 0.22
316 0.21
317 0.2
318 0.21
319 0.22
320 0.23
321 0.24
322 0.25
323 0.24
324 0.25
325 0.24
326 0.25
327 0.26
328 0.25
329 0.24
330 0.23
331 0.22
332 0.23
333 0.24
334 0.23
335 0.24
336 0.25
337 0.24
338 0.25
339 0.26
340 0.25
341 0.26
342 0.27
343 0.28
344 0.29
345 0.28
346 0.27
347 0.28
348 0.29
349 0.28
350 0.29
351 0.3
352 0.29
353 0.3
354 0.31
355 0.32
356 0.31
357 0.32
358 0.31
359 0.32
360 0.33
361 0.32
362 0.33
363 0.34
364 0.33
365 0.34
366 0.33
367 0.34
368 0.33
369 0.34
370 0.33
371 0.34
372 0.33
373 0.34
374 0.33
375 0.34
376 0.35
377 0.34
378 0.35
379 0.36
380 0.35
381 0.34
382 0.33
383 0.34
384 0.33
385 0.32
386 0.33
387 0.34
388 0.33
389 0.32
390 0.31
391 0.3
392 0.29
393 0.3
394 0.31
395 0.3
396 0.29
397 0.3
398 0.29
399 0.28
400 0.27
401 0.26
402 0.27
403 0.28
404 0.27
405 0.28
406 0.27
407 0.28
408 0.29
409 0.3
410 0.31
411 0.32
412 0.31
413 0.32
414 0.33
415 0.32
416 0.33
417 0.32
418 0.33
419 0.32
420 0.31
421 0.32
422 0.33
423 0.34
424 0.35
425 0.36
426 0.37
427 0.36
428 0.35
429 0.34
430 0.33
431 0.32
432 0.31
433 0.3
434 0.31
435 0.3
436 0.31
437 0.3
438 0.29
439 0.3
440 0.29
441 0.3
442 0.31
443 0.32
444 0.31
445 0.3
446 0.31
447 0.32
448 0.33
449 0.34
450 0.33
451 0.34
452 0.35
453 0.34
454 0.33
455 0.32
456 0.31
457 0.3
458 0.31
459 0.32
460 0.31
461 0.3
462 0.29
463 0.3
464 0.31
465 0.3
466 0.31
467 0.32
468 0.33
469 0.34
470 0.35
471 0.36
472 0.37
473 0.36
474 0.37
475 0.36
476 0.37
477 0.36
478 0.35
479 0.36
480 0.35
481 0.34
482 0.35
483 0.36
484 0.35
485 0.34
486 0.33
487 0.32
488 0.31
489 0.3
490 0.29
491 0.3
492 0.31
493 0.32
494 0.33
495 0.34
496 0.35
497 0.36
498 0.37
499 0.38
500 0.39
501 0.38
502 0.37
503 0.38
504 0.37
505 0.36
506 0.37
507 0.38
508 0.37
509 0.38
510 0.37
511 0.38
512 0.39
513 0.4
514 0.41
515 0.42
516 0.41
517 0.42
518 0.41
519 0.4
520 0.39
521 0.4
522 0.39
523 0.4
524 0.41
525 0.4
526 0.39
527 0.38
528 0.39
529 0.4
530 0.41
531 0.4
532 0.39
533 0.4
534 0.41
535 0.4
536 0.41
537 0.42
538 0.43
539 0.42
540 0.43
541 0.44
542 0.45
543 0.44
544 0.45
545 0.44
546 0.43
547 0.42
548 0.43
549 0.42
550 0.43
551 0.42
552 0.43
553 0.42
554 0.43
555 0.42
556 0.43
557 0.42
558 0.41
559 0.4
560 0.41
561 0.42
562 0.41
563 0.42
564 0.43
565 0.42
566 0.41
567 0.4
568 0.39
569 0.4
570 0.41
571 0.42
572 0.41
573 0.4
574 0.39
575 0.4
576 0.39
577 0.4
578 0.41
579 0.4
580 0.39
581 0.38
582 0.37
583 0.36
584 0.35
585 0.36
586 0.35
587 0.34
588 0.35
589 0.34
590 0.35
591 0.34
592 0.35
593 0.34
594 0.35
595 0.34
596 0.33
597 0.32
598 0.31
599 0.3
600 0.31
601 0.3
602 0.29
603 0.28
604 0.27
605 0.26
606 0.25
607 0.24
608 0.25
609 0.24
610 0.23
611 0.22
612 0.23
613 0.22
614 0.21
615 0.2
616 0.19
617 0.18
618 0.19
619 0.18
620 0.19
621 0.18
622 0.19
623 0.2
624 0.21
625 0.22
626 0.21
627 0.2
628 0.21
629 0.2
630 0.21
631 0.22
632 0.23
633 0.22
634 0.23
635 0.24
636 0.25
637 0.26
638 0.27
639 0.26
640 0.25
641 0.24
642 0.23
643 0.24
644 0.25
645 0.26
646 0.27
647 0.28
648 0.29
649 0.28
650 0.29
651 0.3
652 0.31
653 0.3
654 0.31
655 0.32
656 0.33
657 0.32
658 0.31
659 0.32
660 0.33
661 0.34
662 0.35
663 0.36
664 0.37
665 0.38
666 0.39
667 0.38
668 0.37
669 0.36
670 0.35
671 0.36
672 0.35
673 0.34
674 0.35
675 0.36
676 0.37
677 0.36
678 0.37
679 0.36
680 0.35
681 0.36
682 0.35
683 0.34
684 0.35
685 0.36
686 0.37
687 0.38
688 0.39
689 0.38
690 0.39
691 0.38
692 0.37
693 0.38
694 0.37
695 0.38
696 0.37
697 0.38
698 0.37
699 0.36
700 0.37
701 0.36
702 0.37
703 0.36
704 0.35
705 0.36
706 0.37
707 0.36
708 0.35
709 0.34
710 0.33
711 0.32
712 0.33
713 0.32
714 0.31
715 0.32
716 0.33
717 0.32
718 0.31
719 0.32
720 0.31
721 0.32
722 0.33
723 0.32
724 0.31
725 0.3
726 0.29
727 0.28
728 0.27
729 0.26
730 0.25
731 0.26
732 0.25
733 0.26
734 0.27
735 0.26
736 0.25
737 0.26
738 0.27
739 0.28
740 0.29
741 0.3
742 0.29
743 0.3
744 0.29
745 0.28
746 0.27
747 0.26
748 0.27
749 0.26
750 0.25
751 0.26
752 0.25
753 0.24
754 0.25
755 0.24
756 0.23
757 0.22
758 0.21
759 0.22
760 0.23
761 0.22
762 0.23
763 0.24
764 0.25
765 0.24
766 0.23
767 0.24
768 0.25
769 0.26
770 0.25
771 0.26
772 0.25
773 0.26
774 0.27
775 0.26
776 0.25
777 0.24
778 0.23
779 0.24
780 0.23
781 0.24
782 0.25
783 0.24
784 0.23
785 0.24
786 0.23
787 0.22
788 0.21
789 0.22
790 0.21
791 0.2
792 0.19
793 0.18
794 0.19
795 0.2
796 0.21
797 0.22
798 0.23
799 0.24
800 0.25
801 0.26
802 0.27
803 0.28
804 0.29
805 0.28
806 0.29
807 0.28
808 0.29
809 0.3
810 0.29
811 0.28
812 0.29
813 0.3
814 0.29
815 0.28
816 0.27
817 0.26
818 0.27
819 0.26
820 0.25
821 0.26
822 0.27
823 0.26
824 0.27
825 0.28
826 0.27
827 0.28
828 0.29
829 0.28
830 0.29
831 0.3
832 0.29
833 0.3
834 0.29
835 0.28
836 0.29
837 0.28
838 0.29
839 0.3
840 0.31
841 0.3
842 0.31
843 0.32
844 0.33
845 0.34
846 0.33
847 0.34
848 0.33
849 0.34
850 0.33
851 0.34
852 0.33
853 0.34
854 0.35
855 0.34
856 0.35
857 0.36
858 0.35
859 0.34
860 0.33
861 0.34
862 0.35
863 0.34
864 0.35
865 0.36
866 0.37
867 0.38
868 0.39
869 0.38
870 0.39
871 0.4
872 0.41
873 0.4
874 0.41
875 0.4
876 0.41
877 0.42
878 0.41
879 0.4
880 0.41
881 0.42
882 0.43
883 0.44
884 0.43
885 0.44
886 0.45
887 0.46
888 0.45
889 0.46
890 0.45
891 0.44
892 0.45
893 0.44
894 0.43
895 0.44
896 0.45
897 0.44
898 0.43
899 0.44
900 0.45
901 0.46
902 0.47
903 0.48
904 0.49
905 0.48
906 0.49
907 0.48
908 0.49
909 0.5
910 0.51
911 0.5
912 0.51
913 0.52
914 0.53
915 0.52
916 0.53
917 0.54
918 0.53
919 0.52
920 0.53
921 0.52
922 0.51
923 0.52
924 0.53
925 0.54
926 0.55
927 0.54
928 0.53
929 0.52
930 0.51
931 0.5
932 0.49
933 0.48
934 0.49
935 0.5
936 0.49
937 0.5
938 0.49
939 0.48
940 0.49
941 0.48
942 0.47
943 0.48
944 0.47
945 0.46
946 0.47
947 0.48
948 0.49
949 0.5
950 0.51
951 0.5
952 0.51
953 0.5
954 0.49
955 0.48
956 0.47
957 0.48
958 0.49
959 0.48
960 0.47
961 0.46
962 0.47
963 0.48
964 0.49
965 0.48
966 0.47
967 0.46
968 0.47
969 0.48
970 0.49
971 0.48
972 0.49
973 0.48
974 0.47
975 0.48
976 0.49
977 0.48
978 0.49
979 0.48
980 0.49
981 0.5
982 0.51
983 0.52
984 0.51
985 0.52
986 0.53
987 0.54
988 0.53
989 0.54
990 0.55
991 0.56
992 0.57
993 0.58
994 0.57
995 0.58
996 0.59
997 0.58
998 0.59
999 0.58
1000 0.59
1001 0.6
};
\addplot [mark=none] table [x index=1,y index=0] {
901 0.16
902 0.17
903 0.16
904 0.17
905 0.18
906 0.17
907 0.18
908 0.17
909 0.18
910 0.17
911 0.16
912 0.17
913 0.18
914 0.19
915 0.18
916 0.19
917 0.18
918 0.19
919 0.2
920 0.19
921 0.18
922 0.19
923 0.2
924 0.21
925 0.2
926 0.19
927 0.2
928 0.21
929 0.2
930 0.19
931 0.2
932 0.19
933 0.2
934 0.21
935 0.2
936 0.19
937 0.18
938 0.19
939 0.18
940 0.19
941 0.2
942 0.19
943 0.18
944 0.17
945 0.16
946 0.15
947 0.14
948 0.15
949 0.16
950 0.17
951 0.18
952 0.17
953 0.16
954 0.15
955 0.16
956 0.17
957 0.18
958 0.17
959 0.18
960 0.19
961 0.18
962 0.19
963 0.18
964 0.17
965 0.18
966 0.19
967 0.18
968 0.17
969 0.18
970 0.19
971 0.18
972 0.17
973 0.16
974 0.15
975 0.14
976 0.15
977 0.16
978 0.17
979 0.16
980 0.17
981 0.18
982 0.17
983 0.18
984 0.19
985 0.2
986 0.19
987 0.2
988 0.21
989 0.2
990 0.21
991 0.22
992 0.21
993 0.2
994 0.21
995 0.2
996 0.19
997 0.18
998 0.17
999 0.16
1000 0.17
1001 0.16
1002 0.15
1003 0.16
1004 0.17
1005 0.18
1006 0.17
1007 0.18
1008 0.17
1009 0.18
1010 0.19
1011 0.2
1012 0.19
1013 0.2
1014 0.21
1015 0.2
1016 0.21
1017 0.22
1018 0.21
1019 0.2
1020 0.21
1021 0.2
1022 0.19
1023 0.18
1024 0.19
1025 0.18
1026 0.17
1027 0.16
1028 0.15
1029 0.16
1030 0.15
1031 0.14
1032 0.15
1033 0.14
1034 0.13
1035 0.14
1036 0.13
1037 0.14
1038 0.15
1039 0.14
1040 0.13
1041 0.12
1042 0.13
1043 0.12
1044 0.13
1045 0.14
1046 0.13
1047 0.14
1048 0.13
1049 0.14
1050 0.13
1051 0.14
1052 0.15
1053 0.16
1054 0.15
1055 0.14
1056 0.13
1057 0.12
1058 0.11
1059 0.1
1060 0.11
1061 0.1
1062 0.11
1063 0.1
1064 0.09
1065 0.08
1066 0.07
1067 0.08
1068 0.07
1069 0.06
1070 0.05
1071 0.06
1072 0.05
1073 0.04
1074 0.05
1075 0.04
1076 0.05
1077 0.06
1078 0.05
1079 0.06
1080 0.05
1081 0.06
1082 0.05
1083 0.06
1084 0.05
1085 0.06
1086 0.05
1087 0.04
1088 0.03
1089 0.04
1090 0.05
1091 0.04
1092 0.03
1093 0.02
1094 0.01
1095 0.00
};
\end{axis}
\draw[dotted] (-0.1,7.1) -- (3.9,7.1);
\draw[dotted] (-0.1,1.25) -- (1.45,1.25);
\draw[black,fill=green] (1.45,1.25) circle (.5ex);
\draw[black,fill=green] (4.9,3.75) circle (.5ex);
\draw[black,fill=green] (1.2,6.2) circle (.5ex);
\draw[black,fill=blue] (3.9,7.1) circle (.5ex);
\draw[black,fill=red] (0,7.87) circle (.5ex);
\node at (-0.45,1.25) {$\sigma_k$};
\node at (-0.45,7.1) {$\nu_1$};
\node at (-0.45,7.87) {$\eta_k$};
\end{tikzpicture}
\qquad
 \begin{tikzpicture}[domain=0:1, scale=0.7]
\draw[black,->] (0,-0.1,0) -- (0,9);
\draw[black,->] (0,0) -- (8,0);
\node at (-0.45,1.25)       {$\sigma_k$};
\node at (-0.45,7.1)       {$\nu_1$};
\node at (-0.45,7.87)       {$\eta_k$};
\node at (0,9.4)       {$t$};
\node at (8.4,0)       {$A_t$};
\draw (6,-0.1) -- (6,0);
\draw (4,-0.1) -- (4,0);
\draw (2,-0.1) -- (2,0);
\draw (2,1.25) -- (2,3.75);
\draw (4,3.75) -- (4,6.2);
\draw (6,6.2) -- (6,7.1);
\draw (2,7.1) -- (2,7.87);
\node at (0,-0.4)       {$0$};
\node at (2,-0.4)       {$1$};
\node at (4,-0.4)       {$2$};
\node at (6,-0.4)       {$3$};
\draw[dotted] (2,1.25) -- (-0.15,1.25);
\draw[dotted] (4,3.75) -- (-0.15,3.75);
\draw[dotted] (6,6.2) -- (-0.15,6.2);
\draw[dotted] (6,7.1) -- (-0.15,7.1);
\draw[dotted] (2,7.87) -- (-0.15,7.87);
\draw[black,fill=white] (0,1.25) circle (.5ex);
\draw[black,fill=green] (2,1.25) circle (.5ex);
\draw[black,fill=white] (2,3.75) circle (.5ex);
\draw[black,fill=green] (4,3.75) circle (.5ex);
\draw[black,fill=white] (4,6.2) circle (.5ex);
\draw[black,fill=green] (6,6.2) circle (.5ex);
\draw[black,fill=white] (6,7.1) circle (.5ex);
\draw[black,fill=blue] (2,7.1) circle (.5ex);
\draw[black,fill=white] (2,7.87) circle (.5ex);
\draw[black,fill=red] (0,7.87) circle (.5ex);
\end{tikzpicture}
\caption{A cycle $[\sigma_k, \eta_k]$ which starts at time $\sigma_k$
with deposition of a particle into $[0,1]$, having no interior islands. The cycle contains two subsequent depositions, the first nucleation (at time $\nu_1$), and a capture of an active particle by an existing island.}
\label{fig:cycle}
\end{figure}

By definition,
$A_t = 0$ for $t \in [\eta_{k-1}, \sigma_{k})$,
so nucleation can only occur during the cycles $[\sigma_k,\eta_k]$.
Up until the first nucleation, the cycles $[\sigma_{k},\eta_{k}]$
encode a \emph{regeneration} structure that we will exploit.
Let $M_t := \max_{1 \leq i \leq I_t+1} L_{I_t,i}$, the length of the
longest gap at time~$t$. The next lemma is somewhat technical, but important: it gives a tail bound for the duration
of a cycle. The intuition is that
active particles are captured rather rapidly by existing islands, and faster still
if the gaps between islands are small.

\begin{lemma}
\phantomsection
\label{lem:regeneration-times-finite}
\begin{itemize}
\item[(i)] For any $\lambda >0$, we have that $\eta_k, \sigma_k < \infty$ for all $k \in \N$, $\Prl$-a.s.
\item[(ii)] For all $\lambda_0 \in (0,\infty)$ there exist constants $\delta = \delta(\lambda_0) >0$ and $C_1 = C_1 (\lambda_0) < \infty$ such that,
for all $\lambda \in (0,\lambda_0]$, for all $k \in \N$ and all $t \in \RP$,
\begin{equation}
\label{eq:cycle-tails}
 \Prl \bigl( \eta_k - \sigma_k \geq t  \bigmid \cF_{\sigma_k} \bigr) \leq C_1 \exp ( - \delta M_{\sigma_k}^{-1} t^{1/2}  ), \text{ $\Prl$-a.s.}
 \end{equation}
\end{itemize}
\end{lemma}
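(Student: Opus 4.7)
Part (i) follows from Part (ii) by induction on $k$. We have $\eta_0 = 0$, and assuming $\eta_{k-1} < \infty$ a.s., the waiting time $\sigma_k - \eta_{k-1}$ for the next deposition is exponential with rate $\lambda$, hence a.s.\ finite, so $\sigma_k < \infty$ a.s.; the bound in~\eqref{eq:cycle-tails}, together with $M_{\sigma_k} \leq 1$, then gives $\eta_k - \sigma_k < \infty$ a.s., closing the induction.

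For Part (ii) I would condition on $\cF_{\sigma_k}$ and, by the strong Markov property of the underlying marked Poisson process, reset time so that $\sigma_k = 0$; write $M := M_{\sigma_k}$ (now $\cF_0$-measurable) and $T^{*} := \eta_k - \sigma_k$. The driving analytic input is the standard one-dimensional Brownian exit-time tail: for Brownian motion started at an interior point of $[0,L]$ with absorbing boundary $\{0,L\}$, the first-exit time $T$ satisfies $\Pr(T > s) \leq C_0 \exp(-c_0 s/L^2)$ for universal constants $c_0, C_0 > 0$. Because islands can only be added (never removed) during a cycle, the maximum inter-island gap stays at most $M$ throughout $[0, T^{*}]$; hence at every instant, each active particle sits in a sub-interval of width $\leq M$, bounded by neighbouring islands and/or other active particles, all of which are absorbing for it. Consequently, the residual lifetime of any active particle is stochastically dominated by a random variable with tail $C_0 \exp(-c_0 s/M^2)$, independently of the rest of the history at that instant.

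The plan is to convert this into a tail estimate on $T^{*}$ by coupling the true cycle to a non-interacting dominating system---an M/G/$\infty$-type queue with Poisson arrivals at rate $\lambda$ and i.i.d.\ ``service times'' distributed as the dominating lifetime above, initialised with a single customer at time $0$---and then applying a blocking argument to its busy period. The coupling is safe because the real-process removals (nucleation, absorption) are strictly faster than letting each particle expire independently, so $T^{*}$ is bounded above by the first idle time of the dominating queue. I would partition $[0,t]$ into consecutive blocks of length $s_0 \asymp M^2$ and show that in each block the probability of a \emph{clean} event (no deposition during the block, and every particle present at the block's start has died by its end) is bounded below by some $p_0 = p_0(\lambda_0) > 0$. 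A single clean block ends the cycle, so iterating over the $\asymp t/M^2$ blocks yields the preliminary bound $\Pr(T^{*} > t) \leq C \exp(-c t/M^2)$. The form~\eqref{eq:cycle-tails} then follows because $t/M^2 \geq M^{-1} t^{1/2}$ whenever $t \geq M^2$, while for $t \leq M^2$ the trivial bound $\Pr(T^{*} > t) \leq 1$ is accommodated by taking $C_1$ large enough.

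The main difficulty is controlling the clean-block probability $p_0$ uniformly in $\lambda \in (0,\lambda_0]$ and in the initial configuration at $\sigma_k$, particularly when $\lambda_0 M^2$ is not small so that a typical block of length $\asymp M^2$ contains several depositions. In that regime one must use the fact that $k \geq 2$ active particles in a sub-interval of width $M$ collide pairwise at rate of order $k(k-1)/M^2$, so that crowded cycles are self-limiting via rapid nucleations rather than merely via single-particle absorption; the coupling to the non-interacting dominating system remains valid (nucleation can only help the real process), but making the block estimate quantitative, with constants depending only on $\lambda_0$, is where the bulk of the technical work sits.
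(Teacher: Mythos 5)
Your reduction of (i) to (ii), the rescaling by $M_{\sigma_k}^2$, the single-particle elimination estimate via the Brownian exit time from a gap of width at most $M$ (using that gaps only shrink during a cycle), and the iteration over blocks of length $\asymp M^2$ all match the skeleton of the paper's argument. But there is a genuine gap at the step you yourself flag and then defer: the ``clean block'' probability is \emph{not} bounded below by a constant $p_0(\lambda_0)>0$ uniformly over blocks, because the number $N$ of active particles present at the start of a block is unbounded, and the probability that all of them expire within the block is only of order $(1-\eps)^N$. Worse, conditioning on the cycle not yet having ended biases the law of $N$ towards large values, so you cannot simply invoke the unconditional Poisson-type bound on the queue length. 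Without a uniform lower bound on the per-block emptying probability, the geometric iteration does not close, and the claimed intermediate estimate $\Prl(T^*>t)\leq C\exp(-ct/M^2)$ --- which is strictly \emph{stronger} than what the lemma asserts and stronger than what the paper proves --- is left unsupported. The fix you gesture at (pairwise collisions at rate $\asymp k(k-1)/M^2$ making crowded states self-limiting) is not developed into an estimate, and it is not the mechanism actually needed: the paper's proof explicitly \emph{ignores} nucleations in its drift bound.

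The paper resolves exactly this difficulty differently. It proves a Foster--Lyapunov bound $\Expl(\re^{\delta(\tA_{t+1}-\tA_t)}\mid\tcF_t)\leq\exp(\lambda_0(\re^\delta-1)-q\tA_t(1-\re^{-\delta}))$ for the rescaled particle count, deduces negative exponential drift above a level $a_0=\lceil 6\lambda_0/q\rceil$, applies Hajek's theorem to get exponential moments for the durations of excursions above $a_0$, and only uses the emptying estimate (your ``clean block'', with probability $q^{a_0}\re^{-\lambda_0}$) on blocks where $\tA\leq a_0$. Assembling the excursion counts and durations costs a square: the final bound is $\Prl(\eta_k-\sigma_k>cM^2t^2\mid\cF_{\sigma_k})\leq\re^{-\eps t}$, which is precisely why the lemma's exponent is $M^{-1}t^{1/2}$ rather than $t/M^2$. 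Your conversion step ($t/M^2\geq M^{-1}t^{1/2}$ for $t\geq M^2$, trivial bound otherwise) is fine, so if you could genuinely control the crowded-state excursions well enough to get linear-in-$t$ decay you would recover the lemma; but that control is the entire content of the proof, and as written your proposal does not supply it.
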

\begin{proof}
For $\lambda >0$, it is easy to see that $\eta_k < \infty$ implies $\sigma_{k+1} < \infty$, $\Prl$-a.s.;
since $\eta_0 = 0$ this means $\sigma_1 < \infty$.
Thus to show that $\eta_k, \sigma_k < \infty$ for all $k$,
it suffices to fix $k \in \N$ and to establish~\eqref{eq:cycle-tails} supposing that $\sigma_k < \infty$, a.s.
This is how we proceed.

Let $\lambda_0 \in (0,\infty)$ and $\lambda \in (0,\lambda_0]$. Write $M := M_{\sigma_k} \in (0,1]$. 
For $t\in \RP$, set 
\[ \tcI_t := \cI_{\sigma_k + M^2 t}, ~\tcA_t := \cA_{\sigma_k + M^2 t}, ~ \tA_t := A_{\sigma_k+M^2 t}, ~ \tcF_t := \cF_{\sigma_k+M^2t}, \]
and $\tx_j (t) := x_j (\sigma_k + M^2 t)$. In the rest of this proof, when we refer to `time' we mean the value of $t$ in the index $\sigma_k+M^2t$.
Let $B_t(j)$ be the event that $\tx_j(s)$, $j \in \tcA_t$, hits $\tcI_t$ at some time $s \in [t,t+1]$.
If $B_t(j)$ occurs, then particle $j$ is no longer active at time $t+1$, because either
it has been captured by an existing island, or it has collided with another active particle in the meantime. Also,
$| \tcA_{t+1} \setminus \tcA_t |$,
the number of new active particles at time $t+1$ compared to time~$t$, is bounded
by the number $\tZ_t$ of Poisson arrivals in time interval $[t,t+1]$. 
Thus
\[ \tA_{t+1} - \tA_t \leq \tZ_t - \sum_{j \in \tcA_t} \2 { B_t(j) } ,\]
and, by construction, $\tZ_t$ and the $B_t(j)$ are conditionally independent, given $\tcF_t$.
(This bound ignores nucleations, which can also eliminate active particles.) 
For $\delta >0$,
\[ \Expl \bigl( \re^{\delta(\tA_{t+1} - \tA_t) } \bigmid \tcF_t \bigr)
\leq \Expl \bigl( \re^{\delta \tZ_t} \bigmid \tcF_t \bigr) \prod_{j \in \tcA_t} \Expl \bigl( \re^{-\delta \2 { B_t(j) } } \bigmid \tcF_t \bigr) .\]
Given $\tcF_t$, $\tZ_t$ is Poisson with mean $\lambda M^2$, so 
 $\Expl ( \re^{\delta \tZ_t} \mid \tcF_t ) = \exp ( \lambda M^2 (\re^\delta-1) )$, while
\[ \Expl \bigl( \re^{-\delta \2 { B_t(j) } } \bigmid \tcF_t \bigr)
= 1   - (1- \re^{-\delta})\Prl ( B_t (j) \mid \tcF_t ) .\]
We claim that there is a constant $q >0$ such that
\begin{equation}
\label{eq:non-survival}
\Prl ( B_t (j) \mid \tcF_t ) \geq q , \as, \text{ for all $t \in \RP$ and all $j \in \tcA_t$}.
\end{equation} 
Indeed,
if $(w_t, t \geq 0)$ is standard Brownian motion on $\R$ with $w_0 = 0$,
then the claim~\eqref{eq:non-survival} holds with
$q = \Pr ( \sup_{0 \leq t \leq 1 } w_t \geq 1 ) = \Pr (\sup_{0 \leq t \leq M^2} w_t \geq M)$,
since $\tx_j(t)$ has at least one island within distance $M$ at time~$t$. 
By the reflection principle for Brownian motion~\cite[p.~45]{MoerPer},
$q = 2 \Pr ( w_1 \geq 1) \approx 0.317$.

Using~\eqref{eq:non-survival}, since $1 - z \leq \re^{-z}$ and $\lambda M^2 \leq \lambda_0$, we get
\begin{equation}
\label{eq:exponential-bound1}
 \Expl \bigl( \re^{\delta(\tA_{t+1} - \tA_t)} \bigmid \tcF_t \bigr)
\leq \exp \bigl( \lambda_0  (\re^\delta-1) - q \tA_t (1- \re^{-\delta}) \bigr) .\end{equation}
There is an absolute constant $\delta_0$ such that
$\re^\delta-1 \leq 2\delta$ and $1- \re^{-\delta} \geq \delta/2$
for all $\delta \in [0,\delta_0]$. Fix $\delta \in [0,\delta_0]$, and
let $a_0 := \lceil 6 \lambda_0 /q \rceil$, so $a_0 \in \N$. If $\tA_t \geq a_0$, then
\[ \lambda_0 (\re^\delta-1) - q \tA_t (1- \re^{-\delta}) \leq  2 \delta \lambda_0 - \frac{q a_0 \delta}{2} \leq - \delta \lambda_0 .\]
Thus we obtain from~\eqref{eq:exponential-bound1} that
\begin{align}
\label{eq:exponential-bound2} 
 \Expl \bigl( \re^{\delta(\tA_{t+1} - \tA_t)} \bigmid \tcF_t \bigr) & \leq \exp \left( - \delta \lambda_0 \right), \text{ on } \{ \tA_t \geq a_0 \}; \\
\label{eq:exponential-bound3} 
 \Expl \bigl( \re^{\delta(\tA_{t+1} - \tA_t)} \bigmid \tcF_t \bigr) & \leq \exp (2 \delta \lambda_0), \text{ on } \{ \tA_t < a_0\}. \end{align}

Set $\tau_0 := 0$ and define, for $r \in \N$, the stopping times
\begin{align*}
\gamma_r  := \inf \{ t \in \RP : t > \tau_{r-1} +1 , \, \tA_{t} \geq a_0 \}, \text{ and } \tau_r := \inf \{ t \in \RP : t > \gamma_r, \, \tA_t < a_0 \} .\end{align*}
Also define $\tau'_r  := \min \{  \gamma_r + n : n \in \N , \, \tA_{\gamma_r+n} < a_0 \}$. Then, a.s., $\tA_{\tau_0} = A_{\sigma_k} = 1$
and $\tau_r \leq \tau'_r$; also, $\tA_{\gamma_r}$ is bounded above by $a_0$ plus a Poisson random variable with mean~$\lambda_0$.
The Foster--Lyapunov drift bounds~\eqref{eq:exponential-bound2} and~\eqref{eq:exponential-bound3}
show that we may apply Theorem~2.3 of~\cite{hajek} to the discrete-time process $\tA_{\gamma_r}, \tA_{\gamma_r+1}, \ldots$ 
and its stopping time $\tau'_r - \gamma_r$ to show that, 
for some constants $\theta >0$ and $C < \infty$, depending only on $\lambda_0$,
\begin{equation}
\label{eq:hajek-bound}
\Expl \bigl( \re^{\theta  (\tau_r - \gamma_r)} \bigmid  \tcF_{\gamma_r} \bigr) \leq \Expl \bigl( \re^{\theta  (\tau'_r - \gamma_r)} \bigmid  \tcF_{\gamma_r} \bigr) \leq C , \as, \text{ for all } r \in \N .
\end{equation}

Next, with $\tZ_t$ again the number of depositions during time interval $[t,t+1]$,
we have that a sufficient condition for $\tA_{t+1} =0$
is that $\tZ_t = 0$ (no new arrivals) and all the active particles at time $t$
become inactive before time $t+1$; hence, by~\eqref{eq:non-survival},
\begin{align}
\label{eq:dies-out}
 \Prl ( \tA_{t+1} = 0 \mid \tcF_t ) 
\geq 
\Prl \biggl( \bigl\{ \tZ_t = 0 \bigr\} \cap \bigcap_{j \in \tcA_t} B_t (j) \biggmid \tcF_t \biggr)  
 \geq
q^{a_0} \re^{-\lambda_0}, \text{ on } \{ \tA_t \leq a_0 \} .\end{align}
Define for $r \in \N$ and $m \in \N$ the event 
\[ E(r,m) = \{ M^{-2} ( \eta_k - \sigma_k) > \tau_{r-1} + m, \, \gamma_r - \tau_{r-1} > m \} \in \tcF_{\tau_{r-1}+m}. \]
Then, since $E(r,m+1) \subseteq E(r,m)$,
\begin{align*}
 \Prl ( E(r,m+1) \mid \tcF_{\tau_{r-1}} )  
  = \Expl \Bigl[ \Prl  ( E(r,m+1) \mid \tcF_{\tau_{r-1}+m} )
\2 { E(r,m) } \Bigmid \tcF_{\tau_{r-1}} \Bigr] , \end{align*}
where, since $\tA_{\tau_{r-1}+m+1} = 0$
implies that $M^{-2} (\eta_k - \sigma_k) \leq \tau_{r-1} +m+1$,
\[  \Prl  ( E(r,m+1) \mid \tcF_{\tau_{r-1}+m} )   
 \leq    1 - \Prl ( \tA_{\tau_{r-1}+m+1} = 0 \mid \tcF_{\tau_{r-1}+m} )  
  . \]
Thus, by~\eqref{eq:dies-out} and the fact that $E(r,m)$
implies $\tA_{\tau_{r-1}+m} < a_0$,  with $\eps_0 = q^{a_0} \re^{-\lambda_0}$,
\[  \Prl ( E(r,m+1) \mid \tcF_{\tau_{r-1}} ) \leq \Expl \bigl[ ( 1 - \eps_0  )
\2 { E(r,m) } \bigmid \tcF_{\tau_{r-1}} \bigr] \leq \re^{-\eps_0} \Prl ( E(r,m) \mid \tcF_{\tau_{r-1}} ) .\]
Iterating this bound gives $\Prl ( E(r,m) \mid \tcF_{0} ) \leq \re^{-\eps_0 (m-1)}$, a.s., for all $m \in\N$.

Fix $t \in \N$.
Let $K = \max \{ r :   \tau_r \leq M^{-2} ( \eta_k -\sigma_k )\}$
and $L = \min \{ r :  \gamma_r - \tau_{r-1} >  t \}$. Then
$\tau_K \leq M^{-2} (\eta_k -\sigma_k )$
and we cannot have $\gamma_{K+1} \leq M^{-2} (\eta_k -\sigma_k )$, or else we would also have $\tau_{K+1} \leq M^{-2} (\eta_k -\sigma_k )$ too.
Thus $M^{-2} (\eta_k -\sigma_k ) \leq  \gamma_{K+1}$, so $M^{-2} (\eta_k -\sigma_k ) \leq \tau_K + (\gamma_{K+1} - \tau_K)$.
For $r < L$ we have
\[ \tau_r = \sum_{j=1}^{r} (\tau_j - \gamma_j) + \sum_{j=1}^r (\gamma_j - \tau_{j-1} ) \leq r t + \sum_{j=1}^{r} (\tau_j - \gamma_j) .\]
On the event
\[ \{ K < r \} \cap \left\{ \sum_{j=1}^{r} (\tau_j - \gamma_j)  \leq t \right\} \cap \bigcap_{j=1}^{r} \left( \{ M^{-2} ( \eta_k - \sigma_k ) \leq \tau_{j-1} + t \} \cup
\{ \gamma_j - \tau_{j-1} \leq  t \} \right) ,\]
we have that either $L >r$, in which case 
\[ M^{-2} (\eta_k -\sigma_k ) \leq \gamma_{K+1} \leq \gamma_r \leq \tau_{r-1} + \max_{1 \leq j \leq r} (\gamma_{j} - \tau_{j-1}) \leq (r+1)t, \]
or else $L \leq r$ and $M^{-2} (\eta_k -\sigma_k ) \leq \tau_{L-1} + t \leq (r+1)t$ also.
 Thus
\begin{align}
\label{eq:big-bound}
& {}  \Prl ( \eta_k - \sigma_k > (r+1) M^2 t \mid \cF_{\sigma_k} ) \\
& \quad {} \leq \Prl ( K \geq r \mid \cF_{\sigma_k} )
+ \Prl \biggl( \sum_{j=1}^r ( \tau_j - \gamma_{j} ) \geq t \biggmid \cF_{\sigma_k} \biggr)   + \Prl \biggl( \bigcup_{j=1}^{r} E(j,t) \biggmid \cF_{\sigma_k} \biggr) .
\nonumber
 \end{align}
For the first term on the right-hand side of~\eqref{eq:big-bound}, we have
\begin{align*}
\Prl ( K \geq m+1 \mid \cF_{\sigma_k} ) & = \Expl \bigl[ \Prl ( K \geq m+1 \mid \tcF_{\tau_m} ) \1 { K \geq m} \bigmid \tcF_{0} \bigr] \\
& \leq \Expl \bigl[ (1 - \Prl ( \tA_{\tau_m+1} = 0 \mid \tcF_{\tau_m} ) ) \1 { K \geq m} \bigmid \tcF_{0} \bigr] ,\end{align*}
since $\tA_{\tau_m+1} = 0$
implies $M^{-2} (\eta_k -\sigma_k ) \leq \tau_m +1 < \tau_{m+1}$.
By~\eqref{eq:dies-out}, $\Prl ( K \geq m+1 \mid \cF_{\sigma_k} ) \leq (1-\eps_0) \Prl (  K \geq m \mid \cF_{\sigma_k} )$,
and   $\Prl ( K \geq m \mid \cF_{\sigma_k} ) \leq \re^{-\eps_0m}$, a.s., for all $m \in \ZP$.

For the second term on the right-hand side of~\eqref{eq:big-bound}, we have from~\eqref{eq:hajek-bound} that
\begin{align*}
\Expl \Bigl[ \re^{\theta \sum_{j=1}^{r} ( \tau_j - \gamma_j) } \Bigmid \tcF_0 \Bigr]
& = \Expl \Bigl[ \re^{\theta \sum_{j=1}^{r-1} ( \tau_j - \gamma_j) } \Expl \bigl( \re^{\theta (\tau_r - \gamma_r ) } \bigmid \tcF_{\tau_{r-1}} \bigr)  \Bigmid \tcF_0  \Bigr] 
\\
& \leq C \Expl \Bigl[ \re^{\theta \sum_{j=1}^{r-1} ( \tau_j - \gamma_j) }  \Bigmid \tcF_0  \Bigr],
  \end{align*}
where $\theta, C$ are as in~\eqref{eq:hajek-bound}, and depend only on $\lambda_0$. Iterating this argument gives, for all $r \in \N$ and a constant $D < \infty$,
$\Expl( \re^{\theta \sum_{j=1}^{r} ( \tau_j - \gamma_j) } \mid \cF_{\sigma_k} ) \leq \re^{Dr}$, a.s.
By Markov's inequality,
\[ \Prl \biggl( \sum_{j=1}^r ( \tau_j - \gamma_{j} ) \geq t \biggmid \cF_{\sigma_k} \biggr) 
\leq \re^{Dr-\theta t} .\]
Choose $r = \lfloor \frac{\theta}{2D} \rfloor t$. 
Putting all the bounds together,  we obtain from~\eqref{eq:big-bound} that
\[  \Prl ( \eta_k - \sigma_k > c M^2 t^2 \mid \cF_{\sigma_k} ) \leq \re^{-\eps t} ,\]
for some constants $c>0$ and $\eps >0$. Then~\eqref{eq:cycle-tails} follows.
This completes the proof of (ii), and hence (i), as explained at the start of this proof.
\end{proof}

The next result shows that the cycles
 $[\sigma_{k},\eta_{k}]$ do not accumulate in finite time.

\begin{lemma}
\label{lem:regeneration-cycles-cover-all-time}
Let $\lambda \in (0,\infty)$. As $k \to \infty$, $\nu_k, \eta_k, \sigma_k \to \infty$, $\Prl$-a.s.
\end{lemma}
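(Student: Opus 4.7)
My strategy is to identify $\sigma_k - \eta_{k-1}$ as the waiting time to the next Poisson deposition, so that $\sigma_k \to \infty$ (and hence $\eta_k \geq \sigma_k \to \infty$) follows from the strong law of large numbers applied to i.i.d.\ exponentials; the claim for $\nu_k$ will then follow from the elementary observation that only finitely many nucleations can occur in any finite time interval.

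First, I would argue that $\sigma_k$ must be a Poisson arrival time: throughout the open interval $(\eta_{k-1}, \sigma_k)$ the system contains no active particles, so no nucleation or absorption event can occur, and the only mechanism available to push $A_t$ from $0$ to $1$ at time $\sigma_k$ is the deposition of a new particle by $\cP_{1,\lambda}$. By Lemma~\ref{lem:regeneration-times-finite}(i), $\eta_{k-1}$ is a.s.\ finite and is an $(\cF_t)$-stopping time, so the strong Markov property of the marked Poisson process at $\eta_{k-1}$ identifies $E_k := \sigma_k - \eta_{k-1}$ as exponentially distributed with mean $1/\lambda$ and independent of $\cF_{\eta_{k-1}}$. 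Iterating over $k$, the family $(E_k)_{k\in\N}$ is i.i.d.

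Since $\sigma_k \geq \sum_{j=1}^{k} E_j$, the strong law of large numbers gives $k^{-1} \sigma_k \to 1/\lambda$, $\Prl$-a.s., and in particular $\sigma_k \to \infty$; the inequality $\eta_k \geq \sigma_k$ built into~\eqref{eq:regeneration-times} then yields $\eta_k \to \infty$, $\Prl$-a.s., as well.

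Finally, each nucleation event consumes exactly two active particles, each of which first entered the system as a Poisson deposition, so the number of nucleations up to time $t$ is bounded above by half the number of points of $\cP_{1,\lambda}$ lying in $[0,1]\times[0,t]$, a Poisson random variable with finite mean $\lambda t/2$. Hence only finitely many $\nu_k$ can fall below any given finite level, and $\nu_k \to \infty$, $\Prl$-a.s. I do not anticipate any real obstacle: the only step requiring some care is the appeal to the strong Markov property of the marked Poisson process at the a.s.\ finite stopping time $\eta_{k-1}$, and this is entirely standard given the construction in~\S\ref{sec:construction}.
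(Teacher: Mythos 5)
Your proposal is correct and is essentially the paper's argument: the paper simply observes that $\sigma_1,\sigma_2,\ldots$ is a subsequence of the Poisson arrival times $s_1,s_2,\ldots$, so $\eta_k\ge\sigma_k\ge s_k\to\infty$ a.s., and that the $k$th nucleation requires $2k$ prior depositions, so $\nu_k\ge s_{2k}$ --- which is your argument with the strong-Markov/SLLN detour removed. One cosmetic slip: the inequality $\sigma_k\ge\sum_{j=1}^{k}E_j$ yields only $\liminf_{k\to\infty}k^{-1}\sigma_k\ge 1/\lambda$ rather than $k^{-1}\sigma_k\to 1/\lambda$, but divergence is all you need, so nothing breaks.
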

\begin{proof}
By construction, $\sigma_1, \sigma_2, \ldots$ is a subsequence of the Poisson
arrival times $s_1, s_2, \ldots$, and hence $\eta_k \geq \sigma_k \geq s_k$ for all $k \in \N$.
Similarly, the $k$th nucleation can only occur after $2k$ particles have been deposited, so $\nu_k \geq s_{2k}$ for all $k \in \N$.
But $\lim_{k \to \infty} s_k = \infty$, a.s.
\end{proof}

Lemma~\ref{lem:regeneration-cycles-cover-all-time}
  shows that $\nu_0,\nu_1, \ldots$ does not have a finite accumulation point,
so we can talk about the \emph{first} nucleation in any time interval which contains nucleations;
we have not yet proved that $\nu_k$ is finite for all $k$,
but we will do so later in this section.
Let $E_{k}$ be the event that at least one nucleation occurs 
in time interval~$[\sigma_k,\eta_k]$.
For $k \in \N$, let 
\[ \alpha_k := \min \{ j \in \N : E_{k+j} \text{ occurs} \} ,\]
the number of cycles after~$\eta_k$ until the first nucleation in time interval $(\eta_k, \infty)$,
where $\alpha_k = \infty$ if and only if there is no nucleation after time $\eta_k$. 

On the event $E_1$, 
the first nucleation occurs at some location $\zeta \ell$ for $\zeta  \in (0,1)$.
Recall that $\cB$ denotes the Borel subsets of $[0,1]$, and
denote the probability that the first nucleation occurs
during the first cycle and at spatial location in $\ell B$ by
\begin{equation}
    \label{eq:nu-def}
     \nu (\ell, \lambda ; B) := \Prll ( E_1 \cap \{ \zeta \in B \} ) , \text{ for } B \in \cB, \end{equation}
and set $\nu (\lambda ; B) := \nu (1, \lambda ; B)$. 
The probability of nucleation during the first cycle is
\[ \mu (\ell, \lambda ) := \Prll (E_1) = \nu (\ell, \lambda ; [0,1]), \text{ and } \mu (\lambda ) := \Prl (E_1) = \nu (\lambda; [0,1]) = \mu (1,\lambda).\]
The next result gives an important scaling property,
which is a consequence of the scaling properties of the Poisson process and of Brownian motion.
For a scalar $a >0$, let $a \cY_t = ( a \cI_t, \cA_t, a \cX_t)$,
i.e., scalar multiplication of all spatial variables.

\begin{lemma}
\label{lem:scaling}
Let $\ell, \lambda \in (0,\infty)$. Then
\begin{equation}
\label{eq:y-scaling}
 \Prll \left( \bigl(\ell^{-1} \cY_{\ell^2 t} \bigr)_{t \geq 0} \in \, \cdot \, \right) = \Pr_{\ell^3\lambda} \left( \bigl(  \cY_{t} \bigr)_{t\geq 0} \in \, \cdot \, \right) .\end{equation}
As a consequence, for all $\ell, \lambda \in (0,\infty)$,
\begin{equation}
\label{eq:nu-scaling}
 \nu (\ell, \lambda ; \, \cdot\, ) = \nu ( \ell^3 \lambda ; \, \cdot\,) .\end{equation}
\end{lemma}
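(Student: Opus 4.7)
The proof will rest on two classical scaling facts, combined carefully with the construction of $\cY$ from the marked Poisson process $\cP_{\ell,\lambda}$ given in~\S\ref{sec:construction}. The first fact is Brownian scaling: if $b \in \cC_0$ has law $W$, then $\tilde b(u) := \ell^{-1} b(\ell^2 u)$ again has law $W$. The second fact is the scaling of a two-dimensional homogeneous Poisson process under a linear map: the transformation $(\xi, s) \mapsto (\ell^{-1} \xi, \ell^{-2} s)$ has Jacobian $\ell^{-3}$, so it maps $\cP_{\ell, \lambda}$ (with intensity $\lambda$ on $[0,\ell] \times \RP$) to a PPP of intensity $\ell^3 \lambda$ on $[0,1] \times \RP$. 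Combining the two, and using independence of the Brownian marks from the underlying Poisson points, the marked point process $\{ (\ell^{-1} \xi_i, \ell^{-2} s_i, \ell^{-1} b_i (\ell^2 \cdot)) \}$ under $\Prll$ has the same law as the marked point process $\{ (\xi_i, s_i, b_i) \}$ under $\Pr_{\ell^3 \lambda}$.

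Next, I would verify that the construction of $\cY$ described in~\S\ref{sec:construction} is equivariant under the scaling $(x,t) \mapsto (\ell^{-1}x, \ell^{-2} t)$. For each Poisson point $(\xi_i, s_i, b_i)$, the particle trajectory $x_i(r)$ in~\eqref{eq:x-def} transforms as
\[ \ell^{-1} x_i(\ell^2 t) = \ell^{-1} \xi_i + \ell^{-1} b_i(\ell^2 t - s_i) \1{t \geq \ell^{-2} s_i} = \tilde\xi_i + \tilde b_i(t - \tilde s_i)\1{t \geq \tilde s_i}, \]
using the rescaled objects $\tilde\xi_i, \tilde s_i, \tilde b_i$ introduced above. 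The minima in Step~1 that define the times $T_{j,k}$ and $T_j$, and hence $T$, all commute with the joint time-space rescaling (e.g., hitting times of Brownian motion of a point at distance $\ell$ scale by $\ell^2$); the island update rules in Step~3 are position-equivariant. Likewise, the initial substrate $[0,\ell]$ becomes $[0,1]$ under $\xi \mapsto \ell^{-1}\xi$. Therefore the process $\tilde\cY_t := \ell^{-1} \cY_{\ell^2 t}$ is obtained from the rescaled marked PPP by running exactly the same algorithm as was used to produce $\cY$ from $\cP_{\ell,\lambda}$. The distributional identity~\eqref{eq:y-scaling} then follows from the distributional identity of the driving marked PPPs.

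For the corollary~\eqref{eq:nu-scaling}, one observes that the event $E_1$, the cycle endpoints $\sigma_1, \eta_1$, and the normalized nucleation location $\zeta$ in~\eqref{eq:nu-def} are all determined functionals of the process $(\ell^{-1}\cY_t)$: the scaled location of the first nucleation is $\zeta$ (because it is $\ell \zeta$ in the unscaled picture), and $E_1$ is unaltered by time rescaling since it is defined in terms of the values of $A_t$, which are integer counts that scale trivially. Applying~\eqref{eq:y-scaling} to this functional immediately yields
\[ \nu(\ell, \lambda; B) = \Prll(E_1 \cap \{\zeta \in B\}) = \Pr_{\ell^3 \lambda}(E_1 \cap \{\zeta \in B\}) = \nu(\ell^3 \lambda; B). \]

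The main (and essentially only) subtlety is the bookkeeping in verifying that every component of the construction algorithm — the particle trajectories, the collision and absorption times, the set of interior islands — is jointly equivariant under $(x,t) \mapsto (\ell^{-1} x, \ell^{-2} t)$, so that the rescaled process genuinely coincides with the one built from the rescaled marked Poisson process. Once this is granted, both conclusions are immediate.
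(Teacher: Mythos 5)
Your proposal is correct and follows essentially the same route as the paper: rescale the marked Poisson process via $(x,s,b)\mapsto(\ell^{-1}x,\ell^{-2}s,\ell^{-1}b(\ell^2\,\cdot\,))$, identify its law with that of $\cP_{1,\ell^3\lambda}$ using Brownian scaling and the Poisson mapping theorem, check that the construction algorithm is equivariant under this rescaling, and then note that $E_1\cap\{\zeta\in B\}$ is a scale-invariant functional. No gaps.
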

\begin{proof}
For $\ell >0$, define the space-time scaling operation $S_\ell : \cC_0 \to \cC_0$ by $S_\ell (f) (t) = \ell^{-1} f (\ell^2 t)$, $t \in \RP$.
Then define the function $T_\ell : [0,\ell] \times \RP \times \cC_0 \to [0,1] \times \RP \times \cC_0$ by
\[ T_\ell ( x, s, f ) = \left( \frac{x}{\ell} , \frac{s}{\ell^2} , S_\ell (f) \right) .\]
We claim that 
\begin{equation}
\label{eq:mapping}
T_\ell (\cP_{\ell,\lambda} ) \text{ has the same law as } \cP_{1,\ell^3\lambda}. 
\end{equation}
To see this, 
view $\cP_{\ell,\lambda}$ as a Poisson point process with intensity measure
$\lambda \Lambda_\ell \otimes W$,
where $\Lambda_\ell$ is Lebesgue measure on $[0,\ell] \times \RP$
and $W$ is Wiener measure on $\cC_0$.
Brownian scaling~\cite[p.~12]{MoerPer} shows that~$W$ is preserved by the transformation $S_\ell$,
since   $b$ is standard Brownian motion on $\R$ if and only if $S_\ell(b)$
is too. Thus, by the mapping theorem~\cite[p.~38]{lp},
$T_\ell (\cP_{\ell,\lambda} )$ is a Poisson point process with intensity measure
$\ell^3 \lambda \Lambda_1 \otimes W$. This verifies~\eqref{eq:mapping}.

Let
$(\cY_s,s\in\RP)$
and
 $(\cY'_t,t \in \RP)$
denote the processes constructed from $\cP_{\ell,\lambda}$
and
$T_\ell (\cP_{\ell,\lambda} )$
using the algorithm described above.
The action of the map $T_\ell$ shows that
\begin{equation}
\label{eq:T-transform}
 \cY'_t = \frac{1}{\ell} \cY_{\ell^2 t} = \left( \frac{1}{\ell} \cI_{\ell^2 t} , \cA_{\ell^2 t} , \frac{1}{\ell} \cX_{\ell^2 t} \right) ,\end{equation}
since $T_\ell$ scales space by $1/\ell$ and time by $1/\ell^2$. For instance, the 
arrival time of the  $i$th particle
in $\cY'$ is $s_i' = \ell^{-2} s_i$, and the $i$th particle's 
trajectory $x_i'$ is given for $r \geq s'_i$ by 
\[ x_i' (r) = \frac{1}{\ell} \xi_i + S_\ell (b_i) ( r - \ell^{-2} s_i ) = \frac{1}{\ell} \xi_i + \frac{1}{\ell} b_i (\ell^2 r - s_i ) ,\]
so that $x_i' ( r ) = \ell^{-1} x_i ( \ell^2 r )$. Combining~\eqref{eq:T-transform}
with~\eqref{eq:mapping}, we see that $\ell^{-1} \cY_{\ell^2t}$ under $\Prll$
has the same law as $\cY'_t$ under $\Pr_{1,\ell^3\lambda}$. This proves~\eqref{eq:y-scaling}.
See Figure~\ref{fig:scaling} for a schematic.

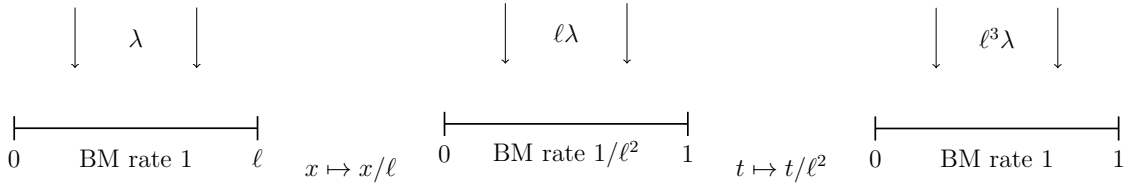
\begin{figure}[!h]
\centering
\scalebox{0.8}{
\begin{tikzpicture}
\draw[thick] (1,0) -- (5,0);
\draw[->] (2,2) -- (2,1);
\draw (3,1.5) node  {$\lambda$};
\draw[->] (4,2) -- (4,1);
\draw[thick] (1,0.2) -- (1,-0.2);
\draw[thick] (5,0.2) -- (5,-0.2);
\draw (1,-0.5) node  {$0$};
\draw (5,-0.5) node  {$\ell$};
\draw (3,-0.5) node  {BM rate $1$};
\end{tikzpicture}
\quad $x \mapsto x/\ell$ \quad
\begin{tikzpicture}
\draw[thick] (1,0) -- (5,0);
\draw[->] (2,2) -- (2,1);
\draw (3,1.5) node  {$\ell\lambda$};
\draw[->] (4,2) -- (4,1);
\draw[thick] (1,0.2) -- (1,-0.2);
\draw[thick] (5,0.2) -- (5,-0.2);
\draw (1,-0.5) node  {$0$};
\draw (5,-0.5) node  {$1$};
\draw (3,-0.5) node  {BM rate $1/\ell^2$};
\end{tikzpicture}
\quad $t \mapsto t/ \ell^2$ \quad
\begin{tikzpicture}
\draw[thick] (1,0) -- (5,0);
\draw[->] (2,2) -- (2,1);
\draw (3,1.5) node  {$\ell^3\lambda$};
\draw[->] (4,2) -- (4,1);
\draw[thick] (1,0.2) -- (1,-0.2);
\draw[thick] (5,0.2) -- (5,-0.2);
\draw (1,-0.5) node  {$0$};
\draw (5,-0.5) node  {$1$};
\draw (3,-0.5) node  {BM rate $1$};
\end{tikzpicture}}
\caption{Illustration of the scaling argument in the proof of Lemma~\ref{lem:scaling}.}
\label{fig:scaling}
\end{figure}

The event defining
$\nu (\ell, \lambda ; \, \cdot\, )$ in~\eqref{eq:nu-def},
namely $E_1 \cap \{ \zeta \in B \}$, 
is invariant under time-scaling,
and $\zeta$ is already scaled so as to be in $[0,1]$.
Then, by~\eqref{eq:y-scaling}, $\Prll ( E_1 \cap \{ \zeta \in B \} ) = \Pr_{\ell^3\lambda} ( E_1 \cap \{ \zeta \in B \} )$, 
which establishes~\eqref{eq:nu-scaling}.
\end{proof}

Fix $t \in \RP$.
Let $\zeta'_t \in (0,\ell)$ denote the location of the \emph{earliest} nucleation in the time interval~$(t, \infty)$, if there is one, otherwise
set $\zeta'_t = \infty$;
 we will shortly be able to prove Lemma~\ref{lem:tau-nice},
which says that there will a.s.~always be such a nucleation.
At time $t$ there are $I_{t} +1$ gaps,
and gap $j \in [I_t+1]$ is given by $[ Z_{I_{t},j-1} , Z_{I_{t},j}]$.
Define
\[ \zeta_t := \sum_{j \in [I_t+1]} \bbind \left\{ \zeta'_t \in ( Z_{I_{t},j-1} , Z_{I_{t},j} ) \right\} \left( \frac{\zeta'_t - Z_{I_{t},j-1} }
{Z_{I_{t},j}-Z_{I_{t},j-1} } \right) ,\]
so $\zeta_t \in (0,1)$ as long as $\zeta'_t$ is finite.  
For $t \in \RP$, $j \in [I_{t}+1]$, and $B \in \cB$, define the event
\begin{equation}
    \label{eq:D-def}
 D_{t} (j , B) := \left\{  \zeta'_t \in  [ Z_{I_{t},j-1} , Z_{I_{t},j}  ], \, \zeta_t \in B \right\} ,\end{equation}
which says that the next nucleation after time $t$ occurs in gap $j$ and at relative location in $B$.

By the strong Markov property, there is a measurable $\pi_\lambda$ such that, $\Prl$-a.s.,
\begin{equation}
\label{eq:pi-def}
  \Prl ( \{ \alpha_k = 1 \} \cap D_{\eta_k} (j , B) \mid \cF_{\eta_k} ) 
  = \pi_\lambda ( \cZ_{I_{\eta_k}} ; j, B)
  .\end{equation}
Similarly, there is a measurable $\Pi_\lambda$ such that 
\begin{equation}
\label{eq:Pi-def}
\Prl ( \alpha_k = 1 \mid \cF_{\eta_k} ) = \Pi_\lambda ( \cZ_{I_{\eta_k}} )
= \sum_{j \in [ I_{\eta_k}+1 ] } \pi_\lambda ( \cZ_{I_{\eta_k}} ; j, [0,1]) .\end{equation}
The next result gives a lower bound for $\pi_\lambda$ on a certain set;
in particular, it shows that $\Pi_\lambda  ( \cZ_{I_{\eta_k}} ) >0$. We will see in \S\ref{sec:sparse} that this bound is of the correct order as $\lambda \to 0$.

\begin{lemma}
\label{lem:intervals-shrink}
Let $B_0 := [ 1/8,7/8]$. For any $\lambda_0 \in (0,\infty)$ 
there exists a constant $\eps_0 = \eps_0 (\lambda_0) >0$ such that, for all $\lambda \in (0,\lambda_0]$ and all $k \in \ZP$, 
\[ \pi_\lambda (\cZ_{\eta_k}  ; j ,B_0 ) \geq \eps_0 \lambda L_{I_{\eta_k},j}^4, \text{ for all } j \in [ I_{\eta_k}+1 ].\]
\end{lemma}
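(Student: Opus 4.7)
My strategy is to construct a concrete favourable sub-event $F \subseteq \{\alpha_k = 1\} \cap D_{\eta_k}(j, B_0)$ whose probability can be lower-bounded explicitly in terms of $\lambda$ and $L := L_{I_{\eta_k}, j}$. The event $F$ describes the simplest scenario in which the desired nucleation occurs: the first two depositions after $\eta_k$ both land near the middle of gap $j$ in quick succession, the two resulting active particles meet near the centre of gap $j$, and nothing else happens in the meantime. On $F$, no nucleation can occur before the second deposition (only one active particle exists), and immediately after, the only two active particles nucleate before any other deposition; so the cycle $[\sigma_{k+1}, \eta_{k+1}]$ ends with the target nucleation, and both $\alpha_k = 1$ and $D_{\eta_k}(j, B_0)$ hold.

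By the strong Markov property at $\eta_k$, I condition on $\cF_{\eta_k}$ and treat the post-$\eta_k$ deposition process and Brownian motion marks as fresh and independent. Let $T_1 < T_2 < T_3 < \cdots$ denote the post-$\eta_k$ deposition times and $X_1, X_2, \ldots$ their locations. Define $F := A \cap B \cap C \cap D$, where
\begin{align*}
A & := \{ X_1 \text{ lies in gap } j \text{ at relative position in } [3/8, 5/8] \}, \\
B & := \{ T_2 - T_1 \leq L^2 \} \cap \{ X_2 \text{ lies in gap } j \text{ at relative position in } [3/8, 5/8] \}, \\
C & := \{ T_3 - T_2 > L^2 \},
\end{align*}
and $D$ is the pure Brownian event that the first particle's trajectory stays inside gap $j$ throughout $[T_1, T_2]$, and during $[T_2, T_2 + L^2]$ both particles remain inside gap $j$ and meet (for the first time) at a location whose relative position in gap $j$ lies in $B_0$.

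The probability estimates are then routine. The uniform spatial marginal of the Poisson deposition gives $\Pr ( A \mid \cF_{\eta_k} ) = L/4$. Independence of inter-arrival times and locations yields $\Pr ( B \mid A, \cF_{\eta_k} ) = (1 - \re^{-\lambda L^2})(L/4)$, and since $\lambda L^2 \leq \lambda_0$, the elementary inequality $1 - \re^{-x} \geq x (1 - \re^{-\lambda_0})/\lambda_0$ for $x \in [0, \lambda_0]$ provides a lower bound of order $\lambda L^3$; similarly $\Pr ( C \mid A, B, \cF_{\eta_k} ) = \re^{-\lambda L^2} \geq \re^{-\lambda_0}$. Finally, Brownian scaling (space by $1/L$ and time by $1/L^2$) reduces $D$ to a fixed event for two independent standard Brownian motions on $[0,1]$, whose probability $c_2$ is uniformly positive over starting positions in $[3/8, 5/8]$ and over the scaled time gap $(T_2 - T_1)/L^2 \in [0, 1]$. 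Since $A$, $B$, $C$ depend only on the Poisson deposition process while $D$ depends only on the independent Brownian marks, the four probabilities multiply to give $\Pr(F \mid \cF_{\eta_k}) \geq \eps_0 \lambda L^4$ for some $\eps_0 = \eps_0(\lambda_0) > 0$, which completes the proof.

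The principal technical point, which I expect to be the trickiest part, is the uniform lower bound on the Brownian event $D$: showing that two independent one-dimensional Brownian motions, started at arbitrary points in $[3/8, 5/8] \subset [0,1]$ with one having a head-start of at most one (scaled) time unit, meet at a location in $[1/8, 7/8]$ within one further unit of time, without either hitting $\{0, 1\}$, with probability bounded away from zero uniformly over these parameters. This can be justified by a continuity/compactness argument on Wiener space, or more concretely via a Girsanov change of measure that nudges both paths gently toward $1/2$.
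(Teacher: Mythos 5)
Your construction is, in essence, the same favourable scenario the paper uses: two consecutive depositions landing near the middle of gap $j$ within time $L^2$ of each other, no third arrival until the two particles have met, and a meeting inside $B_0$ before either particle reaches an island. The bookkeeping of the small factors also matches the paper's ($L$ for the first location, $\lambda L^3$ for the second arrival's timing and location, an order-one constant for the Brownian meeting event), so the $\lambda L^4$ scaling comes out for the same reasons. The logical checks you make (no other active particles exist at $\eta_k$, so the constructed nucleation is the first one after $\eta_k$ and ends the cycle, giving both $\alpha_k=1$ and $D_{\eta_k}(j,B_0)$) are correct.

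The one step you do not carry out is exactly the one the paper spends most of its effort on: the uniform positive lower bound for your event $D$. Asserting it via ``continuity/compactness on Wiener space'' is not quite free of charge, because $D$ constrains the \emph{first} meeting location of the two paths, and $\Pr(D)$ as a function of $(x_1,x_2,s)$ is not obviously continuous (you would need at least lower semicontinuity, e.g.\ by passing to a slightly smaller open sub-event with strict inequalities, before invoking compactness); a Girsanov argument would also work but again needs to be written out. The paper sidesteps all of this with an explicit and elementary device: it replaces ``the two particles meet in $\Lambda_{j,0}$'' by the intersection of a crossing event $F_3$ (the left particle reaches the right eighth of the gap before the left eighth, and vice versa for the right particle --- whose probability is computed exactly by the gambler's-ruin formula and is at least $1/36$) with an exit-time event $F_4$ (both particles leave the middle region within a fixed multiple of $L^2$, controlled by the reflection principle). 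On $F_3\cap F_4$ continuity of paths forces a first meeting inside $\Lambda_{j,0}$ within the allotted time, with explicit constants and no soft limiting argument. So: your proof is structurally identical to the paper's and would be complete once you supply a proof of the uniform lower bound on $D$; the crossing-plus-exit-time trick is the cleanest way to do so and is worth adopting.
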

\begin{proof}
Fix $k \in \ZP$.
To simplify notation, write $I = I_{\eta_k}$
for the number of interior islands, and, for $1 \leq j \leq I+1$, 
$Z_j = Z_{I,j}$ for the island locations 
and $L_{j} = L_{I,j}$ for the lengths of the gaps.
Take $j \in [I+1]$.
Define nested subintervals of $[Z_{j-1},Z_{j}]$ by
\[ \Lambda_{j,k} = [ Z_{j-1} + \tfrac{k+1}{8} L_{j}, Z_{j} - \tfrac{k+1}{8} L_{j} ], \text{ for } k \in \{0,1,2\} . \]
We will define a series of events whose intersection implies that
nucleation occurs in $\Lambda_{j,0}$.
Let $T := \min \{ i \in \ZP : s_i \geq \eta_k \}$.
Take a constant $t_0 \in (1,\infty)$ to be chosen later. 
Let 
\[ F_1 := \{ \xi_T \in \Lambda_{j,2} \} \cap \Bigl\{ \sup_{0 \leq t \leq L_{j}^2} | x_T (s_T+t) - \xi_T | < \tfrac{1}{8} L_{j} \Bigr\}, \]
the event that the next deposition occurs in $\Lambda_{j,2}$, and that $x_T$
stays in $\Lambda_{j,1}$ through time interval $[s_T,s_T+L_{j}^2]$. Define  the event 
\[ F_2 := \{ s_{T+1} \leq s_T + L_{j}^2 \}  \cap \{ \xi_{T+1} \in \Lambda_{j,1} \} \cap \{ s_{T+2} > s_T + t_0 L_{j}^2\} ,\]
that a single arrival occurs during time interval $(s_T,s_T+L_{j}^2]$ and
 at location in~$\Lambda_{j,1}$, and no arrival occurs during time interval $( s_T+L_{j}^2,s_T+t_0 L_{j}^2]$.

On $F_1 \cap F_2$,
at time $s_{T+1}$
both particles $T, T+1$ are active and are at locations in $\Lambda_{j,1}$, since neither can have encountered another active particle or an existing island.
Suppose (without loss of generality) that the leftmost of the two active particles $T, T+1$ at time $s_{T+1}$ is the particle labelled $T$: i.e., $x_T (s_{T+1} ) < x_{T+1} (s_{T+1})$.
Let $F_3$ denote the event that 
both $x_T$ visits $[ Z_{j} - \frac{1}{8} L_{j} , Z_{j} ]$ before
visiting $[Z_{j-1}, Z_{j-1} + \frac{1}{8} L_{j} ]$, and $x_{T+1}$ visits $[Z_{j-1}, Z_{j-1} + \frac{1}{8} L_{j} ]$ before visiting $[ Z_{j} - \frac{1}{8} L_{j} , Z_{j} ]$.
Also let $F_4$ denote the event that both $x_T$ and $x_{T+1}$ exit the interval $\Lambda_{j,0}$
before time $s_{T+1} + t_0  L_{j}^2$. If $F_3 \cap F_4$ occurs, then  the particles $T, T+1$ must meet in the interval $\Lambda_{j,0}$ before time $s_{T+1}+t_0  L_{j}^2$,
and, still being active, nucleate.
Therefore,
\begin{equation}
    \label{eq:four-events}
 \pi_\lambda (\cZ_{I}   ; j ,B_0 ) \geq \Prl (F_1 \cap F_2 \cap F_3 \cap F_4 \mid \cF_{\eta_k} ) 
 .\end{equation}

We bound the probability on the right-hand side of~\eqref{eq:four-events}: for concreteness, we give a quantitative estimate,
although we make no attempt to optimize the constants. We have
\[ \Prl ( F_1 \mid \cF_{s_T} )
\geq \frac{4}{\pi} \exp \left\{ -8\pi^2 \right\} - \frac{4}{3\pi}   \exp \left\{ - 72 \pi^2 \right\} =: q_1 > 10^{-35}, \text{ on } \{ \xi_T \in \Lambda_{j,2} \} ,\]
using bounds on two-sided exit times from e.g.~\cite[p.~1047]{jp}. 
Also, 
\begin{align*}
\Prl ( F_2 \mid \cF_{s_T} )
& \geq \frac{1}{2} L_{j} \cdot \lambda L_{j}^2 \re^{- L_{j}^2  \lambda}  \cdot \re^{-(t_0-1)  L_{j}^2  \lambda}  \geq p_\lambda L_{j}^3 ,\end{align*}
where  $p_\lambda :=  \frac{\lambda}{2} \re^{- \lambda t_0}$,
 and,
given $\cF_{s_T}$, $F_1$ and $F_2$ are independent.
So 
\[ \Prl (F_1 \cap F_2 \mid \cF_{\eta_k} ) =
\Expl \bigl[ \Prl ( F_1 \mid \cF_{s_T} ) \Prl ( F_2 \mid \cF_{s_T} ) \1 { \xi_T \in \Lambda_{j,2} } \bigmid \cF_{\eta_k} \bigr]
\geq \frac{q_1 p_\lambda}{4}  L_{j}^4 .\]
Brownian motion started at $x \in (a,b)$ hits $b$ before $a$ with probability $\frac{x-a}{b-a}$,
so 
\begin{equation}
\label{eq:f5}
\Prl (F_3 \mid \cF_{s_{T+1}} ) = \left( \frac{8 x_T (s_{T+1}) - 8 Z_{j-1} - L_j}{6L_j} \right) \left( \frac{8Z_j - 8 x_{T+1} (s_{T+1}) - L_j}{6L_j} \right),
\end{equation}
which is at least $1/36$ on $F_1 \cap F_2$.
Let $\tau$ be the first exit time of a Brownian motion started at $x \in [2/8,6/8]$
from the interval $[1/8,7/8]$. The minimal distance from $x$ to the set $\{1/8,7/8\}$
is at most $3/8$, so, if $w$ is Brownian motion on $\R$,
\[ \Pr ( \tau \geq t ) \leq \Pr \left( \sup_{0 \leq s \leq t} w_s \leq \frac{3}{8} \right)
= 1 - 2 \Pr \left( w_1 \geq \frac{3}{8 \sqrt{t}} \right) ,\]
by the reflection principle and scaling. Hence
\[ \Pr ( \tau \geq t ) \leq 2 \int_0^{3/(8\sqrt{t})} \frac{1}{\sqrt{2\pi}} \re^{-u^2 /2} \ud u
\leq \frac{3}{4 \sqrt{6 t}} .\]
Taking $t = t_0 = 1944$ ensures that $\Pr ( \tau \geq t_0 ) \leq 1/144$, so that,
by Brownian scaling,
\begin{equation}
\label{eq:f6} \Prl ( F^\rc_4 \mid \cF_{s_{T+1}} ) \leq 2 \Pr ( \tau \geq t_0 ) \leq  \frac{1}{72} , \text{ on } F_1 \cap F_2.
\end{equation}
Combining~\eqref{eq:f5} and~\eqref{eq:f6} we get
\[ \Prl ( F_3 \cap F_4 \mid \cF_{s_{T+1}} ) 
\geq \Prl ( F_3 \mid \cF_{s_{T+1}} ) - \Prl ( F^\rc_4 \mid \cF_{s_{T+1}} )
\geq \frac{1}{72}, \text{ on } F_1 \cap F_2.
\]
Hence we conclude that
\begin{align*}
     \Prl (F_1 \cap F_2 \cap F_3 \cap F_4  \mid \cF_{\eta_k}) & \geq
\Expl \bigl[  \Prl ( F_3 \cap F_4 \mid \cF_{s_{T+1}} ) \2 { F_1 \cap F_2 }  \bigmid \cF_{\eta_k} \bigr] \\
& \geq \frac{1}{72} \Prl ( F_1 \cap F_2 \mid \cF_{\eta_k} ) 
\geq \frac{q_1 p_\lambda}{288} L_{j}^4,\end{align*}
which, with~\eqref{eq:four-events}, completes the proof on setting $\eps_0 = \frac{q_1}{576} \re^{-1944\lambda_0}$.
\end{proof}

Now we can complete the proof of Lemma~\ref{lem:tau-nice}.

\begin{proof}[Proof of Lemma~\ref{lem:tau-nice}.]
Fix $\lambda_0 = \lambda > 0$, and let $\eps_0$ be as in Lemma~\ref{lem:intervals-shrink}.
By~\eqref{eq:Pi-def},
\begin{align*}
    \Prl (\alpha_k = 1 \mid \cF_{\eta_k} )   = \Pi_\lambda ( \cZ_{I_{\eta_k}} )    \geq \sum_{j \in [ I_{\eta_k}+1]} \pi_\lambda (\cZ_{I_{\eta_k}} ; j , B_0 )  
     \geq \eps_0 \lambda \sum_{j \in [ I_{\eta_k}+1]} L_{I_{\eta_k},j}^4 ,
\end{align*}
by Lemma~\ref{lem:intervals-shrink}.
Then, by Jensen's inequality, $\Prl (\alpha_k = 1 \mid \cF_{\eta_k} ) \geq \eps_0 \lambda (1 + I_{\eta_k})^{-3}$, since $\sum_{j \in [ I_{\eta_k}+1]} L_{I_{\eta_k},j} = 1$.
Also, $\{ \alpha_k = 1 \} \in \cF_{\eta_{k+1}}$. 
By L\'evy's extension of the Borel--Cantelli lemma
(e.g.~\cite[Corollary~7.20]{kall}), it follows that
$\sum_{k \in \ZP} (1+ I_{\eta_k})^{-3} = \infty$ implies that $\alpha_k =1$ for infinitely
many $k$.
On the other hand, if $\sum_{k \in \ZP} (1+ I_{\eta_k})^{-3} < \infty$,
then $I_{\eta_k} \to \infty$. In either case, there are infinitely many nucleations.
\end{proof}

The next result shows how the regenerative structure
leads to a description of 
the joint distribution of the gap which nucleates and the nucleation location
in terms of single-cycle distributions. Recall the definition of $D_t(j,B)$ from~\eqref{eq:D-def}.

\begin{lemma}
\label{lem:splitting-geometric} 
Let $\lambda \in (0,\infty)$. 
For all $k \in \ZP$, all $j \in [I_{\eta_k}+1]$, and all $B \in \cB$,
\[ \Prl ( D_{\eta_k} (j , B ) \mid \cF_{\eta_k} ) 
= \frac{\pi_\lambda ( \cZ_{I_{\eta_k}}  ; j , B )}{\Pi_\lambda ( \cZ_{I_{\eta_k}}  ) }, \text{ $\Prl$-a.s.}
\]
\end{lemma}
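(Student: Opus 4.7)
The idea is to exploit the regenerative structure: between $\eta_k$ and $\sigma_{k+1}$ there are no active particles and no islands are created, so the strong Markov property at $\eta_k$ lets us restart the process with island configuration $\cZ_{I_{\eta_k}}$ and no active particles. Each subsequent cycle is then a conditionally i.i.d.\ attempt at a nucleation (until one succeeds), where the success probability is $\Pi_\lambda(\cZ_{I_{\eta_k}})$ and the joint (gap, relative-location) distribution on success is given by $\pi_\lambda(\cZ_{I_{\eta_k}}; \, \cdot\, , \, \cdot\, )$.

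Concretely, I would decompose according to the value of $\alpha_k$:
\[
D_{\eta_k}(j,B) = \bigcup_{n=1}^\infty \bigl( \{ \alpha_k = n \} \cap D_{\eta_k}(j,B) \bigr),
\]
a disjoint union because Lemma~\ref{lem:tau-nice} ensures $\alpha_k < \infty$ a.s. On the event $\{ \alpha_k = n \}$, none of the cycles $[\sigma_{k+1}, \eta_{k+1}], \ldots, [\sigma_{k+n-1}, \eta_{k+n-1}]$ contains a nucleation, so $\cZ_{I_{\eta_{k+m}}} = \cZ_{I_{\eta_k}}$ for all $0 \leq m \leq n-1$, and the first nucleation after $\eta_k$ is precisely the first nucleation in the cycle $[\sigma_{k+n},\eta_{k+n}]$. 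Applying the strong Markov property at $\eta_{k+n-1}$ together with the definitions~\eqref{eq:pi-def} and~\eqref{eq:Pi-def},
\[
\Prl\bigl( \{\alpha_k = n\} \cap D_{\eta_k}(j,B) \bigmid \cF_{\eta_{k+n-1}} \bigr)
= \pi_\lambda ( \cZ_{I_{\eta_k}} ; j, B) \, \bbind\bigl\{ \alpha_k \geq n \bigr\}, \ \as
\]
Iterating the strong Markov property at $\eta_{k+n-2}, \ldots, \eta_{k+1}, \eta_k$ and using~\eqref{eq:Pi-def} at each step, the event $\{\alpha_k \geq n\}$ contributes a factor $(1 - \Pi_\lambda(\cZ_{I_{\eta_k}}))^{n-1}$, yielding
\[
\Prl\bigl( \{\alpha_k = n\} \cap D_{\eta_k}(j,B) \bigmid \cF_{\eta_k} \bigr)
= \bigl(1 - \Pi_\lambda(\cZ_{I_{\eta_k}}) \bigr)^{n-1} \pi_\lambda ( \cZ_{I_{\eta_k}} ; j, B), \ \as
\]

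Summing the geometric series (which converges since $\Pi_\lambda(\cZ_{I_{\eta_k}}) > 0$ a.s., by Lemma~\ref{lem:intervals-shrink} applied with $B_0 \supseteq \{1/2\}$ for instance), I obtain
\[
\Prl ( D_{\eta_k}(j,B) \mid \cF_{\eta_k} ) = \frac{ \pi_\lambda ( \cZ_{I_{\eta_k}} ; j, B) }{ \Pi_\lambda ( \cZ_{I_{\eta_k}} ) } ,
\]
as required. The only real subtlety is justifying the iterated application of the strong Markov property at the (a.s.\ finite, by Lemma~\ref{lem:regeneration-times-finite}) stopping times $\eta_{k+m}$, together with the observation that, on $\{\alpha_k \geq n\}$, the measurable functional $\pi_\lambda(\cZ_{I_{\eta_{k+n-1}}}; j, B)$ appearing from the Markov property coincides with $\pi_\lambda(\cZ_{I_{\eta_k}}; j, B)$ because the island configuration has not changed; this is really the only bookkeeping step, and it is immediate from the definition of $\alpha_k$.
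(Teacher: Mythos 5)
Your proposal is correct and follows essentially the same route as the paper: decompose over the value of $\alpha_k$, use the regeneration at the cycle endpoints $\eta_{k+m}$ together with~\eqref{eq:pi-def} and the fact that the island configuration is unchanged on $\{\alpha_k > m\}$, deduce that $\alpha_k$ is conditionally geometric with success probability $\Pi_\lambda(\cZ_{I_{\eta_k}})$, and sum the series. The paper obtains the factor $(1-\Pi_\lambda(\cZ_{I_{\eta_k}}))^m$ by first summing the one-step identity over $j$ with $B=[0,1]$ to get a recursion for $\Prl(\alpha_k>m\mid\cF_{\eta_k})$ rather than by explicitly iterating the Markov property, but this is only a cosmetic difference.
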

\begin{proof}
Fix $k \in \ZP$ and write $I = I_{\eta_k}$.
For $m \in \ZP$,
\begin{align}
&  {} \Prl ( \{ \alpha_k = m+1 \} \cap D_{\eta_k} (j , B) \mid \cF_{\eta_k} ) \nonumber\\
& \quad {} = \Expl \Bigl[ \Prl ( \{ \alpha_k = m+1 \} \cap D_{\eta_{k+m}} (j , B ) \mid \cF_{\eta_{k+m}} ) \1 { \alpha_k > m } \Bigmid \cF_{\eta_k} \Bigr] \nonumber\\
& \quad {} = \Expl \Bigl[  \pi_\lambda ( \cZ_I  ; j, B )  \1 { \alpha_k > m } \Bigmid \cF_{\eta_k} \Bigr] \nonumber\\
& \quad {} = \pi_\lambda ( \cZ_I ; j, B )  \Prl ( \alpha_k > m \mid \cF_{\eta_k} )  , \label{eq:j-nuleate3} 
\end{align}
using the regeneration at time~$\eta_{k+m}$ and~\eqref{eq:pi-def}.
Taking~$B=[0,1]$ and summing over $j \in [I+1]$, we get
$\Prl ( \alpha_k = m +1 \mid \cF_{\eta_k} )
= \Pi_\lambda ( \cZ_I ) \Prl ( \alpha_k > m \mid \cF_{\eta_k} ) $.
In other words,
\begin{align*} \Prl ( \alpha_k > m+1 \mid \cF_{\eta_k} )
& = \Prl ( \alpha_k >  m \mid \cF_{\eta_k} ) - \Prl ( \alpha_k = m +1 \mid \cF_{\eta_k} )\\
& = \left( 1 - \Pi_\lambda ( \cZ_I  ) \right)  \Prl ( \alpha_k >  m \mid \cF_{\eta_k} )   .\end{align*}
Iterating this gives $\Prl ( \alpha_k >  m \mid \cF_{\eta_k} ) = \left( 1 - \Pi_\lambda ( \cZ_I  ) \right)^m$.
Thus, by~\eqref{eq:j-nucleate3},
\[ \Prl ( \{ \alpha_k = m+1 \} \cap D_{\eta_k} (j , B) \mid \cF_{\eta_k} ) 
= \left( 1 - \Pi_\lambda ( \cZ_I  ) \right)^m \pi_\lambda ( \cZ_I  ; j, B ) .\]
Summing over $m \in \ZP$ gives the result.
\end{proof}

\section{Splitting distribution estimates}
\label{sec:splitting}

Define $\kappa_n : \Delta_n \times [n+1] \times \cB \to [0,1]$, $n \in \ZP$, by
\begin{equation}
\label{eq:kappa-def}
 \kappa_n ( z ; j, B) := \frac{ ( z_j - z_{j-1} )^4}{\sum_{i \in [n+1]}  (z_i - z_{i-1} )^4} \Phi_0 (B) ,\end{equation}
the interval-splitting kernel in~\eqref{eq:interval-splitting} specialized to the parameters $r_0$ and $\Phi_0$ as appearing in Theorem~\ref{thm:small-lambda}.
The main result of this section, as follows, shows that
  the evolution of the island locations in our nucleation process is approximated by the kernel~\eqref{eq:kappa-def}.
	This result will serve both for fixed time as $\lambda \to 0$, and for fixed $\lambda$ in the long-time limit. In the supremum in Proposition~\ref{prop:splitting-distribution}, and subsequent similar instances,  $B_j \in \cB$ for each $j$.

\begin{proposition}
\label{prop:splitting-distribution} 
For any $\lambda_0 \in (0,\infty)$ 
there exists a constant $C_{2} = C_{2} (\lambda_0) < \infty$
such that, for all $\lambda \in (0,\lambda_0]$ and all $k \in \ZP$, $\Prl$-a.s.,
\[    \sup_{ B_1 , \ldots, B_{I_{\eta_k}+1} } \, \left|
\sum_{j \in [I_{\eta_k}+1]} \! \Prl ( D_{\eta_k} (j, B_j) \mid \cF_{\eta_k} ) -  \!
\sum_{j \in [I_{\eta_k}+1]} \! \kappa_{I_{\eta_k}} ( \cZ_{I_{\eta_k}} ; j , B_j ) 
  \right| \leq
C_{2} \lambda^{1/2} M^{3/2}_{\eta_k} .	\] 
\end{proposition}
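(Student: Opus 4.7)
The starting point is Lemma~\ref{lem:splitting-geometric}, which identifies $\Prl(D_{\eta_k}(j, B_j) \mid \cF_{\eta_k})$ with $\pi_\lambda(\cZ_{I_{\eta_k}}; j, B_j) / \Pi_\lambda(\cZ_{I_{\eta_k}})$. Comparing with the definition~\eqref{eq:kappa-def} of $\kappa_n$, the proposition reduces to showing that the ratio $\pi_\lambda(\cZ; j, B)/\Pi_\lambda(\cZ)$ is uniformly close to $L_{I_{\eta_k},j}^4 \Phi_0(B)/\sum_i L_{I_{\eta_k},i}^4$, and the plan is to obtain a quantitative per-gap asymptotic
\[ \pi_\lambda(\cZ_{I_{\eta_k}}; j, B) = \lambda L_{I_{\eta_k},j}^4 \int_B \psi(z) \ud z + R_j, \]
with a tractable error $R_j$, and then pass to the ratio.

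The key tool for the asymptotic is Brownian scaling. To leading order as $\lambda \to 0$, the event $\{\alpha_k = 1\} \cap D_{\eta_k}(j,B)$ is carried by trajectories in which the first arrival of the next cycle lands in gap $j$ (probability equal to $L_{I_{\eta_k},j}$) and the subsequent dynamics, restricted to gap $j$, produce a nucleation at relative location in $B$. The probability of the latter is approximately the single-interval nucleation probability $\nu(L_{I_{\eta_k},j}, \lambda; B) = \nu(L_{I_{\eta_k},j}^3 \lambda; B)$, using the scaling relation~\eqref{eq:nu-scaling}. The next step is the small-$\lambda$ expansion $\nu(\lambda; B) = \lambda \int_B \psi(z) \ud z + O(\lambda^2)$ as $\lambda \to 0$: to leading order, the first cycle has exactly two particles, a first arrival at uniform position $u$ doing Brownian motion, and a second arrival at position $v$ before the first is absorbed at $\{0,1\}$, the two then meeting at some point $z$ before either is absorbed. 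Integrating the meeting-location density over the arrival positions and times produces the claimed leading term, with $\psi$ identified via a planar Brownian exit problem from a right-angled triangle (the computation deferred to Section~\ref{sec:triangle}); three-or-more-particle cycles contribute $O(\lambda^2)$.

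Combining these expansions, summing over $j$, and using both $\Pi_\lambda(\cZ_{I_{\eta_k}}) \geq \eps_0 \lambda \sum_j L_{I_{\eta_k},j}^4$ (from Lemma~\ref{lem:intervals-shrink}) to keep the denominator bounded away from zero, together with elementary inequalities of the form $\sum_j L_{I_{\eta_k},j}^a \leq M_{\eta_k}^{a-4} \sum_j L_{I_{\eta_k},j}^4$ for $a \geq 4$ (using $\sum_j L_{I_{\eta_k},j} = 1$ and $L_{I_{\eta_k},j} \leq M_{\eta_k}$) to estimate the error sums, one forms the ratio and supremises over the choices of $B_j$ to arrive at the stated bound $C_2 \lambda^{1/2} M_{\eta_k}^{3/2}$.

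The main obstacle is controlling the coupling error that breaks the clean decomposition $\pi_\lambda(\cZ; j, B) \approx L_{I_{\eta_k},j} \nu(L_{I_{\eta_k},j}^3 \lambda; B)$: the "first nucleation" constraint in the definition of $D_{\eta_k}(j,B)$ couples the dynamics across different gaps, since a nucleation occurring first in some gap $i \neq j$ would invalidate a would-be nucleation in gap $j$. Bounding this coupling error requires careful use of the cycle-duration tail bound from Lemma~\ref{lem:regeneration-times-finite}(ii), combined with the observation that the probability of multiple arrivals in distinct gaps during a single cycle is small in the sparse-deposition regime $\lambda M_{\eta_k}^3 \ll 1$, which is precisely the regime in which the bound $\lambda^{1/2} M_{\eta_k}^{3/2}$ is informative.
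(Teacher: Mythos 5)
Your proposal follows essentially the same route as the paper's proof: reduce via Lemma~\ref{lem:splitting-geometric} to the ratio $\pi_\lambda/\Pi_\lambda$, decompose by the gap $J$ containing the first post-cycle arrival (contributing the factor $L_j$), use the scaling relation~\eqref{eq:nu-scaling} and the small-$\lambda$ expansion of $\nu(\lambda;\cdot)$ via the two-particle exit problem, control cross-gap nucleations with the cycle-duration tail bound and Poisson estimates, and close with the $\sum_j L_j^a \le M^{a-4}\sum_j L_j^4$ bookkeeping. The only notable (and harmless) deviations are that you bound the denominator below by $\eps_0\lambda\sum_j L_j^4$ directly from Lemma~\ref{lem:intervals-shrink} rather than, as the paper does, extracting $\Pi_\lambda \ge \tfrac{\mu\lambda}{2}\sum_j L_j^4$ from the two-sided approximation on a good event, and that you claim an $O(\lambda^2)$ error for $\nu(\lambda;B)-\lambda\Phi_1(B)$ where the paper only establishes (and only needs) $O(\lambda^{5/3})$.
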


The rest of this section will develop the proof of Proposition~\ref{prop:splitting-distribution},
which is built on the regeneration structure in Lemma~\ref{lem:splitting-geometric}.
First, we explain the origin of $\Phi_0$.

Recall from~\eqref{eq:nu-def} that $\nu ( \lambda ; B ) = 
 \Prl ( E_1 \cap \{ \zeta \in B \} )$,
where $E_1$ is the event that at least one nucleation occurs in time interval $[\sigma_1,\eta_1]$,
and $\zeta$ is the location of the first nucleation.
For the $\lambda \to 0$
asymptotics of $\nu(\lambda;B)$ we need some more notation.

Let $w$ denote standard Brownian motion on $\R$, started at $x \in [0,1]$,
and set $\tau := \inf \{ t \in \RP : w_t \notin (0,1) \}$,
the first exit time from the interval $(0,1)$. Then for $B \in \cB$,
\[ \Pr ( w_t \in B, \, t \leq \tau \mid w_0 = x ) = \int_B q_t (x,y) \ud y, \]
where
\begin{equation}
\label{eq:q-def}
 q_t (  x,y ) :=
\frac{1}{\sqrt{2\pi t}} \sum_{k \in \Z} \left\{ \exp \left( - \frac{ (y-x+2k)^2}{2t} \right)
- \exp \left( - \frac{(y+x+2k)^2 }{2t} \right) \right\} ; \end{equation}
see e.g.~\cite[pp.~341--342]{feller2} or~\cite[pp.~122, 174]{bosa}. The density $q_t ( x ; \, \cdot \, )$
corresponds to a (defective) distribution with total mass $\Pr ( t \leq \tau \mid w_0 = x)$.

Let $W$ denote a standard Brownian motion in $\R^2$ given in  components as  $W_t = (W_t^{(1)}, W_t^{(2)})$, 
and let
$S := \partial [0,1]^2$ and $D := \{ (x,y) \in [0,1]^2 : x = y\}$
denote the boundary and diagonal of the unit square, respectively.
For measurable $A \subseteq \R^2$, define $\tau_A := \inf \{ t \in \RP : W_t \in A\}$.
For $u,v \in [0,1]^2$ and $B \in \cB$, set
\begin{equation}
\label{eq:H-def}
H ( u ,v ; B ) := \Pr ( \tau_D < \tau_S, \, W^{(1)}_{\tau_D} \in B \mid W_0 = (u,v) ) , \end{equation}
so that $H(u,v ; \, \cdot \,)$ is a measure on $([0,1],\cB)$ with total mass 
$H(u,v; [0,1]) = \Pr ( \tau_D < \tau_S \mid W_0 = (u,v) )$.
Define 
\begin{equation}
\label{eq:Phi1}
\Phi_1 (B) := \int_0^1 \ud z \int_0^1 \ud y \int_0^1 \ud x \int_0^\infty q_t (x, y) H ( y, z ; B) \ud t .\end{equation}
The proof of the following result is given in~\S\ref{sec:triangle}.
\begin{proposition}
\label{prop:Phi0-Phi1}
We have that $\Phi_1 = \mu \Phi_0$, where $\mu$
is given by~\eqref{eq:mu-def} and $\Phi_0$ is defined at~\eqref{eq:Phi0-def}.
In particular, $\Phi_1 ( [0,1] ) = \mu$.
\end{proposition}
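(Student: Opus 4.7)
The strategy is to reduce $\Phi_1(B)$ to a tractable integral by a Green's-function calculation on the interval, then invoke the explicit solution (derived in~\S\ref{sec:triangle}) for the harmonic measure of planar Brownian motion on the hypotenuse of a right-angled triangle.

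First, I would swap the order of integration in~\eqref{eq:Phi1}. Since $H(y,z;B)$ depends on neither $x$ nor $t$, Fubini gives
\[ \Phi_1(B) = \int_0^1 \int_0^1 H(y,z;B) \biggl( \int_0^1 G(x,y) \ud x \biggr) \ud y \ud z , \quad \text{where } G(x,y) := \int_0^\infty q_t(x,y) \ud t \]
is the Green's function of standard Brownian motion in~$(0,1)$ absorbed at $\{0,1\}$. The standard computation gives $G(x,y) = 2\min(x,y)\bigl(1-\max(x,y)\bigr)$, so $\int_0^1 G(x,y) \ud x = y(1-y)$, and hence
\[ \Phi_1(B) = \int_0^1 \int_0^1 y(1-y) H(y,z;B) \ud y \ud z . \]

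Second, I plan to exploit the symmetry of the square under reflection across the diagonal $D$. The map $(y,z)\mapsto(z,y)$ is an isometry that fixes $D$ and $S$ setwise; since $W^{(1)}_{\tau_D}=W^{(2)}_{\tau_D}$ on $D$, this gives $H(y,z;B)=H(z,y;B)$, and lets me work on the right-angled triangle $T_+ := \{(y,z)\in[0,1]^2 : z < y\}$, whose hypotenuse is~$D$ and whose two legs are segments of~$S$. For $(y,z)\in T_+$, Brownian motion contributes to $H$ precisely when it exits $T_+$ via its hypotenuse, so $H(y,z;\,\cdot\,)$ is exactly the harmonic measure on~$D$ of planar Brownian motion run in $T_+$ with absorbing boundary.

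The main work is now to obtain an explicit formula for $H(y,z;\,\cdot\,)$ on $T_+$; this is the content of~\S\ref{sec:triangle}, extending Smith \& Watson~\cite{sw}. I expect a Fourier-type expansion for the $45$--$45$--$90$ triangle, obtained by reflection into the square combined with antisymmetrization (or direct separation of variables), producing a sine-series representation of $H(y,z;\,\cdot\,)$ whose coefficients involve $\sinh$ and $\cosh$ evaluated at half-integer multiples of $\pi$. Substituting this into the display above, integrating against $y(1-y)$, and identifying the emerging series in $z$ with that defining $\psi$ in~\eqref{eq:psi-def} will give $\Phi_1(B) = \int_B \psi(z)\,\ud z = \mu \Phi_0(B)$, with $B=[0,1]$ yielding $\Phi_1([0,1]) = \mu$.

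The hard part is the triangle exit problem in~\S\ref{sec:triangle}. Because the $45$--$45$--$90$ triangle is not a product domain, direct separation of variables leads to an asymmetric Sturm--Liouville problem, and the precise closed form of the coefficients $a_n = 4n^{-4}\tanh(n\pi/2) - \pi n^{-3}$ will emerge only after careful termwise resummation of the resulting double series and identification of terms coming from the two non-hypotenuse legs.
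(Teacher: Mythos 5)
Your strategy matches the paper's proof essentially step for step: Fubini to reduce the $(x,t)$ integration to the weight $\int_0^1\int_0^\infty q_t(x,y)\,\ud t\,\ud x = y(1-y)$, the symmetry $H(y,z;B)=H(z,y;B)$ reducing everything to an exit problem on the right-angled triangle, the method-of-images/antisymmetrization solution of that exit problem (the paper's Theorem~\ref{thm:H-formula}), and finally integration of the resulting sine series against $y(1-y)$ to recover $\psi$. Your route to $y(1-y)$ via the explicit Green's function $2\min(x,y)\bigl(1-\max(x,y)\bigr)$ is a small, correct simplification of the paper's computation of $Q(y)$ via the spectral representation of $q_t$ and the closed form of $\sum_{n\,\mathrm{odd}}n^{-3}\sin(n\pi y)$. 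The one caveat is that the decisive computation --- carrying out $\int_0^1\int_0^{u\wedge(1-u)}\bigl[u(1-u)-v^2\bigr]h(u,v,w)\,\ud v\,\ud u$ after the change of variables $(u,v)=(\tfrac{y+z}{2},\tfrac{y-z}{2})$ and verifying that the coefficients resum to $\tfrac{24}{\pi^4}a_n$ --- is only asserted, not performed; in the paper this occupies most of the proof (a parity decomposition of $h$, a further substitution $(x,y)=(1/2-v,1/2-u)$, and two integrations by parts producing the $\tanh(n\pi/2)$ terms), and without it the identification $\Phi_1=\mu\Phi_0$ is not actually established.
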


We will use the simple fact that if $Z$ is Poisson with mean $\theta \in \RP$, then for all $k \in \N$,
\begin{align}
\label{eq:poisson-bound}
 k \Pr ( Z \geq k ) 
& \leq \Exp \bigl[ Z \1 { Z \geq k } \bigr]  =   \re^{-\theta} \sum_{\ell = k-1}^\infty \frac{\theta^{\ell+1}}{\ell!} \nonumber\\
& \leq \theta^k \re^{-\theta} \sum_{\ell = k-1}^\infty \frac{\theta^{\ell-k+1}}{(\ell - k+1)!}
= \theta^k . \end{align}
The next result shows how $\Phi_1$, and hence, by Proposition~\ref{prop:Phi0-Phi1}, $\Phi_0$,
 arises in our model.

\begin{lemma}
\label{lem:hitting-distribution-small-lambda}
For any  $\lambda_0 \in (0,\infty)$  there is a constant $C_{3} = C_{3} (\lambda_0) < \infty$ such that,
\begin{equation}  
\label{eq:lambda1}
\sup_{B \in \cB} \left|  \nu (\lambda ; B ) - \lambda \Phi_1 (B) \right| \leq C_{3} \lambda^{5/3}, \text{ for all } \lambda \in (0,\lambda_0]. 
\end{equation}
\end{lemma}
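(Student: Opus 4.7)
The plan is to decompose $\nu(\lambda;B) = \Prl(E_1 \cap \{\zeta \in B\})$ according to the number $N$ of particles deposited during the first cycle $[\sigma_1,\eta_1]$. Since any nucleation requires at least two active particles, $E_1 \subseteq \{N \geq 2\}$, and I would write
\[
\nu(\lambda;B) = \Prl(E_1 \cap \{\zeta\in B\}\cap\{N=2\}) + \Prl(E_1 \cap\{\zeta\in B\}\cap\{N\geq 3\}).
\]
The two-particle term will carry the leading contribution $\lambda \Phi_1(B) + O(\lambda^2)$, while the $\{N \geq 3\}$ term will be of strictly smaller order.

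For the two-particle term I would condition on (a) the location $\xi_1 = x$, uniform on $[0,1]$; (b) the waiting time $T_2 = s$ until the second deposition, Exp$(\lambda)$; (c) the location $\xi_2 = z$, uniform; and (d) the position $y$ of particle~1 at time $\sigma_1+s$, whose defective density on the survival event is $q_s(x,y)$ as in~\eqref{eq:q-def}. After time $\sigma_1+s$, the pair of positions evolves as a standard planar Brownian motion $W$ started at $(y,z)$, and $E_1 \cap \{\zeta \in B\} \cap \{N=2\}$ corresponds to $\{\tau_D < \tau_S,\, W^{(1)}_{\tau_D} \in B\}$ together with the absence of further Poisson arrivals during $[\sigma_1+s,\sigma_1+s+\tau_D]$ (which, given $\tau_D$, has probability $\re^{-\lambda \tau_D}$). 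Using~\eqref{eq:H-def}, this yields
\[
\Prl(E_1\cap\{\zeta\in B\}\cap\{N = 2\}) = \lambda \int_0^1 \! dx \int_0^\infty \! \re^{-\lambda s}\, ds \int_0^1 \! dy \int_0^1 \! dz\, q_s(x,y)\, \Exp\bigl[\re^{-\lambda \tau_D} \2{\tau_D < \tau_S,\, W^{(1)}_{\tau_D} \in B} \bigm| W_0 = (y,z)\bigr].
\]
Comparing with~\eqref{eq:Phi1}, the integrand differs from that for $\lambda \Phi_1(B)$ by a factor $\re^{-\lambda(s+\tau_D)}-1$, whose absolute value is at most $\lambda(s+\tau_D)$; hence the error is bounded by $\lambda^2 \int q_s(x,y)\bigl(s+\Exp[\tau_S\mid W_0=(y,z)]\bigr)\,dx\,dy\,dz\,ds$, and this is $O(\lambda^2)$ uniformly in $B$, using $\int_0^1 q_s(x,y)\,dx \leq 1$ and the exponential decay of the killed-BM survival probability for large $s$, together with the uniform boundedness of $\Exp[\tau_S \mid W_0 = (y,z)]$ on $[0,1]^2$.

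For the $\{N \geq 3\}$ term I would apply Lemma~\ref{lem:regeneration-times-finite}(ii) with $M_{\sigma_1} = 1$ to get $\Prl(\eta_1-\sigma_1 \geq t) \leq C_1\re^{-\delta t^{1/2}}$, and then truncate at a threshold $t_0$: conditionally on $\eta_1-\sigma_1 \leq t_0$, the number $N-1$ of arrivals in the cycle is stochastically dominated by a Poisson$(\lambda t_0)$ variable, so by~\eqref{eq:poisson-bound} we have $\Prl(N \geq 3) \leq C_1 \re^{-\delta t_0^{1/2}} + (\lambda t_0)^2/2$. Choosing $t_0 = c(\log \lambda^{-1})^2$ for suitable $c$ gives $\Prl(N \geq 3) \leq C \lambda^2 (\log \lambda^{-1})^4$, which is $o(\lambda^{5/3})$ as $\lambda \to 0$; combining with the two-particle estimate establishes~\eqref{eq:lambda1}.

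The main technical obstacle will be the uniform control of the $s$-weighted integral of the killed heat kernel, in particular showing that $\int_0^1 \int_0^\infty s\, q_s(x,y)\,ds\,dx$ is finite uniformly in $y \in [0,1]$: the short-time behaviour is handled via the Gaussian bound $q_s(x,y)\leq(2\pi s)^{-1/2}$ (integrable against $s$ near $s=0$ after the $x$-integration), while the long-time behaviour is controlled by the spectral decay of the killed-BM semigroup on $(0,1)$.
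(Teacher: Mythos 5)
Your proposal is correct and follows essentially the same route as the paper: the leading term is the exact two-particle computation via the killed heat kernel $q_t$ and the diagonal-hitting kernel $H$, with the $O(\lambda^2)$ comparison to $\lambda\Phi_1(B)$ controlled by $\Exp(\tau^2)$, and the remainder is killed by combining the cycle-duration tail bound of Lemma~\ref{lem:regeneration-times-finite} with a Poisson bound on extra arrivals. The only differences are cosmetic: you partition exactly on the particle count $N$ and carry the $\re^{-\lambda(s+\tau_D)}$ factor explicitly (truncating at $t_0=c(\log\lambda^{-1})^2$, giving $O(\lambda^2\log^4\lambda^{-1})$), whereas the paper sandwiches the event between $F_1\cap F_2\cap F_3$ and $(F_1\cap F_2)\cup F_3^{\rc}$ and truncates at $\lambda^{-1/6}$ to get exactly $\lambda^{5/3}$.
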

\begin{proof}
In order for $E_1$ to occur,  the particle that arrives at time $s_1 = \sigma_1$
must remain active until the second particle arrives at time $s_2$ (or else the number of active particles
would fall to zero). 
Define events $F_1 (B) = \{  1 \in \cA_{s_2}, \, x_1 (s_2) \in B \}$, and $F_1 = F_1 ((0,1))$,
the event that the first particle is still active when the second one arrives. 
We have
\[ \Prl ( F_1 (B) ) = \Expl [ q_{s_2-s_1} ( \xi_1 , B ) ]
= \int_B  \ud y \int_0^1 \ud x \int_0^\infty \lambda \re^{-\lambda t} q_t (x, y) \ud t ,\]
since $\xi_1$ is uniform on $[0,1]$, $s_2 - s_1$ is exponential with parameter $\lambda$,
and the two are independent. From time~$s_2$, on $F_1$,
there are active particles at  $x_1 (s_2) = y$ (say) and $x_2 (s_2) = \xi_2 = z$ (say);
if these two particles meet in $B$ before either exits $[0,1]$ (call this event $F_2(y,z; B)$), and no other particle is deposited in the meantime,
then $E_1 \cap \{ \zeta \in B \}$ occurs.
Any other way for $E_1 \cap \{ \zeta \in B \}$ to occur requires that a third particle arrive before time $\eta_1$.
Thus if $F_3 = \{ s_3 > \eta_1 \}$, we have 
\[ F_1 \cap F_2 (x_1(s_2), \xi_2;B)  \cap F_3 \subseteq E_1 \cap \{ \zeta \in B \} \subseteq \left( F_1 \cap F_2 (x_1(s_2), \xi_2;B) \right) \cup F_3^\rc .\]
It follows that
\begin{equation} 
\label{eq:F3-bound}
\left| \Prl (  E_1 \cap \{ \zeta \in B \}  ) - \Prl (  F_1 \cap F_2 (x_1(s_2), \xi_2;B)  ) \right| \leq \Prl ( F_3^\rc ) .\end{equation}
Here
\[ 
\Prl (  F_1 \cap F_2 (x_1(s_2), \xi_2; B) )
= \int_0^1 \ud z \int_0^1  \Prl ( F_1 (\ud y) ) \Prl ( F_2 (y,z; B ) ) ,\]
using the Markov property at time $s_2$, and the fact that $\xi_2$ is uniform on $[0,1]$.
Thus
\begin{align*}
\Prl (  F_1 \cap F_2 (x_1(s_2), \xi_2 ; B) )
& = \lambda  \int_0^1 \ud z \int_0^1 \ud y \int_0^1 \ud x \int_0^\infty \re^{-\lambda t} q_t (x, y) H ( y, z ; B) \ud t , \end{align*}
and hence, by~\eqref{eq:Phi1} and the fact that $H ( y, z ; B) \leq 1$ and $1 - \re^{-z} \leq z$,
\begin{align}
\label{eq:lambda-zero}
\sup_{B \in \cB} \left| \Prl (  F_1 \cap F_2 (x_1(s_2), \xi_2 ; B) ) - \lambda \Phi_1 (B) \right|
& \leq \lambda  \int_0^1 \ud x \int_0^\infty \bigl( 1 -  \re^{-\lambda t} \bigr) \Pr ( \tau \geq t \mid w_0 = x)  \ud t \nonumber\\
& \leq \lambda^2 \int_0^1   \Exp ( \tau^2 \mid w_0 = x)  \ud x,
\end{align}
which is $O (\lambda^2)$.
Let $\eps \in (0,1)$. If $Z$ is the number of arrivals in time interval $(\sigma_1,\sigma_1+\lambda^{-\eps}]$,
then, since $Z$ is Poisson with mean $\lambda^{1-\eps}$,
$\Prl ( Z \geq 2 ) \leq \lambda^{2-2\eps} $ by~\eqref{eq:poisson-bound}, and 
\begin{align}
\label{eq:F3-prob}
\Prl ( F_3^\rc ) 
  \leq \Prl ( \eta_1 - \sigma_1 \geq \lambda^{-\eps} ) + \Prl ( Z \geq 2 )   \leq C_1 \exp ( - \delta \lambda^{-\eps/2} ) +   \lambda^{2-2\eps}   ,\end{align}
by Lemma~\ref{lem:regeneration-times-finite}.
The result follows from \eqref{eq:F3-bound}, \eqref{eq:lambda-zero}, and~\eqref{eq:F3-prob}.
\end{proof}

Consider the end of a cycle at time $\eta_k$. Denote by $J_k \in [I_{\eta_k}+1]$
the index such that the arrival at time $\sigma_{k+1}$ lands in gap $[Z_{I_{\eta_k}, J_k-1},Z_{I_{\eta_k}, J_k}]$.
Let $\cF'_{\eta_k}$ denote the $\sigma$-algebra
generated by $\cF_{\eta_k}$ and the value~$J_k$, so $\cF'_{\eta_k}$
identifies the gap occupied by the first arrival after~$\eta_k$, but not that arrival's 
location in the gap. 

Let $G_{k} (j,s)$ be the event that
during time interval $[\sigma_{k+1},\sigma_{k+1}+s]$ at least one nucleation occurs in gap $j \in [I_{\eta_k} +1]$.
The next result gives an upper bound on nucleation occurring outside gap $J_k$
during a fixed time horizon.

\begin{lemma}
\label{lem:nucleation-bound}
Let $\lambda_0 \in (0,\infty)$.
There exists a constant  $C_4 = C_4 (\lambda_0) < \infty$ 
 such that,
for all $\lambda \in (0,\lambda_0]$, all $s \in [ 0, \frac{1}{2\lambda} ]$, 
all $k \in \ZP$, 
and all $j \in [I_{\eta_k} +1] \setminus \{ J_k \}$,
\[   \Prl ( G_{k} (j,s) \mid \cF'_{\eta_k} ) \leq C_4 \lambda^2 s L_{I_{\eta_k},j}^4  , \text{ $\Prl$-a.s.} \]
\end{lemma}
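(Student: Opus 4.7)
The plan is to bound $\Prl(G_{k}(j,s) \mid \cF'_{\eta_k})$ by the expected number of pairs of arrivals in gap~$j$ during the window whose free Brownian trajectories meet before either exits the gap. Brownian scaling then produces the factor $L^4$ (where $L := L_{I_{\eta_k},j}$), and the surviving space-time integrals collapse to $\Phi_1([0,1]) = \mu$ via Proposition~\ref{prop:Phi0-Phi1}.

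Condition on $\cF'_{\eta_k}$ and assume $j \ne J_k$. At time $\sigma_{k+1}$ there are no active particles in gap~$j$ (the single arrival at time $\sigma_{k+1}$ lies in gap $J_k \neq j$), and since islands act as absorbing barriers, any nucleation in gap~$j$ during $[\sigma_{k+1}, \sigma_{k+1}+s]$ must involve two particles both deposited in that gap during the window. Given $\cF'_{\eta_k}$, these subsequent arrivals form a Poisson process on $(\sigma_{k+1}, \sigma_{k+1}+s] \times [Z_{I_{\eta_k}, j-1}, Z_{I_{\eta_k}, j}]$ with intensity $\lambda$, equipped with independent $W$-distributed motion marks as in~\S\ref{sec:construction}.

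For an ordered pair of such arrivals at times $t_1 < t_2$ and positions $\xi_1, \xi_2$, let $M$ denote the event that the free trajectories $x_i$ defined via~\eqref{eq:x-def} first meet inside the gap before either exits it. If this pair nucleates within the window, $M$ must hold: neither particle can have been absorbed prior to the nucleation (else it could not nucleate), so neither trajectory has exited the gap, and the two trajectories coincide in the interior at the meeting time. Hence, if $N_{\mathrm{pair}}$ counts the ordered pairs of arrivals in gap~$j$ during the window for which $M$ holds, $\Prl(G_{k}(j,s) \mid \cF'_{\eta_k}) \le \Expl[N_{\mathrm{pair}} \mid \cF'_{\eta_k}]$. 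Applying Campbell's formula to this Poisson process, the strong Markov property at~$t_2$ for the first particle (yielding the absorbed-BM density $q^{(L)}$, the gap-scaled version of~\eqref{eq:q-def}), and the definition of $H$ at~\eqref{eq:H-def} (scaled analogously to $H^{(L)}$), one obtains
\[
\Expl[N_{\mathrm{pair}} \mid \cF'_{\eta_k}] = \lambda^2 \int_0^s \! dt_1 \int_{t_1}^s \! dt_2 \int_0^L \! d\xi_1 \int_0^L \! d\xi_2 \int_0^L \! dy \, q^{(L)}_{t_2-t_1}(\xi_1, y) H^{(L)}(y, \xi_2; [0,L]).
\]

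The change of variables $u := (t_2-t_1)/L^2$, $\tilde\xi_i := \xi_i/L$, $\tilde y := y/L$ pulls all the integrand back to the unit-gap scale, producing a prefactor $L^4$; bounding the $u$-range $[0, (s-t_1)/L^2]$ by $[0, \infty)$ reduces the inner triple integral to $\Phi_1([0,1])$, which equals $\mu$ by Proposition~\ref{prop:Phi0-Phi1}. This yields $\Prl(G_k(j,s) \mid \cF'_{\eta_k}) \le \mu \lambda^2 s L^4$, and hence $C_4 = \mu$ suffices. The main obstacle is the justification of the pair-counting step, namely that a nucleation in gap~$j$ witnesses the event $M$ for some pair; this follows from the unambiguous trajectory definition~\eqref{eq:x-def} together with the observation that active particles only leave the gap via exit (absorption) or collision (nucleation). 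The hypothesis $s \le 1/(2\lambda)$ is not needed for this bound itself; it is presumably imposed for downstream convenience in~\S\ref{sec:sparse}.
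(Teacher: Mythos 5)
Your proof is correct, but it takes a genuinely different route from the paper's. The paper decomposes the activity in gap~$j$ into ``local cycles'' (intervals during which gap~$j$ contains at least one active particle), observes that reaching the $\ell$th local cycle requires at least $\ell$ depositions in the gap during the window, bounds the per-cycle nucleation probability by $\mu(L_j,\lambda)=\mu(L_j^3\lambda)$ via the scaling relation~\eqref{eq:nu-scaling}, sums the resulting geometric series (this is where $s\leq\frac{1}{2\lambda}$ is used, exactly as you suspected), and finally invokes Lemma~\ref{lem:hitting-distribution-small-lambda} to extract $\mu(L_j^3\lambda)\leq C\lambda L_j^3$. You instead run a first-moment (union) bound over ordered pairs of depositions in the gap via the multivariate Mecke formula, using the correct observation that the first nucleation in gap~$j$ forces the event $M$ for the witnessing pair, whose free trajectories are unaffected by the rest of the system up to the nucleation time. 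Your route buys a cleaner constant ($C_4=\mu$, recovering the $\Phi_1$ integral directly rather than through the error estimate of Lemma~\ref{lem:hitting-distribution-small-lambda}) and dispenses with the hypothesis $s\leq\frac{1}{2\lambda}$; the paper's route avoids any appeal to second factorial moment measures and reuses machinery (single-cycle nucleation probabilities and scaling) already set up for Proposition~\ref{prop:splitting-distribution}. Two small points you should make explicit: (i) given $\cF'_{\eta_k}$, the claim that arrivals in $(\sigma_{k+1},\sigma_{k+1}+s]$ form a Poisson process of intensity $\lambda$ independent of the conditioning uses the strong Markov property of the marked Poisson process at the stopping time $\sigma_{k+1}$; and (ii) the event $M$ for a fixed pair of space-time points depends only on that pair's Brownian marks, which is what licenses the clean factorization inside the Mecke integral.
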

\begin{proof}
Fix $k \in \ZP$ and write $I = I_{\eta_k}$, $J = J_k$, and, for $1 \leq j \leq I+1$,
 $Z_j = Z_{I,j}$ and $L_j = L_{I,j}$. 
Given $\cF_{\eta_k}'$, take $j \in [I+1] \setminus \{ J \}$.
For the process restricted to the interval $[Z_{j-1},Z_j]$,
define `local cycles' $[\sigma_{j,\ell},\eta_{j,\ell}]$ by $\eta_{j,0} := \sigma_{k+1}$ (at which point
there are no active particles in gap $j$) and, for $\ell \in \N$,
\begin{align*} \sigma_{j,\ell} 
& = \inf \Bigl\{ t > \eta_{j,\ell-1} : \sum_{i \in \cA_t} \1 { x_i (t) \in [ Z_{j-1},Z_j ] }  = 1 \Bigr\} , \\ 
\eta_{j,\ell} & = \inf \Bigl\{ t > \sigma_{j,\ell} :  \sum_{i \in \cA_t} \1 { x_i (t) \in [ Z_{j-1},Z_j ] } = 0 \Bigr\} .\end{align*}
Nucleation in $[Z_{j-1},Z_j]$ can only occur during time intervals $[\sigma_{j,\ell},\eta_{j,\ell}]$.
In order for $G_k (j,s)$ to occur via nucleation during $[\sigma_{j,\ell},\eta_{j,\ell}]$,
there must have been at least $\ell$ arrivals in $[Z_{j-1},Z_j]$ during time interval $[\sigma_{k+1}, \sigma_{k+1} + s]$,
and then nucleation must occur during that cycle, an event of probability $\mu ( L_j, \lambda )$. Together with~\eqref{eq:poisson-bound}
this gives
\begin{align*}
\Prl ( G_{k} (j,s) \mid \cF'_{\eta_k} ) 
& \leq \sum_{\ell =1}^\infty \lambda^\ell L_j^\ell s^\ell \mu (L_j, \lambda ) = \lambda L_j s \mu (L_j^3 \lambda) \sum_{\ell =0}^\infty \lambda^\ell L_j^\ell s^\ell,
\end{align*}
by~\eqref{eq:nu-scaling}. Here $\lambda L_j s \leq 1/2$, provided $s \leq \frac{1}{2\lambda}$. Then $\Prl ( G_{k} (j,s) \mid \cF'_{\eta_k} ) \leq 2 \lambda L_j s \mu (L_j^3 \lambda)$,
and Lemma~\ref{lem:hitting-distribution-small-lambda} completes the proof.
\end{proof}

Now we can give the proof of Proposition~\ref{prop:splitting-distribution}.

\begin{proof}[Proof of Proposition~\ref{prop:splitting-distribution}.]
Fix $k \in \ZP$ and write $I = I_{\eta_k}$, $J=J_k$, $M = M_{\eta_k}$, and, for $1 \leq j \leq I+1$,
$Z_j = Z_{I,j}$
and $L_{j} = L_{I,j}$.
The new arrival at time $\sigma_{k+1}$ is deposited in gap~$J$. We show that the main contribution to
$\Prl ( D_{\eta_k} (j, B) \mid \cF_{\eta_k} )$ comes from $J = j$.

Fix $\lambda_0 \in (0,\infty)$.
Define event $G_k (s) := \cup_{j \in [I+1] \setminus \{ J\}} G_{k} ( j , s)$,
that there is at least one nucleation outside interval $[Z_{J-1}, Z_{J}]$
during time interval $[\sigma_{k+1}, \sigma_{k+1}+s]$.
By Lemma~\ref{lem:nucleation-bound},
\begin{align*}
\Prl ( G_k(s) \mid \cF_{\eta_k} ) 
& \leq \Expl \biggl[  \sum_{j \in [I+1] \setminus \{ J \} } \Pr ( G_{k} ( j , s) \mid \cF_{\eta_k}' ) \biggmid \cF_{\eta_k} \biggr] \\
& \leq  C_4 \lambda^2 s \Expl \biggl[  \sum_{j \in [I+1] \setminus \{ J \} }  L_{j}^4 \biggmid \cF_{\eta_k} \biggr]  \leq C_4 \lambda^2 s \sum_{j \in [I+1]}  L_{j}^4 ,
\end{align*}
for all $\lambda \in (0,\lambda_0]$ and all $s \leq \frac{1}{2\lambda}$.
Moreover, from Lemma~\ref{lem:regeneration-times-finite} we have that
\begin{align*} \Prl  ( \eta_{k+1} - \sigma_{k+1} \geq s \mid \cF_{\eta_k} ) & = \Expl \bigl[  \Prl  ( \eta_{k+1} - \sigma_{k+1} \geq s \mid \cF_{\sigma_{k+1}} ) \bigmid \cF_{\eta_k} \bigr] \\
& \leq C_1 \Expl \bigl[ \exp ( - \delta M_{\sigma_{k+1}}^{-1} s^{1/2} )  \bigmid \cF_{\eta_k} \bigr] . \end{align*}
Then, if $G_\star := G_k ( \eta_{k+1} - \sigma_{k+1})$,  since $M_{\sigma_{k+1}} \leq M_{\eta_k} = M$, we get
\begin{align}
\label{eq:E-star-bound}
 \Prl  ( G_\star \mid \cF_{\eta_k} )
& \leq \Prl ( \eta_{k+1} - \sigma_{k+1} \geq s \mid \cF_{\eta_k} ) + \Prl ( G_k(s) \mid \cF_{\eta_k} )  \nonumber\\
& \leq C_1 \exp ( - \delta M^{-1} s^{1/2} ) + C_4 \lambda^2 s \sum_{i \in [I+1]}  L_{i}^4 , 
\end{align}
for   all $\lambda \in (0,\lambda_0]$ and all $s \leq \frac{1}{2\lambda}$.
For $\eps \in (0,1)$, take 
$s = M^{2-\eps} \min ( \lambda^{-1/2} , \frac{1}{2} \lambda^{-1} )$. By~\eqref{eq:E-star-bound} and noting that  $\sum_{i \in[I+1]} L_i^4 \geq M^4$,
we get, for some $C<\infty$ and all $\lambda \in (0,\lambda_0]$,
\begin{equation}
\label{eq:E-star}
 \Prl ( G_\star \mid \cF_{\eta_k} ) \leq C \lambda^{3/2} M^{2-\eps} \sum_{i \in[I+1]} L_i^4  , \as
 \end{equation}

For $t\in\RP$, let $A'_t$ denote the number
of active particles in gap $[Z_{J-1},Z_J]$
at time $\sigma_{k+1} + t$, and let $\eta' := \inf \{ t > 0 : A'_t = 0\}$;
note $A'_0 = 1$.
Let $\zeta^\star \in (0,1)$ denote the relative location
of the first nucleation for 
the process restricted to gap $[Z_{J-1},Z_J]$.
Observe that on the event $G^\rc_\star$,
 we have $\zeta_{\eta_k} = \zeta^\star$ and nucleation occurs in gap~$J$.
Let $E'$ be the event that nucleation occurs in gap $[Z_{J-1},Z_J]$
during time interval $[\sigma_{k+1},\sigma_{k+1}+\eta']$. Then
\begin{align}
\label{eq:j-nucleate1}
  \left|\, \sum_{j \in [I+1]} \Prl ( \{ \alpha_k = 1 \} \cap D_{\eta_k} (j , B_j) \mid \cF_{\eta_k} ) 
   \right. & \left. \! {} - {} \!\! \sum_{j \in [I+1]} \Prl ( E' \cap \{ J =j \} \cap \{ \zeta^\star \in B_j \}  \mid \cF_{\eta_k} ) \, \right|  \nonumber\\
& {}    \qquad \qquad\qquad \qquad  {} \leq \Prl ( G_\star  \mid \cF_{\eta_k} ) .
\end{align}
Here
\begin{align*}
  \Prl ( E' \cap \{ J = j \} \cap \{ \zeta^\star \in B_j \} \mid \cF_{\eta_k} ) = 
	\Expl \Bigl[ \Prl ( E' \cap \{ \zeta^\star \in B_j \} \mid \cF'_{\eta_k} ) \1 { J = j } \Bigmid \cF_{\eta_k} \Bigr] .\end{align*}
	The event $E' \cap \{ \zeta^\star \in B\}$ depends only on the
	process restricted to the interval $[Z_{J-1},Z_J]$ after time $\sigma_{k+1}$,
	which has the same law as the process
	on interval $[0,L_J]$ after time $\sigma_1$, for which the event
	 $E' \cap \{ \zeta^\star \in B \}$ translates as $E_1 \cap \{ \zeta \in B \}$. Thus, 
\begin{align*}
\Prl ( E' \cap \{ \zeta^\star \in B \} \mid \cF'_{\eta_k} ) & = \Pr_{L_J,\lambda}  ( E_1 \cap \{  \zeta \in B \} )   = \nu ( L_J , \lambda ; B) , \end{align*}
by~\eqref{eq:nu-def}.
Then, since $\nu ( L_j , \lambda ; B)$ is $\cF_{\eta_k}$-measurable and  $\Prl ( J = j \mid \cF_{\eta_k} ) = L_j$, we obtain
\begin{equation}
\label{eq:j-nucleate4}
\Prl ( E' \cap \{ J = j \} \cap \{ \zeta^\star \in B_j \} \mid \cF_{\eta_k} )
=  L_j \nu ( L_j , \lambda ; B_j ) .\end{equation}
Then from~\eqref{eq:pi-def} with~\eqref{eq:E-star}, \eqref{eq:j-nucleate1}, \eqref{eq:j-nucleate4}, 
and the scaling property~\eqref{eq:nu-scaling}, 
there is a constant $C < \infty$ such that, a.s., for all $\lambda \in (0,\lambda_0]$,
\begin{equation}
\label{eq:pi-approx}
      \sup_{B_1 , \ldots, B_{I+1} } 
    \left| \sum_{j \in [I+1]} \pi_\lambda ( \cZ_I ; j  , B_j )  - \sum_{j \in [I+1]} L_j \nu ( L^3_j \lambda ; B_j) \right|
	\leq C \lambda^{3/2} M^{2-\eps} \sum_{j \in[I+1]} L_j^4  .
	\end{equation}
	 Now applying~\eqref{eq:lambda1}, we have from~\eqref{eq:pi-approx} that,
	for all $\lambda \in (0,\lambda_0]$,
	\begin{align}
	    \label{eq:pi-approx2}
	 & {} 
	  \sup_{B_1, \ldots, B_{I+1}} 
	  \left| \sum_{j \in [I+1]} \pi_\lambda ( \cZ_I ; j  , B_j )  -  \lambda  \sum_{j \in [I+1]} L_j^4 \Phi_1 (B_j) \right| \nonumber\\
	 & {} \qquad\qquad\qquad\qquad\qquad {} 	\leq C_{3} \lambda^{5/3} \sum_{j \in [I+1]} L_j^{6} + 
	C \lambda^{3/2} M^{2-\eps}  \sum_{j \in [I+1]} L_j^4 \nonumber\\
		 & {} \qquad\qquad\qquad\qquad\qquad {} \leq C \lambda^{3/2} M^{2-\eps} \sum_{j \in [I+1]} L_j^4 , 
		\end{align}
	redefining $C< \infty$ as necessary, 
		since 
$ L_j^{6}  
	  \leq  M^2 L_j^4$.
Taking all the $B_j = [0,1]$ in~\eqref{eq:pi-approx2},  and  using the fact that $\Phi_1 ([0,1]) = \mu$ (see Proposition~\ref{prop:Phi0-Phi1}), we get
	\begin{align}
	 \label{eq:Pi-approx}
\bigg| \Pi_\lambda (\cZ_I ) - \mu \lambda   \sum_{j \in [I+1]} L_j^4 \bigg| &
	 \leq C \lambda^{3/2}  M^{2-\eps} \sum_{j \in [I+1]} L_j^4 . \end{align}
		For the constants $\eps>0$ and $C<\infty$ as appearing in~\eqref{eq:pi-approx2} and~\eqref{eq:Pi-approx}, 
	let $\eps_0 = \frac{\mu}{2C}$ and 
	define the event $F := \{ \lambda^{1/2}  M^{2-\eps} \leq \eps_0 \}$.  
	Then, from~\eqref{eq:Pi-approx},	
	 \begin{equation}
	 \label{eq:Pi-on-F}
	 \Pi_\lambda (\cZ_I ) \geq \frac{\mu \lambda}{2}  \sum_{j \in [I+1]} L_j^4, \text{ on } F  .\end{equation}
	 Thus we have from~\eqref{eq:pi-approx2} and~\eqref{eq:Pi-on-F} that, for all  $\lambda \in (0,\lambda_0]$,
	 \[ 
	   \sup_{B_1, \ldots, B_{I+1}} 
	 \left| \frac{\sum_{j \in [I+1]} \pi_\lambda (\cZ_I; j, B_j)}{\Pi_\lambda (\cZ_I)} - \frac{\lambda \sum_{j \in [I+1]} L_j^4  \Phi_1 (B_j)}{ \Pi_\lambda (\cZ_I)} \right| \leq 
	C \lambda^{1/2}  M^{2-\eps} 
	, \text{ on } F .\]
	Moreover, since $\sum_{j \in [I+1]} L_j^4 \Phi_1 (B_j) \leq \mu \sum_{j \in [I+1]} L_j^4$, we have that, on $F$,
	\begin{align*} 
	\left|  \frac{\lambda \sum_{j \in [I+1]} L_j^4  \Phi_1 (B_j)}{ \Pi_\lambda (\cZ_I)} - 
	  \frac{\lambda \sum_{j \in [I+1]} L_j^4  \Phi_1 (B_j)}{  \mu \lambda \sum_{j \in [I+1]} L_j^4}
	\right|  \leq
	\left| \frac{\Pi_\lambda (\cZ_I) -  \mu \lambda \sum_{j \in [I+1]} L_j^4}{\Pi_\lambda (\cZ) } \right| 
	 \leq C \lambda^{1/2}  M^{2-\eps} ,  \end{align*}
	by~\eqref{eq:Pi-approx} and~\eqref{eq:Pi-on-F}. Combining the bounds in the last two displays
	we get
	\[  \sup_{B_1, \ldots, B_{I+1}} 
	\left| \frac{\sum_{j \in [I+1]} \pi_\lambda (\cZ_I; j, B_j)}{\Pi_\lambda (\cZ_I)} -\frac{ \sum_{j \in [I+1]} L_j^4 \Phi_1 (B_j)}{\mu \sum_{j \in [I+1]} L_j^4} \right| \leq
	C \lambda^{1/2}  M^{2-\eps}, \text{ on } F .\]
Lemma~\ref{lem:splitting-geometric}
	and the fact that  $\Phi_1(B) = \mu \Phi_0 (B)$ (Proposition~\ref{prop:Phi0-Phi1}) finish the proof.
\end{proof}

\section{Sparse deposition regime}
\label{sec:sparse}

In this section we focus on the $\lambda \to 0$ regime, and prove Theorem~\ref{thm:small-lambda}. 
Proposition~\ref{prop:splitting-distribution} refers
to the next nucleation after time $\eta_k$.
For the convergence of finite-dimensional distributions in Theorem~\ref{thm:small-lambda},
we need  to consider the next nucleation
after time $\nu_n$, the previous nucleation time.
This is the purpose of the next result.

\begin{lemma}
\label{lem:splitting-distribution-nucleations} 
For any $n \in \ZP$, we have
\[ \lim_{\lambda \to 0} \Expl 
 \sup_{B_1 , \ldots, B_{n+1} } 
\, \left| \sum_{j \in [n+1]} \Prl ( D_{\nu_n} ( j , B_j ) \mid \cF_{\nu_n} ) 
- \sum_{j \in [n+1]} \kappa_n ( \cZ_n ; j , B_j )
\right| =0.
\]
\end{lemma}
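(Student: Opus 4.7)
The main obstacle is that Proposition~\ref{prop:splitting-distribution} only estimates the splitting distribution at cycle-end times $\eta_k$ (when the system has no active particles), whereas the lemma asks for such a bound at nucleation times $\nu_n$, at which active particles may still be present. The plan is to show that with high probability as $\lambda \to 0$, these two times coincide---i.e., the $n$-th nucleation ends its cycle---and then invoke Proposition~\ref{prop:splitting-distribution} directly on this good event.

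Set $k_n := \inf\{k \in \N : I_{\eta_k} \geq n\}$, the cycle index containing $\nu_n$ (a.s.\ finite by Lemmas~\ref{lem:tau-nice} and~\ref{lem:regeneration-cycles-cover-all-time}), and let $G_n := \{A_{\nu_n} = 0\}$. Since each cycle starts with $A_{\sigma_k} = 1$ and $A$ only increases at deposition times, $G_n = \{\nu_n = \eta_{k_n}\}$. On $G_n$ we have $I_{\eta_{k_n}} = n$, $\cZ_n = \cZ_{I_{\eta_{k_n}}}$, $\cF_{\nu_n} = \cF_{\eta_{k_n}}$, and $D_{\nu_n}(j,B) = D_{\eta_{k_n}}(j,B)$. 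Applying Proposition~\ref{prop:splitting-distribution} at $k = k_n$ therefore bounds the supremum in the lemma statement by $C_2 \lambda^{1/2} M_{\nu_n}^{3/2} \leq C_2 \lambda^{1/2}$, a.s.\ on $G_n$. Since the supremum is always at most $2$ (as a difference of the total masses of two subprobability measures),
\[
\Expl \sup_{B_1, \ldots, B_{n+1}} \biggl| \sum_{j \in [n+1]} \Prl(D_{\nu_n}(j,B_j) \mid \cF_{\nu_n}) - \sum_{j \in [n+1]} \kappa_n(\cZ_n; j, B_j) \biggr| \leq C_2 \lambda^{1/2} + 2 \Prl(G_n^\rc),
\]
reducing matters to showing $\Prl(G_n^\rc) \to 0$ as $\lambda \to 0$ for each fixed $n$.

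The event $G_n^\rc$ implies $A_{\nu_n^-} \geq 3$, which in turn forces at least $3$ Poisson arrivals in cycle $k_n$. Writing $Z_k$ for the number of arrivals in cycle $k$ and using $M_{\sigma_k} \leq 1$ in Lemma~\ref{lem:regeneration-times-finite}(ii) (with the substitution $u = \delta M_{\sigma_k}^{-1} t^{1/2}$), the tail estimate gives uniformly bounded moments $\Expl[(\eta_k - \sigma_k)^3 \mid \cF_{\sigma_k}] \leq C$, so~\eqref{eq:poisson-bound} yields $\Prl(Z_k \geq 3 \mid \cF_{\sigma_k}) \leq \lambda^3 \Expl[(\eta_k-\sigma_k)^3 \mid \cF_{\sigma_k}]/3 \leq C \lambda^3$, a.s., for all $\lambda \leq \lambda_0$. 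On the other hand, by Lemma~\ref{lem:intervals-shrink} and Jensen's inequality (as in the proof of Lemma~\ref{lem:tau-nice}), for $k \leq k_n$ the per-cycle nucleation probability obeys $\Pi_\lambda(\cZ_{I_{\eta_{k-1}}}) \geq \eps_0 \lambda (I_{\eta_{k-1}}+1)^{-3} \geq \eps_0 \lambda n^{-3}$; comparing the indicators $\1\{\text{cycle $k$ has a nucleation}\}$ with independent Bernoulli($\eps_0 \lambda n^{-3}$) trials gives $\Expl[k_n] \leq n^4/(\eps_0 \lambda)$. Combining these estimates,
\[
\Prl(G_n^\rc) \leq \sum_{k \geq 1} \Expl \bigl[ \1\{k_n \geq k\} \Prl(Z_k \geq 3 \mid \cF_{\sigma_k}) \bigr] \leq C \lambda^3 \Expl[k_n] = O(n^4 \lambda^2),
\]
which vanishes as $\lambda \to 0$ and completes the reduction.
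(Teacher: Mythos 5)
Your overall strategy is sound and genuinely different from the paper's in one respect. The paper works on the event that cycle $k_n$ contains at most one nucleation, and then passes from $\cF_{\nu_n}$ to $\cF_{\eta_{k_n}}$ by the tower property (on that event the next nucleation after $\nu_n$ coincides with the next nucleation after $\eta_{k_n}$, and $I_{\eta_{k_n}}=n$); you instead restrict to the more stringent event $G_n=\{\nu_n=\eta_{k_n}\}$, on which Proposition~\ref{prop:splitting-distribution} applies verbatim with no tower-property step. Both reductions are valid, and both good events have complements whose probabilities are controlled by essentially the same mechanism (too many arrivals in the cycle containing $\nu_n$). Your explicit bound $\Expl [k_n]\le n^4/(\eps_0\lambda)$ via optional stopping is a nice quantitative substitute for the paper's step of choosing $k$ with $\Prl(k_n>k)\le\eps$; since $k_n$ grows like $\lambda^{-1}$ as $\lambda\to 0$, making the dependence explicit as you do is arguably cleaner.

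However, one step is not justified as written: the inequality $\Prl(Z_k\ge 3\mid\cF_{\sigma_k})\le\lambda^3\,\Expl[(\eta_k-\sigma_k)^3\mid\cF_{\sigma_k}]/3$. The bound~\eqref{eq:poisson-bound} applies to a Poisson variable whose mean is $\cF_{\sigma_k}$-measurable, but the cycle length $\eta_k-\sigma_k$ is itself a functional of the arrival process (conditioning on a long cycle biases towards more arrivals), so the number of arrivals in the cycle is not conditionally Poisson given the cycle length. There is also a counting slip: the arrival at $\sigma_k$ is present by definition of the cycle, so three simultaneous active particles requires only two \emph{further} arrivals in $(\sigma_k,\eta_k]$, and the per-cycle probability is of order $\lambda^2$, not $\lambda^3$. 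The standard repair --- used by the paper both in its own proof of this lemma and in~\eqref{eq:F3-prob} --- is to split at a deterministic time $t=\lambda^{-\eps}$:
\begin{equation*}
\Prl(Z_k\ge 3\mid\cF_{\sigma_k})\le\Prl(\eta_k-\sigma_k>t\mid\cF_{\sigma_k})+\Prl\bigl(\text{$\ge 2$ arrivals in }(\sigma_k,\sigma_k+t]\bigm|\cF_{\sigma_k}\bigr)\le C_1\re^{-\delta\lambda^{-\eps/2}}+\tfrac{1}{2}\lambda^{2-2\eps},
\end{equation*}
by Lemma~\ref{lem:regeneration-times-finite} (with $M_{\sigma_k}\le 1$) and~\eqref{eq:poisson-bound}. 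With this correction your final estimate becomes $\Prl(G_n^\rc)=O(n^4\lambda^{1-2\eps})$, which still vanishes as $\lambda\to 0$ for each fixed $n$, so the argument goes through.
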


For the proof of this result, and later, it is useful to define
\begin{equation}
\label{eq:k-n-def}
k_n := \min \{ k \in \ZP : \eta_k \geq \nu_n \} , \text{ for } n \in \ZP.\end{equation}
Then $k_0 = 0$, and, for all $n\in \N$,  $\sigma_{k_n} < \nu_n \leq \eta_{k_n}$
for $k_n \in \N$. Note that $\eta_{k_n}$ is a stopping time, but $\sigma_{k_n}$, $n \in \N$, is not a stopping time.

\begin{proof}[Proof of Lemma~\ref{lem:splitting-distribution-nucleations}.]
With $k_n$ as defined at~\eqref{eq:k-n-def}, we have
\begin{align*}
\Prl ( D_{\nu_n} (j , B_j ) \mid \cF_{\nu_n} ) 
& = \Expl \bigl[ \Prl ( D_{\nu_n} ( j , B_j ) \mid \cF_{\eta_{k_n}} ) \bigmid \cF_{\nu_n} \bigr] .
\end{align*}
Let $F_{k}$ be the event that there are two or more nucleations in time interval $[\sigma_k, \eta_k]$.
On $F_{k_n}^\rc$, there is no nucleation in the interval $(\nu_n, \eta_{k_n}]$, and so $D_{\nu_n} (j , B ) = D_{\eta_{k_n}} (j , B )$.
Thus
\begin{equation}
\label{eq:eta-to-nu-F}
\sup_{B_1 , \ldots, B_{n+1} } 
 \left| \sum_{j \in [n+1]} \Prl ( D_{\nu_n} ( j , B_j ) \mid \cF_{\eta_{k_n}} ) 
- \sum_{j \in [n+1]} \kappa_n ( \cZ_n ; j , B_j )  \right| \leq C_2 \lambda^{1/2} + \2 { F_{k_n} } ,\end{equation}
by Proposition~\ref{prop:splitting-distribution}  and the fact that $I_{\eta_{k_n}} = n$
on~$F_{k_n}^\rc$.
If $G_k = \cup_{i=1}^{k} F_i$, then 
\begin{align}
\label{eq:eta-to-nu}
& 
\sup_{B_1 , \ldots, B_{n+1} }  \left| \sum_{j \in [n+1]} \Prl ( D_{\nu_n} ( j , B_j ) \mid \cF_{\nu_n} ) 
- \sum_{j \in [n+1]} \kappa_n ( \cZ_n ; j , B_j )   \right| \nonumber\\
& {} \qquad\qquad\qquad\qquad\qquad \qquad \qquad \qquad  {} \leq C_2 \lambda^{1/2} + \Prl ( G_{k_n}  \mid \cF_{\nu_n} ), 
\end{align}
by~\eqref{eq:eta-to-nu-F} and the fact that $\kappa_n ( \cZ_n ; j , B )$ is $\cF_{\nu_n}$-measurable.
Next, we bound $\Prl (F_k)$ and hence $\Prl (G_{k_n})$.
In order for there to be two (or more) nucleations in time interval $[\sigma_k, \eta_k]$,
there must be at least three deposition events during time interval  $(\sigma_k, \eta_k]$. 
Let $Z$ denote the number of deposition events 
during time $(\sigma_k, \sigma_k + \lambda^{-1/6}]$. Then,  
\[ \Prl (  F_{k} \mid \cF_{\sigma_k} ) \leq \Prl ( \eta_k - \sigma_k > \lambda^{-1/6} \mid \cF_{\sigma_k} )
+  \Prl ( Z  \geq 3 \mid \cF_{\sigma_k} ) ,
\]
and, since, given $\cF_{\sigma_k}$, $Z$ is Poisson with mean $\lambda^{5/6}$,
$\Prl (Z \geq 3 \mid \cF_{\sigma_k} ) \leq \lambda^{5/2}$, by~\eqref{eq:poisson-bound}.
Together with the tail bound in Lemma~\ref{lem:regeneration-times-finite},
this shows that $\Prl (  F_{k} \mid \cF_{\sigma_k} ) \leq  C \lambda^{5/2}$,
for some $C< \infty$ and all $\lambda \in (0,1]$, say. 
For fixed $n \in \ZP$ and $\eps>0$, choose $k$ sufficiently
large so that $\Prl ( k_n > k ) \leq \eps$.
Then
\[ \Prl ( G_{k_n} ) \leq \Prl (k_n > k ) + \sum_{i=1}^k \Prl (F_i) \leq \eps + C k \lambda^{5/2} .\]
Thus, for fixed $n$ and $\eps >0$, we may choose $\lambda$ small enough so that $\Prl ( G_{k_n} ) \leq 2\eps$. Hence $\lim_{\lambda \to 0} \Prl ( G_{k_n} ) =0$.
Together with~\eqref{eq:eta-to-nu}, this completes the proof.
\end{proof}

Now we are ready to prove Theorem~\ref{thm:small-lambda}.
Recall the definition of the splitting function $\Gamma_n$ from~\eqref{eq:splitting-map},
and that, from~\eqref{eq:interval-splitting} and~\eqref{eq:kappa-def}, the interval-splitting process $\cS = (\cS_0, \cS_1, \ldots)$ with parameters $r_0$ and $\Phi_0$ has
\[ \Pr ( \cS_{n+1} \in \Gamma_n ( \cS_n ; j, B ) \mid \cS_0, \cS_1, \ldots, \cS_n ) 
= \kappa_n ( \cS_n ; j, B)  .\]
Define the transition kernel $P_n : \Delta_n \times \cB_{n+1} \to [0,1]$ by
$\Pr ( \cS_{n+1} \in A \mid \cS_0, \cS_1, \ldots, \cS_n ) = P_n ( \cS_n , A )$, 
where $A \in \cB_{n+1}$ and $\cB_n$ denotes the Borel sets on $\Delta_n$.
Then, if $\Gamma^{-1}_n (z ; j,A) := \{ v \in [0,1] : \Gamma_n (z ; j,v) \in A \}$
for $A \in \cB_{n+1}$, we have from~\eqref{eq:kappa-def} that 
\[ P_n ( z, A) = \sum_{j \in [n+1]} \kappa_n ( z ; j , \Gamma^{-1}_n (z ; j, A ) ) =  \frac{\sum_{j \in[n+1]} ( z_{j} - z_{j-1} )^4 \Phi_0 ( \Gamma^{-1}_n (z ; j, A ) ) }{\sum_{j \in [n+1]} ( z_{j} - z_{j-1} )^4} ,\]
where $z = (z_{0}, \ldots, z_{n+1} ) \in \Delta_n$.

\begin{proof}[Proof of Theorem~\ref{thm:small-lambda}.]
Define for $n \in \N$ and $A_1 \in \Delta_1, \ldots, A_n \in \Delta_n$,
\[ K_n (A_1, \ldots, A_n) := \Pr ( \cS_1 \in A_1, \ldots, \cS_n \in A_n) ,\]
where $\cS$ is the interval-splitting process with parameters $r_0$ and $\Phi_0$.
Then $K_1 (A) = P_0 ( \cZ_0, A)$ and, for $n \in \N$,
\begin{align}
\label{eq:Kn}
    K_{n+1} (A_1, \ldots, A_{n+1} ) & = \Exp \bigl[ \2 {\{ \cS_1 \in A_1 , \ldots, \cS_n \in A_n \}} \Pr ( \cS_{n+1} \in A_{n+1} \mid \cS_0, \ldots, \cS_n ) \bigr] \nonumber\\
    & =  \Exp \bigl[ \2 {\{ \cS_1 \in A_1 , \ldots, \cS_n \in A_n \}} P_n (\cS_n , A_{n+1} ) \bigr] \nonumber\\
    & = \int_{A_n} K_n (A_1, \ldots, A_{n-1}, \ud z ) P_n (z , A_{n+1} ). 
\end{align}
We wish to prove that for any $n \in \N$,
\begin{align}
\label{eq:tv-fdd}
  \lim_{\lambda \to 0}  \sup_{A_1,\ldots,A_n} \left| \Prl ( \cZ_1 \in A_1, \ldots, \cZ_n \in A_n )
    - K_n ( A_1, \ldots, A_n) \right| = 0,
\end{align}
the supremum over $A_1 \in \cB_1, \ldots, A_n \in \cB_n$.
We establish~\eqref{eq:tv-fdd} by induction on $n$. First, 
\begin{align}
\label{eq:kernel-to-kernel}
 & {}   \left| \Prl ( \cZ_{n+1} \in A \mid \cF_{\nu_n} ) - P_n (\cZ_n , A) \right|  \nonumber\\
 & {}  \quad {} 
    = \left| \sum_{j \in [n+1]} \Prl \bigl( D_{\nu_n} (j , \Gamma_n^{-1} ( \cZ_n ; j, A ) ) \bigmid \cF_{\nu_n} \bigr) - \sum_{j \in [n+1]} \kappa_n ( \cZ_n ; j,  \Gamma_n^{-1} ( \cZ_n ; j, A ) ) \right|.
\end{align}
By~\eqref{eq:kernel-to-kernel}, 
a consequence of 
Lemma~\ref{lem:splitting-distribution-nucleations} is that, for all $n \in \ZP$,
\begin{equation}
    \label{eq:induction-basis}
 \lim_{\lambda \to 0} \Expl  \sup_{A \in \cB_{n+1}} \left| \Prl ( \cZ_{n+1} 
\in A \mid \cF_{\nu_n} ) 
- P_n (\cZ_n , A ) \right| =0.
\end{equation}
In particular, taking $n=0$ in~\eqref{eq:induction-basis},
we get the $n=1$ case of~\eqref{eq:tv-fdd},
the basis for the induction.
For the inductive step, suppose that~\eqref{eq:tv-fdd} holds for some given $n \in \N$.
Then
\begin{align*}
    \Prl ( \cZ_1 \in A_1, \ldots , \cZ_{n+1} \in A_{n+1} ) & = \Expl \bigl[
    \2 {\{  \cZ_1 \in A_1, \ldots, \cZ_n \in A_n \}} \Prl ( \cZ_{n+1} \in A_{n+1} \mid \cF_{\nu_n} ) \bigr] .
\end{align*}
By~\eqref{eq:induction-basis}, it follows that
\[ \lim_{\lambda \to 0} \sup_{A_1, \ldots, A_{n+1}}  \!\!\!
\left| \Prl ( \cZ_1 \in A_1, \ldots , \cZ_{n+1} \in A_{n+1} ) \!
- \! \Expl\! \bigl[
   \2 {\{  \cZ_1 \in A_1, \ldots, \cZ_n \in A_n\}} P_n (\cZ_n, A_{n+1} ) \bigr]\! \right|\! = 0 .\]
 Now by inductive hypothesis~\eqref{eq:tv-fdd}
 and the relationship between total-variation distance and coupling, 
for any $\eps>0$ we can choose $\lambda >0$ sufficiently small, and work on  a suitable probability space
in which $\Pr ( (X_1, \ldots, X_n) \neq (Y_1, \ldots, Y_n) ) \leq \eps$ and
$(X_1, \ldots, X_n)$ has law $\Prl ( \cZ_1 \in \, \cdot \, , \ldots, \cZ_n \in \, \cdot\, )$
and $(Y_1, \ldots, Y_n)$ has law $K_n$. Hence
\[ 
\sup_{A_1,\ldots,A_{n+1}} \!\!\! \left| \Expl \bigl[
   \2 {\{  \cZ_1 \in A_1, \ldots, \cZ_n \in A_n\}} P_n (\cZ_n, A_{n+1} ) \bigr] \!
    - \! \Exp \bigl[ \2 {\{  Y_1 \in A_1, \ldots, Y_n \in A_n \} } P_n ( Y_n , A_{n+1}) \bigr]\! \right| \!
    \leq \eps ,\]
    and the expectation involving the $Y_i$s is, by~\eqref{eq:Kn}, equal to $K_{n+1} (A_1, \ldots, A_{n+1})$. This completes the inductive step.
\end{proof}

\section{Fixed-rate deposition regime}
\label{sec:fixed-rate-regime}

In this section, we will prove Theorem~\ref{thm:gap-statistics}, which says that,
roughly speaking, the long-term asymptotics of the fixed-$\lambda$
process are governed by the interval-splitting process that arises as the $\lambda\to 0$ limit
established in Theorem~\ref{thm:small-lambda}. The intuition for this is that as time goes on, the gaps get smaller
and so capture of active particles by existing islands gets faster, which has a similar effect
as driving down the deposition rate.

The proof of Theorem~\ref{thm:gap-statistics} 
 uses coupling,   based 
on the following result.

\begin{proposition}
\label{prop:fixed-lambda-approx}
For any $\lambda \in (0,\infty)$,
\[
 \Expl 
\sum_{n \in \ZP}
\sup_{B_1, \ldots, B_{n+1}} \, \left| 
\sum_{j \in [n+1]}
\Prl ( D_{\nu_n} (j, B_j) \mid \cF_{\nu_n} ) - 
\sum_{j \in [n+1]} \kappa_n (\cZ_n ; j , B_j )  
\right|    < \infty. \]
\end{proposition}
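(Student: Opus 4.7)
My strategy follows the structure of the proof of Lemma~\ref{lem:splitting-distribution-nucleations} but upgrades the qualitative $\lambda \to 0$ statement there to a quantitative estimate that is summable in $n$. Recall $k_n := \min\{k \in \ZP : \eta_k \geq \nu_n\}$ from~\eqref{eq:k-n-def}, and let $F_k$ denote the event that cycle $[\sigma_k, \eta_k]$ contains two or more nucleations. On $F_{k_n}^\rc$ no nucleation occurs in $(\nu_n, \eta_{k_n}]$, so $I_{\eta_{k_n}} = n$, $\cZ_n = \cZ_{I_{\eta_{k_n}}}$, and $D_{\nu_n}(j,B) = D_{\eta_{k_n}}(j,B)$ for every $j \in [n+1]$ and $B \in \cB$. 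Combining these identifications with the tower identity $\Prl(D_{\nu_n}(\cdot) \mid \cF_{\nu_n}) = \Expl[\Prl(D_{\nu_n}(\cdot) \mid \cF_{\eta_{k_n}}) \mid \cF_{\nu_n}]$, Proposition~\ref{prop:splitting-distribution} on $F_{k_n}^\rc$, the trivial bound $2$ on $F_{k_n}$, and conditional Jensen to push the absolute value through the inner conditional expectation, yields, for every $n \in \ZP$,
\[ \Expl \sup_{B_1,\ldots,B_{n+1}} \left| \sum_{j \in [n+1]} \Prl(D_{\nu_n}(j, B_j) \mid \cF_{\nu_n}) - \sum_{j \in [n+1]} \kappa_n(\cZ_n; j, B_j) \right| \leq C_2 \lambda^{1/2} \Expl[M_{\eta_{k_n}}^{3/2}] + 2 \Prl(F_{k_n}). \]

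Summing over $n$ and using the monotonicity $M_{\eta_{k_n}} \leq M_{\nu_n}$ (further nucleations during cycle $k_n$ only refine the partition and so cannot enlarge $M$), the proposition reduces to two summability statements: \textbf{(I)} $\sum_{n \in \ZP} \Expl[M_{\nu_n}^{3/2}] < \infty$, and \textbf{(II)} $\sum_{n \in \ZP} \Prl(F_{k_n}) < \infty$. For \textbf{(II)}, re-indexing by cycle gives $\sum_n \Prl(F_{k_n}) = \sum_{k \in \N} \Expl[N_k \2{F_k}]$, where $N_k$ counts the nucleations in cycle $k$. Applying the scaling of Lemma~\ref{lem:scaling} inside the longest gap at time $\sigma_k$ and repeating the Poisson-arrival and cycle-duration estimates from the proof of Lemma~\ref{lem:splitting-distribution-nucleations} should give a bound of the form $\Prl(F_k \mid \cF_{\sigma_k}) \leq C \lambda^{5/2} M_{\sigma_k}^{\alpha}$ for some $\alpha > 0$; a conditional Cauchy--Schwarz with the $L^2$ tail on $N_k$ furnished by Lemma~\ref{lem:regeneration-times-finite} then reduces \textbf{(II)} to a summable moment bound on $M_{\sigma_k}$, of the same nature as \textbf{(I)}.

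Estimate \textbf{(I)} is where I expect the main difficulty to lie. Since the splitting function $r_0(\ell) = \ell^4$ strongly favours the longest gaps, one heuristically expects $\Expl[M_{\nu_n}^{3/2}] = O(n^{-3/2-\delta})$ for some $\delta > 0$, which is summable. To make this rigorous I would construct a Lyapunov functional $V_n := \sum_{j \in [n+1]} L_{n,j}^q$ for a suitable $q > 1$, use Proposition~\ref{prop:splitting-distribution} to compare the one-step drift of $V_n$ under our dynamics with that under the exact $(r_0, \Phi_0)$ interval-splitting process---where a strict contraction is available by classical fragmentation arguments in the spirit of~\cite{bd1,bd2}---and then convert an appropriate moment bound on $V_n$ into the desired tail estimate for $M_{\nu_n}$. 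The subtle point is ensuring that the $O(\lambda^{1/2} M^{3/2})$ approximation error in Proposition~\ref{prop:splitting-distribution} can be absorbed into the contraction without eroding the summability margin.
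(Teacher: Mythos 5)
Your reduction is exactly the paper's: on $F_{k_n}^\rc$ one has $I_{\eta_{k_n}}=n$ and $D_{\nu_n}(j,B)=D_{\eta_{k_n}}(j,B)$, so Proposition~\ref{prop:splitting-distribution}, the trivial bound on $F_{k_n}$, the tower property, and $M_{\eta_{k_n}}\leq M_{\nu_n}$ reduce the claim to your statements \textbf{(I)} and \textbf{(II)}, which are precisely the two preparatory results the paper proves first: \textbf{(II)} is~\eqref{eq:F-k-n-summable}, obtained from Lemma~\ref{lem:multiple-nucleations} (your Poisson/Cauchy--Schwarz sketch, with the bound $C_5\lambda^{5/2}M_{\eta_k}^{13/2}$) combined with $\Expl\sum_k M_{\eta_k}^{13/2}<\infty$, and \textbf{(I)} is Lemma~\ref{lem:max-vanishes} with $\gamma=3/2>4/3$, proved by exactly the Lyapunov functional $W=\sum_j L_{n,j}^q$ you propose. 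The delicate absorption you flag is resolved in the paper because the approximation error enters the drift of $W_{\nu_n}$ as $O(\lambda^{1/2}W_{\nu_n}^{11/8})$ against a negative contribution of order $W_{\nu_n}^{4/3}$, and $4/3<11/8$, so the error is dominated once $W_{\nu_n}$ is small; the crude a priori bound needed to start this bootstrap comes not from a comparison with the exact splitting process but from the direct lower bound of Lemma~\ref{lem:intervals-shrink}.
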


To obtain Proposition~\ref{prop:fixed-lambda-approx}, we need an improved
 version of the bound in Lemma~\ref{lem:splitting-distribution-nucleations},
and this requires control of the chance
of additional nucleations occurring in time interval $(\nu_n,\eta_{k_n}]$, where $k_n$ is as defined at~\eqref{eq:k-n-def}. This is the purpose of the next lemma.

Let $\chi_k$ denote the number of nucleations during time interval $[\sigma_k,\eta_k]$.

\begin{lemma}
\label{lem:multiple-nucleations}
For any $\lambda_0 \in (0,\infty)$, there exists $C_5 = C_5 (\lambda_0) < \infty$ such that, for all $\lambda \in (0,\lambda_0]$
and all $k \in \ZP$,
\[ \Expl \bigl( \chi_{k+1} \1 { \chi_{k+1} \geq 2 } \bigmid \cF_{\eta_k} \bigr) \leq C_5 \lambda^{5/2} M_{\eta_k}^{13/2}   , \text{ $\Prl$-a.s.} \]
\end{lemma}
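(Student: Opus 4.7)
The plan is to bound $\chi_{k+1}\1{\chi_{k+1}\geq 2}$ by the second factorial $\chi_{k+1}(\chi_{k+1}-1) = 2\binom{\chi_{k+1}}{2}$, and then estimate $\Exp[\binom{\chi_{k+1}}{2}\mid\cF_{\eta_k}]$ by enumerating over pairs of distinct nucleations. Each pair of nucleations consists of four distinct particles (two pairs of two), so, with $D$ denoting the total number of depositions in the cycle $[\sigma_{k+1},\eta_{k+1}]$, one has the deterministic counting bound $\binom{\chi_{k+1}}{2}\leq 3\binom{D}{4}$ (three being the number of ways to partition a four-element set into two pairs).

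The key refinement exploits the following geometric fact: because the interior islands present at time $\eta_k$ act as absorbing barriers for Brownian motion, a deposited particle is confined for its entire life to the gap into which it was deposited (new islands arising from subsequent nucleations only subdivide these gaps further). Consequently, two particles can nucleate together only if they are deposited into the same gap. For a fixed pair of disjoint pairs $\pi=\{\{n_1,n_2\},\{n_3,n_4\}\}$ of deposition indices after $\eta_k$, the event $E_\pi$ that both pairs nucleate is contained in the event $A_\pi = \{\xi_{n_1},\xi_{n_2}\text{ in a common gap}\}\cap\{\xi_{n_3},\xi_{n_4}\text{ in a common gap}\}$; and since, conditional on $\cF_{\eta_k}$, the deposition locations are i.i.d.\ uniform on $[0,1]$ by the marked Poisson construction,
\[ \Pr(E_\pi\mid\cF_{\eta_k}) \leq \Pr(A_\pi\mid\cF_{\eta_k}) = \Bigl(\textstyle\sum_\ell L_{I_{\eta_k},\ell}^2\Bigr)^2 \leq M_{\eta_k}^2 ,\]
using $\sum_\ell L_\ell^2 \leq (\max_\ell L_\ell)\sum_\ell L_\ell = M_{\eta_k}$. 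Summing over the $3\binom{D}{4}$ pair-of-pairs yields $\Exp[\binom{\chi_{k+1}}{2}\mid\cF_{\eta_k}] \leq 3 M_{\eta_k}^2\, \Exp[\binom{D}{4}\mid\cF_{\eta_k}]$.

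To bound $\Exp[\binom{D}{4}\mid\cF_{\eta_k}]$, I would write $D=1+Y$ where $Y$ is the number of depositions in $(\sigma_{k+1},\eta_{k+1}]$; for $Y$ distributed as $\mathrm{Pois}(\lambda\tau)$ given $\tau = \eta_{k+1}-\sigma_{k+1}$, a direct factorial-moment computation gives $\Exp[\binom{D}{4}\mid\tau] = ((\lambda\tau)^4 + 4(\lambda\tau)^3)/24$, and the same asymptotic bound $\Exp[\binom{D}{4}\mid\cF_{\eta_k}] \leq C(\lambda^4 \Exp[\tau^4\mid\cF_{\eta_k}] + \lambda^3 \Exp[\tau^3\mid\cF_{\eta_k}])$ holds in general by iterating the optional-stopping identity $\Exp[\binom{N(\tau)}{k}] = \lambda\Exp[\int_0^\tau \binom{N(s)}{k-1}\,ds]$ together with Fubini. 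Applying the moment estimate $\Exp[\tau^m\mid\cF_{\eta_k}]\leq C_m M_{\eta_k}^{2m}$ obtained from the tail bound of Lemma~\ref{lem:regeneration-times-finite} (noting $M_{\sigma_{k+1}}\leq M_{\eta_k}$), we get $\Exp[\binom{D}{4}\mid\cF_{\eta_k}] \leq C(\lambda^4 M_{\eta_k}^8 + \lambda^3 M_{\eta_k}^6)$, hence
\[ \Exp\bigl[\chi_{k+1}\1{\chi_{k+1}\geq 2}\bigm|\cF_{\eta_k}\bigr] \leq C(\lambda^4 M_{\eta_k}^{10} + \lambda^3 M_{\eta_k}^{8}) .\]
Since $\lambda\leq\lambda_0$ and $M_{\eta_k}\leq 1$, we have $\lambda^4 M_{\eta_k}^{10} = \lambda^{5/2}M_{\eta_k}^{13/2}\cdot\lambda^{3/2}M_{\eta_k}^{7/2} \leq \lambda_0^{3/2}\lambda^{5/2}M_{\eta_k}^{13/2}$ and $\lambda^3 M_{\eta_k}^{8} = \lambda^{5/2}M_{\eta_k}^{13/2}\cdot\lambda^{1/2}M_{\eta_k}^{3/2}\leq \lambda_0^{1/2}\lambda^{5/2}M_{\eta_k}^{13/2}$, giving the claimed bound.

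The main obstacle is the subtle dependence between $A_\pi$ and the event ``all four particles lie in the cycle'', since both depend on the random cycle length $\eta_{k+1}$, which in turn depends on the deposition locations. The way around this is to note that the nucleation event $E_\pi$ is simply contained in $A_\pi$, so one can drop the cycle-membership constraint entirely when bounding the summand, while the deterministic counting bound $\binom{\chi_{k+1}}{2}\leq 3\binom{D}{4}$ still restricts the sum to those four-tuples which lie within the cycle. A secondary technical point is that $\tau$ is a stopping time of the full filtration rather than of the deposit Poisson process alone, so $Y\mid\tau$ is not exactly Pois$(\lambda\tau)$; but the Poisson martingale computation only requires the $k$-th factorial moment bound $\Exp[\binom{N(\tau)}{k}]\leq C\lambda^k\Exp[\tau^k]$, which is a standard consequence of optional stopping and Fubini.
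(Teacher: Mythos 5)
Your overall architecture --- bounding $\chi_{k+1}\1{\chi_{k+1}\ge 2}$ by a pair count, using the fact that islands confine each particle to the $\eta_k$-gap it lands in (so nucleating pairs must share a gap, giving $\sum_\ell L_\ell^2\le M_{\eta_k}$ per pair), and converting cycle-length tails from Lemma~\ref{lem:regeneration-times-finite} into moment bounds $\Expl(\tau^m\mid\cF_{\eta_k})\le C_m M_{\eta_k}^{2m}$ --- is a genuinely different route from the paper's, and the exponent bookkeeping at the end is fine (your target bound $\lambda^3 M^8$ is even stronger than the stated $\lambda^{5/2}M^{13/2}$). But there is a genuine gap at the central step, and you have correctly located it yourself without actually closing it. The inequality you need is $\Expl\bigl[\sum_{\pi}\1{A_\pi}\1{\pi\subseteq\text{cycle}}\mid\cF_{\eta_k}\bigr]\le M_{\eta_k}^2\,\Expl\bigl[\sum_{\pi}\1{\pi\subseteq\text{cycle}}\mid\cF_{\eta_k}\bigr]$, i.e.\ essentially $\Prl(A_\pi\mid \pi\subseteq\text{cycle})\le M_{\eta_k}^2$. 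This does not follow from the unconditional bound $\Prl(A_\pi)\le M_{\eta_k}^2$: cycle membership of a late-indexed tuple requires the cycle to survive a long time, which biases the deposition locations towards clustering in large gaps (clustered particles nucleate or survive rather than being absorbed quickly), so $A_\pi$ and $\{\pi\subseteq\text{cycle}\}$ are plausibly positively correlated --- the wrong direction. Your proposed resolution (``drop the cycle-membership constraint entirely when bounding the summand, while the counting bound still restricts the sum'') is circular: if you drop the constraint the sum over $\pi$ is infinite, and if you keep it you are back to bounding the joint probability. A sequential-conditioning argument using that $\{s_n\le\eta_{k+1}\}\in\cF_{s_n^-}$ and $\xi_n\perp\cF_{s_n^-}$ does extract one factor of $M_{\eta_k}$ (from the latest particle in the tuple), but peeling off the second factor destroys the indicator needed to keep the sum summable, so the argument as stated does not close.

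The standard repair --- and what the paper does --- is to replace the random cycle by a deterministic window $(\sigma_{k+1},\sigma_{k+1}+s]$, on which the number of arrivals and their locations \emph{are} independent given $\cF'_{\eta_k}$, and to control the event $\{\eta_{k+1}-\sigma_{k+1}>s\}$ separately via Lemma~\ref{lem:regeneration-times-finite} together with Cauchy--Schwarz and a crude second-moment bound on $\chi_{k+1}$. On the window, the paper does not match pairs at all: it splits the arrivals into those landing in gap $J$ (conditional mean $\lambda sL_J\le\lambda sM_{\eta_k}$) and those landing elsewhere (mean $\le\lambda s$), and observes that two nucleations force one of three deposition-count patterns, yielding $3\lambda^3s^3M_{\eta_k}+2\lambda^4s^4$; the single factor of $M_{\eta_k}$ comes from the thinned rate in gap $J$ rather than from gap-matching probabilities. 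Optimising $s=\lambda^{-1/6}M_{\eta_k}^{11/6}$ then gives $\lambda^{5/2}M_{\eta_k}^{13/2}$. If you run your pair-counting argument on such a deterministic window instead of the random cycle, it becomes valid and in fact yields a slightly better power of $M_{\eta_k}$; the remaining technical points you flag (factorial moments of the stopped Poisson count, $M_{\sigma_{k+1}}\le M_{\eta_k}$) are correctly identified and routine.
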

\begin{proof}
Fix $k \in \ZP$.
Write $I = I_{\eta_k}$, $J = J_k$, $M= M_{\eta_k}$, and $L_j = L_{I,j}$ for $1 \leq j \leq I+1$.
Fix $s >0$. 
During time interval  $(\sigma_{k+1}, \sigma_{k+1} + s]$,
let $Y$ denote the number of depositions in gap $J$, and let $Y'$ denote the number   elsewhere in the interval. Given $\cF'_{\eta_k}$, $Y$ and $Y'$ are independent
Poisson random variables with $\Exp ( Y \mid \cF'_{\eta_k} ) = \lambda s L_J \leq \lambda s M$
and $\Exp ( Y' \mid \cF'_{\eta_k} ) \leq \lambda s$. 
Since each nucleation consumes two active particles in the same interval, 
in order for there to be (at least) two nucleations
during time interval  $(\sigma_{k+1}, \sigma_{k+1} + s]$,
we must either have (i) at least $3$ depositions in gap~$J$, (ii) at least one deposition in gap~$J$, and at least two depositions elsewhere,
or (iii) no depositions in gap~$J$, and at least $4$ depositions elsewhere. In any case, the number of nucleations is not more than the number of depositions.
Hence
\begin{align}
\label{eq:two-nucleations}
 & {} \Expl ( \chi_{k+1} \1 { \chi_{k+1} \geq 2 , \, \eta_{k+1} - \sigma_{k+1} \leq s }  \mid \cF'_{\eta_k} ) \nonumber\\
& \leq   \Expl (  ( Y + Y' ) \1 { Y \geq 3 } \mid \cF'_{\eta_k} ) + \Expl (  ( Y + Y' ) \1 { Y \geq 1, Y' \geq 2 } \mid \cF'_{\eta_k} ) \nonumber\\
& {} \qquad {} +  \Expl (  Y'  \1 { Y' \geq 4 } \mid \cF'_{\eta_k} ) 
\nonumber\\
& \leq 
\Expl ( Y \1 { Y \geq 3 } \mid \cF'_{\eta_k} ) + \Expl ( Y' \mid \cF'_{\eta_k} ) \Prl ( Y \geq 3 \mid \cF'_{\eta_k} ) \nonumber\\
& {} \qquad {} + 2 \Expl (    Y \1 { Y \geq 1 } \mid \cF'_{\eta_k} ) \Expl ( Y' \1 { Y' \geq 2 } \mid \cF'_{\eta_k} ) +  \Expl (  Y'  \1 { Y' \geq 4 } \mid \cF'_{\eta_k} ) \nonumber\\
& \leq ( \lambda s M)^3 + \lambda s \cdot ( \lambda s M)^3 + 2 \lambda s M \cdot (\lambda s)^2 + (\lambda s)^4 \nonumber\\
& \leq 3 \lambda^3 s^3 M + 2 \lambda^4 s^4,  \end{align}
by~\eqref{eq:poisson-bound} and the fact that $M \leq 1$. 
On the other hand, by Cauchy--Schwarz,
\[ \Expl \bigl( \chi_{k+1} \1 { \eta_{k+1} - \sigma_{k+1} > s } \bigmid \cF_{\eta_k} \bigr)
\leq \bigl( \Expl ( \chi_{k+1}^2 \mid \cF_{\eta_k} ) \bigr)^{1/2} \bigl( \Prl ( \eta_{k+1} - \sigma_{k+1} > s \mid \cF_{\eta_k} ) \bigr)^{1/2} .\]
If $Y''$ denotes the number of Poisson arrivals during time interval $(\sigma_{k+1}, \sigma_{k+1} + x ]$,
then  
\begin{align*}
 \Prl ( \chi_{k+1} \geq 3 \lceil \lambda_0 x \rceil   \mid \cF_{\eta_k} ) & \leq 
\Pr ( \eta_{k+1} - \sigma_{k+1} \geq x \mid \cF_{\eta_k} )
+ \Pr ( Y'' \geq  3 \lceil \lambda_0 x \rceil    \mid \cF_{\eta_k} ) . \end{align*} 
The Poisson variable $Y''$ has $\Expl ( \re^{Y''} \mid \cF_{\eta_k} ) \leq \re^{2 \lambda_0 x}$, a.s., so,
 by Lemma~\ref{lem:regeneration-times-finite} and
 Markov's inequality,
$\Prl ( \chi_{k+1} \geq 3 \lceil \lambda_0 x \rceil   \mid \cF_{\eta_k} )  \leq 
C_1 \exp ( - c x^{1/2} ) +   \exp ( - \lambda_0 x )$, where $c >0$.
It follows that $\Expl ( \chi_{k+1}^2 \mid \cF_{\eta_k} ) \leq C$
for some $C < \infty$. With Lemma~\ref{lem:regeneration-times-finite}, this shows that
\[ \Expl \bigl( \chi_{k+1} \1 { \eta_{k+1} - \sigma_{k+1} > s } \bigmid \cF_{\eta_k} \bigr)
\leq C \exp ( - c M^{-1} s^{1/2} ) ,\]
where the constants $C < \infty$ and $c >0$ depend on $\lambda_0$.
Thus we obtain
\[ \Expl \bigl( \chi_{k+1} \1 { \chi_{k+1} \geq 2} \bigmid \cF_{\eta_k} \bigr) \leq 3 \lambda^3 s^3 M + 2 \lambda^4 s^4 + C \exp ( - c M^{-1} s^{1/2} ) ,\]
provided $\lambda  \leq \lambda_0$. 
Taking $s =  \lambda^{-1/6} M^{11/6}$ we get the result.
\end{proof}

The next result shows that $M_{\nu_n} \to 0$, a.s., and gives some quantification of the rate. One expects that $M_{\nu_n}$, the
length of the largest gap when there are $n$ interior islands, is not much greater than~$1/n$,
and Lemma~\ref{lem:max-vanishes} is, in a rough sense, a bound of $O (n^{\eps-(3/4)})$.
On the basis of the upper tail of $g_0$ in Theorem~\ref{thm:gap-statistics},
we conjecture that the correct order for~$M_{\nu_n}$ is $( \log n)^{1/4} / n$.

\begin{lemma}
\label{lem:max-vanishes}
For any $\lambda \in (0,\infty)$ and any $\gamma > 4/3$, we have 
$\Expl \sum_{n =0}^\infty M_{\nu_n}^\gamma < \infty$.
\end{lemma}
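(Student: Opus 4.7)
The plan is to reduce to the moment estimate $\Expl S_n^{(4)} \leq C n^{-3}$, where $S_n^{(p)} := \sum_{j \in [n+1]} L_{n,j}^p$, and deduce the lemma via two elementary inequalities. By the power-mean inequality, $M_{\nu_n}^4 \leq S_n^{(4)}$; hence, for $\gamma \in (4/3,4]$, Jensen's inequality applied to the concave function $x \mapsto x^{\gamma/4}$ gives
\[
\Expl M_{\nu_n}^\gamma \leq \Expl (S_n^{(4)})^{\gamma/4} \leq (\Expl S_n^{(4)})^{\gamma/4} \leq C^{\gamma/4} n^{-3\gamma/4},
\]
which is summable in $n$ since $3\gamma/4 > 1$; for $\gamma > 4$, use $M_{\nu_n}^\gamma \leq M_{\nu_n}^4 \leq S_n^{(4)}$ and the summability of $n^{-3}$. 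In both ranges the lemma then follows.

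To prove $\Expl S_n^{(4)} \leq C n^{-3}$, the strategy is a Foster--Lyapunov analysis of $S_n^{(4)}$ using Proposition~\ref{prop:splitting-distribution}. Under the splitting approximation, a short calculation using the splitting map~\eqref{eq:splitting-map} gives
\[
\Expl [ S_{n+1}^{(4)} - S_n^{(4)} \mid \cF_{\nu_n} ] = c_4 \, \frac{S_n^{(8)}}{S_n^{(4)}} + R_n, \qquad c_4 := \int_0^1 (v^4 + (1-v)^4 - 1)\, \ud \Phi_0(v) \in (-1,0),
\]
with a remainder $R_n$ bounded by $C \lambda^{1/2} M_{\nu_n}^{11/2}$ (combining the per-step TV estimate $C \lambda^{1/2} M^{3/2}$ from Proposition~\ref{prop:splitting-distribution} with the fact that $|\Delta S_n^{(4)}| \leq M_{\nu_n}^4$ per nucleation), plus a further higher-order correction coming from multi-nucleation cycles via Lemma~\ref{lem:multiple-nucleations}. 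The naive power-mean bound $S_n^{(8)} \geq (S_n^{(4)})^2/(n+1)$ only yields $\Expl S_n^{(4)} = O(n^{-|c_4|})$ through a standard supermartingale argument, and since $|c_4| < 1$, this falls short of the target $n^{-3}$.

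The main obstacle is thus to sharpen the ratio $S_n^{(8)}/S_n^{(4)}$ enough to close the supermartingale estimate at the exponent $3$. I would do this by tracking the joint evolution of $S_n^{(p)}$ for several values of $p$ (e.g.\ $p \in \{2,4,6,8\}$) in the form of coupled moment inequalities obtained from the same splitting-kernel identity. Asymptotically, these force the normalised moments $m_p := \lim_n (n+1)^{p-1} \Expl S_n^{(p)}$ (when they exist) to satisfy the closure $m_{p+4} = -(p-1) m_p m_4 / c_p$; in particular, $m_8 = 3 m_4^2/|c_4|$ at $p=4$, which is precisely the relation needed to render $(n+1)^3 S_n^{(4)}$ an approximate supermartingale and deliver the target bound. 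Handling the splitting-approximation remainder $R_n$ and the emergence of these limit moments in a non-circular way (so as to keep this lemma logically prior to Proposition~\ref{prop:fixed-lambda-approx}) will be the most delicate part, and I would follow a Brennan--Durrett-type analysis~\cite{bd1,bd2} adapted to the current setting, using a bootstrap that begins with the weak $O(n^{-|c_4|})$ bound and iteratively refines the coupled moment estimates.
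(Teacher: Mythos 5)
Your reduction step is fine: if one had $\Expl S_n^{(4)} \leq C n^{-3}$ then Jensen (for $\gamma\le 4$) and monotonicity (for $\gamma>4$) would give the lemma exactly as you say. The genuine gap is that you do not prove this moment bound, and the route you sketch for it does not work as stated. As you yourself compute, the one-step drift $\Expl[\Delta S_n^{(4)}\mid\cF_{\nu_n}]\approx c_4\,S_n^{(8)}/S_n^{(4)}$ combined with the Cauchy--Schwarz bound $S_n^{(8)}\ge (S_n^{(4)})^2/(n+1)$ only yields $\Expl S_n^{(4)}=O(n^{-|c_4|})$ with $|c_4|=1-2m_4\approx 0.76$, far short of $n^{-3}$. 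The proposed repair --- coupled recursions for $S_n^{(p)}$, $p\in\{2,4,6,8\}$, a ``closure'' relation for the limits $m_{p+4}$, and a bootstrap --- is a heuristic, not an argument: the hierarchy of moment equations does not close (controlling $\Expl[S_n^{(8)}/S_n^{(4)}]$ from below requires joint information that $\Expl S_n^{(4)}$ and $\Expl S_n^{(8)}$ separately do not provide, and expectations of ratios are not controlled by ratios of expectations in the direction you need), and no mechanism is given by which the bootstrap improves the exponent from $|c_4|$ towards $3$. You also omit the ingredient that makes the whole analysis non-circular, namely the unconditional lower bound of Lemma~\ref{lem:intervals-shrink} on the probability of nucleating in gap $j$ at relative location in $[1/8,7/8]$, which is what gives a usable negative drift before any splitting-kernel approximation is invoked.

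The paper's proof shows that no pointwise-in-$n$ rate for $\Expl S_n^{(4)}$ is needed. Writing $W_{\nu_n}=S_n^{(4)}$, the key observation is the \emph{pathwise} Jensen bound $S_n^{(8)}=\sum_i L_{n,i}(L_{n,i}^3)^{7/3}\ge (S_n^{(4)})^{7/3}$ (using $\sum_i L_{n,i}=1$), so the drift is $\le -\eps W_{\nu_n}^{4/3}$ rather than $\le -\eps W_{\nu_n}/(n+1)$: the drift is superlinear in $W$ itself and makes no reference to $n$. One then applies the concave map $x\mapsto x^\beta$, $\beta\in(0,1)$, to get $\Expl[W_{\nu_{n+1}}^\beta-W_{\nu_n}^\beta\mid\cdot]\le-\beta\eps W_{\nu_n}^{\beta+1/3}$ (up to error terms controlled via Lemma~\ref{lem:multiple-nucleations} and a preliminary cycle-indexed bound $\Expl\sum_k M_{\eta_k}^{4+\alpha}<\infty$ obtained from Lemma~\ref{lem:intervals-shrink}), and telescopes to conclude $\sum_n\Expl W_{\nu_n}^{\beta+1/3}<\infty$; since $M_{\nu_n}^4\le W_{\nu_n}$ this gives $\Expl\sum_n M_{\nu_n}^{4\beta+4/3}<\infty$, i.e.\ the claim for every $\gamma>4/3$. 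If you want to salvage your outline, replace the moment-recursion programme by this Lyapunov--concave-transform argument; the rest of your reduction is then unnecessary.
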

\begin{proof}
Consider the process $(W_t , t \in \RP)$ defined by
\[ W_t  = \sum_{i \in [I_t+1]} L^\alpha_{I_t,i} ,\]
where $\alpha > 1$. Note that $0 < W_t \leq W_s \leq W_0 =1$ for $0 \leq s \leq t < \infty$.
Lemma~\ref{lem:intervals-shrink} says
\[ 
\Prl ( \{ \alpha_k = 1\} \cap D_{\eta_k} (j,B_0) \mid \cF_{\eta_k} ) \geq \eps_0   L_{I_{\eta_k},j}^4 ,\]
where $B_0 = [\frac{1}{8},\frac{7}{8}]$ and $\eps_0 = \eps_0 (\lambda) >0$. On $\{ \alpha_k = 1\} \cap D_{\eta_k} (j,B_0)$, nucleation
occurs during time interval $[\eta_k,\eta_{k+1}]$ 
at relative location $v \in B_0$
 in gap $j \in [I_{\eta_k} +1]$, and any subsequent nucleation
before time $\eta_{k+1}$ only decreases $W_{\eta_{k+1}}$.
Hence,
on $\{ \alpha_k = 1\} \cap D_{\eta_k} (j,B_0)$,
\begin{equation}
\label{eq:W-change}
 W_{\eta_{k+1}} - W_{\eta_k} \leq  - \inf_{v \in B_0} \Lambda ( v, \alpha) L_{I_{\eta_k},j}^\alpha , \text{ where } \Lambda (v,\alpha ) := 1 - v^\alpha - (1-v)^\alpha .\end{equation}
 Here $\inf_{v \in B_0} \Lambda ( v, \alpha)  = \delta >0$
 depending only on $\alpha >1$. From Lemma~\ref{lem:intervals-shrink} with~\eqref{eq:W-change}, 
\begin{align}
\label{eq:W-increment}
 \Expl ( W_{\eta_{k+1}} - W_{\eta_k} \mid \cF_{\eta_k} )   \leq - \delta \eps_0  \sum_{i \in [I_{\eta_k}+1]}  L_{I_{\eta_k},i}^{4+\alpha}  
  \leq - \delta \eps_0   M_{\eta_k}^{4+\alpha} .
\end{align}
Taking expectations and summing, since $W_0 = 1$, we obtain, for every~$\alpha >1$,
\begin{align}
\label{eq:M-first-bound}    \Expl \sum_{k \in \ZP} M_{\eta_{k}}^{4+\alpha} \leq  \frac{1}{\delta \eps_0 } < \infty .\end{align}
In particular~\eqref{eq:M-first-bound} shows that $\lim_{t \to \infty} M_t = 0$, a.s.
As in the proof of Lemma~\ref{lem:splitting-distribution-nucleations},
let $F_k = \1 { \chi_k \geq 2 }$, the event that there are two or move nucleations during $[\sigma_k,\eta_k]$.
Recall the definition of $k_n$ at~\eqref{eq:k-n-def}.
From Lemma~\ref{lem:multiple-nucleations} and~\eqref{eq:M-first-bound} (take $\alpha = 5/2$), we have
\begin{equation}
\label{eq:F-k-n-summable}
\sum_{n \in \N} \Prl ( F_{k_n} )
= \sum_{k \in \N } \Expl \sum_{n : k_n = k} \1 { \chi_k \geq 2 }
= \Expl \sum_{k \in \N}  \chi_k \1 { \chi_k \geq 2 } 
 < \infty ,\end{equation}
since $k_n = k \in \N$ if and only if $\sigma_k \leq \nu_n \leq \eta_k$.

Now we extend the argument to get the statement in the lemma.  Take $\alpha = 4$ in the definition of~$W_t$. 
For $k \in \ZP$ let $\rho_k = \min \{ n \in \N : \nu_n > \eta_k \}$.
As above, we have that $\Lambda ( v, 4) \geq \delta >0$ for all $v \in B_0$.
On the event $D_{\eta_k} (j , B_0 )$, we have $W_{\nu_{\rho_k}} - W_{\eta_k} \leq - \delta L_{I_{\eta_k,j}}^4$. 
If $\nu_1$ and $\nu_2$ are finite measures on a countable set $S$, 
then, for $A = \{ j \in S : \nu_1 (j) \geq \nu_2 (j) \}$, supposing, without loss of generality, that $\nu_1(S) \geq \nu_2 (S)$,
\begin{equation}
    \label{eq:total-variation}
 \sum_{j \in S} \bigl| \nu_1 (j) - \nu_2(j) \bigr| = 2 ( \nu_1 (A) - \nu_2 (A) ) + \nu_2 (S) - \nu_1 (S) \leq 2 \sup_{\cJ \subseteq S} \bigl| \nu_1 (\cJ) - \nu_2 (\cJ) \bigr| . \end{equation}
From Proposition~\ref{prop:splitting-distribution}, taking $B_j = B$ for all $j \in \cJ$ and $B_j = \emptyset$ for $j \notin \cJ$,  we obtain
\[ \sup_{B \in \cB} \left| \sum_{j \in \cJ} \Prl ( D_{\eta_k} (j, B ) \mid \cF_{\eta_k} ) - \sum_{j \in \cJ} \kappa_{I_{\eta_k}} ( \cZ_{I_{\eta_k}} ; j, B ) \right| \leq C_2 \lambda^{1/2} M_{\eta_k}^{3/2}. \] 
Then using~\eqref{eq:kappa-def}
and~\eqref{eq:total-variation}, it follows that
\[ \sum_{j \in [I_{\eta_k}+1]} \left| \Prl ( D_{\eta_k} (j , B_0 ) \mid \cF_{\eta_k} ) L_{I_{\eta_k},j}^4
- \frac{L_{I_{\eta_k,j}}^8 \Phi_0 (B_0)}{\sum_{i \in[I_{\eta_k}+1]} L_{I_{\eta_k},i}^4}
\right| \leq 2 C_{2} \lambda^{1/2} M_{\eta_k}^{11/2} .\]
Consequently, for $\eps = \Phi_0 (B_0) > 0$,
\begin{align*} 
\Expl \bigl( W_{\nu_{\rho_k}} - W_{\eta_k} \bigmid \cF_{\eta_k} \bigr)
& \leq 2 C_{2} \lambda^{1/2} W_{\eta_k}^{11/8} - \delta \eps \frac{\sum_{i \in [I_{\eta_k} +1]} L_{I_{\eta_k},i}^{8}}
{\sum_{i \in [I_{\eta_k} +1]} L_{I_{\eta_k},i}^{4}}
.\end{align*}
Let $A_n$ denote the event that $\nu_{n+1} > \eta_{k_n}$; 
on $A_n$, $I_{\eta_{k_n}} = I_{\nu_n} = n$ and $\rho_{k_n} = n+1$.
Then taking $k = k_n$ (noting that $\eta_{k_n}$ is a stopping time) and using the monotonicity of $W_t$,  
\begin{align*} 
\Expl \bigl( W_{\nu_{n+1}} - W_{\nu_n} \bigmid \cF_{\eta_{k_n}} \bigr)
& \leq 
\Expl \bigl( W_{\nu_{\rho_{k_n}}} - W_{\eta_{k_n}} \bigmid \cF_{\eta_{k_n}} \bigr) \2 { A_n }  \\
& \leq 2 C_{2} \lambda^{1/2} W_{\nu_n}^{11/8} - \delta \eps \frac{\sum_{i \in [n +1]} L_{n,i}^{8}}
{\sum_{i \in [n +1]} L_{n,i}^{4}} \2 {A_n }
. \end{align*}
Since $\sum_{i\in [n+1]} L_{n,i} =1$, Jensen's inequality gives
$\sum_{i \in [n+1]} L_{n,i}^8 \geq W_{\nu_n}^{7/3}$. Hence
\[ \Expl ( W_{\nu_{n+1}} - W_{\nu_n} \mid \cF_{\eta_{k_n}}  )  \leq 
C W_{\nu_n}^{11/8}  
-  \delta \eps W_{\nu_n}^{4/3}  \2 {A_n } , \]
where $C< \infty$. Since $4/3 < 11/8$, and $W_{\nu_{n+1}} \leq W_{\nu_n} \leq 1$,
 there exists $\eps>0$ such that a.s., 
\begin{equation}
\label{eq:W-n-drift}
  \Expl ( W_{\nu_{n+1}} - W_{\nu_n} \mid \cF_{\eta_{k_n}}  )  \leq 
   -  \eps W_{\nu_n}^{4/3} \1 { W_{\nu_n} < \eps } \2 {A_n } . \end{equation}
Let $\beta \in (0,1)$. Then $(1+x)^\beta \leq 1 + \beta x$ for all $ x \in [-1,0]$, so
\begin{align*}
\Expl ( W_{\nu_{n+1}}^\beta - W_{\nu_n}^\beta \mid \cF_{\eta_{k_n}}  ) 
& = W_{\nu_n}^\beta \Expl \biggl[   \Bigl(1 + \frac{W_{\nu_{n+1}} - W_{\nu_n}}{W_{\nu_n}} \Bigr)^\beta  - 1 \biggmid \cF_{\eta_{k_n}}  \biggr] \\
& \leq \2 {A^\rc_n }  - \beta \eps W_{\nu_n}^{\beta+(1/3)} \1 { W_{\nu_n} < \eps } , \end{align*}
by~\eqref{eq:W-n-drift}.
Taking expectations, summing, and using the fact that $W_0 = 1$, we get
\[ \beta \eps \sum_{m=0}^{n-1} \Expl \bigl( W_{\nu_m}^{\beta+(1/3)} \1 { W_{\nu_m} < \eps } \bigr) \leq 1 + \sum_{m=0}^{n-1} \Prl ( A_m^\rc) , \text{ for all } n \in \N.\]
Now $A_n^\rc \subseteq F_{k_n}$, so, by~\eqref{eq:F-k-n-summable}, $\sum_{n=0}^\infty \Prl ( A_n^\rc) < \infty$. Thus
\begin{equation}
\label{eq:W-n-beta-sum1}
 \sum_{n =0}^\infty \Expl \bigl( W_{\nu_n}^{\beta+(1/3)} \1 { W_{\nu_n} < \eps } \bigr) < \infty .\end{equation}
On the other hand, since $W_{\nu_n} \in [0,1]$ and $\beta >0$, 
\begin{align*}
\sum_{n =0}^\infty \Expl \bigl( W_{\nu_n}^{\beta+(1/3)} \1 { W_{\nu_n} \geq  \eps } \bigr) 
&  \leq \eps^{-2} \sum_{n =0}^\infty \Expl \bigl( W_{\nu_n}^{7/3} \bigr)  \\
& \leq \eps^{-2} \Expl \sum_{k \in \ZP} \sum_{n: k_n = k}  \bigl( W^{7/3}_{\eta_k} \1 { \chi_k = 1} + \1 { \chi_k \geq 2 } \bigr) ,\end{align*}
since $W_{\nu_n} = W_{\eta_{k_n}}$ on $\{ \chi_{k_n} = 1 \}$. Here we have that
\begin{align*}
  \Expl \sum_{k \geq 1} \sum_{n: k_n = k} \bigl( W^{7/3}_{\eta_k} \1 { \chi_k = 1} + \1 { \chi_k \geq 2 } \bigr)
	& \leq \Expl \sum_{k \geq 1} W_{\eta_k}^{7/3} + \Expl \sum_{k \geq 1} \chi_k \1{ \chi_k \geq 2} \\
	& \leq \Expl \sum_{k \geq 1} M_{\eta_k}^{7} + C \Expl \sum_{k \geq 1} M_{\eta_k}^{13/2} ,
\end{align*}
	by Lemma~\ref{lem:multiple-nucleations} and the fact that $W_{\eta_k} \leq M_{\eta_k}^3$.
	Then, by~\eqref{eq:M-first-bound}, since $7 > \frac{13}{2} > 5$,
\begin{equation}
\label{eq:W-n-beta-sum2}
\sum_{n =0}^\infty \Expl \bigl( W_{\nu_n}^{\beta+(1/3)} \1 { W_{\nu_n} \geq  \eps } \bigr) < \infty .\end{equation}
	Combining~\eqref{eq:W-n-beta-sum1} and~\eqref{eq:W-n-beta-sum2}, since $W_{\nu_n} \geq M_{\nu_n}^4$  we conclude
\[ \Expl \sum_{n \in \ZP} M_{\nu_n}^{4\beta + (4/3)} \leq   \Expl \sum_{n \in \ZP} W_{\nu_n}^{\beta + (1/3)} < \infty ,   \]
which gives the result, since $\beta \in (0,1)$ was arbitrary.
\end{proof}

Now we can give the proof of Proposition~\ref{prop:fixed-lambda-approx}.

\begin{proof}[Proof of Proposition~\ref{prop:fixed-lambda-approx}.]
On $F^\rc_{k_n}$,  $I_{\eta_{k_n}} = I_{\nu_n} = n$, and so, by Proposition~\ref{prop:splitting-distribution},
\[\sup_{B_1 , \ldots, B_{n+1} } \, \left| 
\sum_{j \in [n+1]}
\Prl ( D_{\nu_n} (j, B_j ) \mid \cF_{\eta_{k_n}} )   
- \sum_{j \in [n+1]} \kappa_n (\cZ_n ; j , B_j )
\right| \leq \2 { F_{k_n} } +
C_{2} \lambda^{1/2} M^{3/2}_{\eta_{k_n}} .	\] 
Since $M_{\eta_{k_n}} \leq M_{\nu_n}$, it follows on taking conditional expectations given~$\cF_{\nu_n}$ that
\begin{align*}
& {} \sup_{B_1 , \ldots, B_{n+1} } \, \left| 
\sum_{j \in [n+1]}
\Prl ( D_{\nu_n} (j, B_j) \mid \cF_{\nu_n} ) - 
\sum_{j \in [n+1]}
\kappa_n (\cZ_n ; j , B_j ) 
\right| \\
& {} \qquad 
 \qquad 
  \qquad 
   \qquad 
    \qquad {}
 \leq \Prl ( F_{k_n} \mid \cF_{\nu_n} ) +
C_{2} \lambda^{1/2} M^{3/2}_{\nu_n} .	\end{align*}
Here we know from Lemma~\ref{lem:max-vanishes} that $\Expl \sum_{n\in \ZP}  M^{3/2}_{\nu_n} < \infty$,  and we know from~\eqref{eq:F-k-n-summable} that $\sum_{n \in \ZP} \Prl ( F_{k_n}  ) < \infty$. 
The result follows. 
\end{proof}

Finally we present the proof of Theorem~\ref{thm:gap-statistics}. This is based on a coupling argument,
using Proposition~\ref{prop:fixed-lambda-approx},
together with appropriate asymptotic results for interval-splitting processes,
which we defer to~\S\ref{sec:gap-tails}.

\begin{proof}[Proof of Theorem~\ref{thm:gap-statistics}.]
Recall from the proof of Theorem~\ref{thm:small-lambda} that $P_n ( \cS_n , A) = \Pr ( \cS_{n+1} \in A \mid \cS_0, \ldots, \cS_n )$, $A \in \cB_{n+1}$,
is the kernel associated with the 
the interval-splitting process $\cS = (\cS_0, \cS_1, \ldots)$ with parameters $r_0$ and $\Phi_0$.
 Proposition~\ref{prop:fixed-lambda-approx} with~\eqref{eq:kernel-to-kernel} shows that
\begin{equation}
\label{eq:fixed-lambda-approx} \Expl 
\sum_{n \in \ZP} \sup_{ A \in \cB_{n+1}}  \, \bigl| \Prl ( \cZ_{n+1} \in A \mid \cF_{\nu_n} ) - P_n (\cZ_n, A) \bigr| < \infty .\end{equation}
Let $\cF^{\cZ}_n := \sigma (\cZ_0, \cZ_1, \ldots, \cZ_n)$.
Since
$\Prl ( \cZ_{n+1} \in A \mid \cF^{\cZ}_n ) = \Expl [ 
\Prl (\cZ_{n+1} \in A \mid \cF_{\nu_n} ) \mid \cF^{\cZ}_n ]$,
\begin{align}
\label{eq:fixed-lambda-approx-Z}
&  {} \Expl 
\sum_{n \in \ZP} \sup_{ A \in \cB_{n+1}} \, \bigl| \Prl ( \cZ_{n+1} \in A \mid \cF^{\cZ}_{n} ) - P_n (\cZ_n, A) \bigr| \\
& {} \quad {} \leq \Expl 
\sum_{n \in \ZP} \sup_{ A \in \cB_{n+1}} \Expl \Bigl[ \bigl| \Prl ( \cZ_{n+1} \in A \mid \cF_{\nu_n} ) - P_n (\cZ_n, A) \bigr| \Bigmid \cF_n^{\cZ} \Bigr]  < \infty
 ,
\nonumber
\end{align}
 by~\eqref{eq:fixed-lambda-approx}. 
Fix $n_0 \in \ZP$. We couple $(\cZ_0,\cZ_1,\ldots)$ and $(\cS_{n_0}, \cS_{n_0+1}, \ldots)$, an interval-splitting process with parameters $r_0$ and $\Phi_0$ and initial configuration $\cS_{n_0} = \cZ_{n_0}$.
On a common probability space, where we still denote probabilities by $\Prl$,
construct $(\cZ_0, \ldots, \cZ_{n_0})$ according to the law
given by $\Prl$, and then set $\cS_{n_0} = \cZ_{n_0}$.
Let $n \geq n_0$. 
Let $\cF_n^{\cZ,\cS} = \sigma (\cZ_0,  \ldots, \cZ_n, \cS_{n_0} , \ldots , \cS_n)$.
Given $\cF_n^{\cZ,\cS}$, 
if $\cZ_n \neq \cS_n$, then generate $\cS_{n+1}$ independently of $\cZ_{n+1}$ according to $P_n (\cS_n , \, \cdot \,)$.
If $\cZ_n = \cS_n$, then 
generate $(\cZ_{n+1}, \cS_{n+1})$ by maximal coupling of
 $\Prl ( \cZ_{n+1} \in \, \cdot \, \mid \cF^{\cZ}_{n} )$
and $P_n (\cZ_n , \, \cdot \,)$.
Then
\begin{align}
\label{eq:coupling-estimate}
   \Prl (  \cZ_{n} = \cS_{n} \text{ for all } n \geq n_0  ) \geq 1 - \eps_{n_0} ,\end{align}
where 
\begin{align*}
\eps_{n_0} = \Expl  \sum_{n \geq n_0} \sup_{ A \in \cB_{n+1}} \left| \Prl ( \cZ_{n+1} \in A \mid \cF^{\cZ}_{n} ) - P_n (\cZ_n, A) \right|   .\end{align*}
By~\eqref{eq:fixed-lambda-approx-Z}, for any $\eps>0$ we can choose $n_0$ large enough  that 
$\eps_{n_0} < \eps$; fix such an $n_0$.

We apply Theorem~\ref{thm:general-splitting-distribution} to the interval-splitting
process with parameters $r_0$ and $\Phi_0$;  in the hypotheses
 we take $\alpha = 4$ and $\beta =2$,
using Lemma~\ref{lem:psi-near-zero} for the behaviour of $\phi_0$~near zero.
Theorem~\ref{thm:general-splitting-distribution}(i) shows that
$\Prl ( n^{-1} C_n (x) \to x \mid \cS_{n_0} = z) = 1$ for all $z$, and hence
$\Prl ( n^{-1} C_n (x) \to x \mid \cF_{n_0}^{\cZ,\cS} ) = 1$, a.s.
On the event $ \cZ_{n} = \cS_{n} \text{ for all } n \geq n_0$, we have $n^{-1} | N_n (x) - C_n (x) | \to 0$, a.s.
Hence
\begin{align*}
 \Prl ( n^{-1} N_n (x) \to x \mid \cF_{n_0}^{\cZ,\cS} ) 
& \geq \Prl ( n^{-1} C_n (x) \to x, \,  \cZ_{n} = \cS_{n} \text{ for all } n \geq n_0 \mid \cF_{n_0}^{\cZ,\cS} ) \\
& = \Prl (  \cZ_{n} = \cS_{n} \text{ for all } n \geq n_0 \mid \cF_{n_0}^{\cZ,\cS} ) .\end{align*}
Taking expectations and using~\eqref{eq:coupling-estimate}, we get
$\Prl ( n^{-1} N_n (x) \to x ) \geq 1 - \eps_{n_0} \geq 1 - \eps$.
Since $\eps >0$ was arbitrary, we establish part~(i) of the theorem. 
The a.s.~convergence in part~(ii) is deduced from Theorem~\ref{thm:general-splitting-distribution}(ii) in a similar way,
and   convergence of $\Prl ( \tilde L_n \leq x) = \Expl \cE_n (x)$ follows from the bounded convergence theorem.
The  asymptotics for $g_0$ follow from  Theorem~\ref{thm:general-splitting-distribution}(iii), noting that
there $\alpha = 4$, $\beta =2$, and $a=0$.
\end{proof}

\section{Brownian motion exiting a right-angled triangle}
\label{sec:triangle}

This section provides a proof of Proposition~\ref{prop:Phi0-Phi1}.
 Recall the notation $S =
\partial[0,1]^2$ and $D = \{(x,y) \in [0,1]^2 : x=y\}$ for the boundary and diagonal of
the unit square, and the definition of $H(u,v;B)$ from~\eqref{eq:H-def}.  
Let $U := \{(x,y) \in [0,1]^2 : x\geq y\}$,
the right-angled triangle with side-lengths $1,1,\sqrt{2}$. Then for $B \in \cB$ we can write
\begin{align*} H(u,v;B) = \Pr ( \tau_{\partial U} = \tau_{D} , \, W^{(1)}_{\tau_{D}} \in B \mid W_0 = (u,v) ) , \text{ for } (u,v) \in U;
\end{align*}
 the symmetry $H(u,v;B) = H(v,u;B)$ gives $H(u,v;B)$ for all $(u,v) \in [0,1]^2$. 
An old result of Smith \& Watson~\cite{sw} states that the probability that planar Brownian
motion started from a \emph{uniform random} point in $U$ exits via the diagonal is given by
\begin{align}
\label{eq:sw}   2 \int_U H(u,v;[0,1]) \ud u \ud v = \int_{[0,1]^2} H(u,v;[0,1]) \ud u \ud v  & =   1 - \frac{16}{\pi^3}\sum_{\text{$n$ odd}} \frac{\coth(
    n\pi /2)}{n^3} \\
		& \approx 0.41063 \text{ to 5 decimal places.} \nonumber \end{align}
		(The term $(\coth(3\pi/2)-1)/9$ on p.~484 of~\cite{sw} should be $(\coth(3\pi /2)-1)/27$,
which leads to an error in the 5th decimal place of their numerical approximation.)
The method of~\cite{sw} could be adapted to find $\int_0^1 \int_0^1  H(u,v;B) \ud u \ud v$,
but we want to evaluate $H(u,v;B)$ integrated against a different measure, as in~\eqref{eq:Phi1}.
We use a variation on the classical \emph{method of images} to evaluate $H(u,v;B)$ explicitly
for \emph{fixed} $u,v$. This is the content of Theorem~\ref{thm:H-formula}, which appears to be new,
and from which we deduce Proposition~\ref{prop:Phi0-Phi1}.

For $n \in \Z$, set $s_n (x,y) := \sin (n\pi x ) \sinh (n\pi y)$.
For $x,y  \in (0,1)$ and $z \in [0,1]$, define
\[
h(x,y,z) := \sum_{n \in \N } \frac{ 2 \sin( n\pi  (1-z)) }{\sinh( n\pi  )}  \Big(  s_n (x,y) + s_n (1-x,1-y) -s_n (y,x) - s_n (1-y,1-x) \Big);
\]
 provided $x, y \in (0,1)$, the sum here converges absolutely, uniformly for $z \in [0,1]$.

\begin{theorem}
\label{thm:H-formula}
  For all $(u,v) \in U \setminus D$ and all $B \in \cB$,
  \begin{equation}
      \label{eq:H-formula}
    H(u,v; B)  = \int_B h \left( \frac{u+v}{2} , \frac{u-v}{2} ,w \right) \ud w .
  \end{equation}
\end{theorem}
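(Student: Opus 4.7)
By~\eqref{eq:H-def} and the standard identification of harmonic measure via Brownian motion, $H(u,v;B)$ is the bounded harmonic function on the interior of $U$ with boundary values $\bbind_B$ on $D$ and $0$ on the two legs $L_1 := \partial U \cap \{v=0\}$ and $L_2 := \partial U \cap \{u=1\}$. The strategy is to verify that the right-hand side of~\eqref{eq:H-formula} solves the same Dirichlet problem, and then to conclude by uniqueness. Introduce the change of variable $x = (u+v)/2$, $y = (u-v)/2$, an affine transformation that rescales the Laplacian by $\tfrac{1}{2}$ and maps $U$ to the triangle $\tilde U$ with vertices $(0,0), (1,0), (1/2,1/2)$; under it, $D, L_1, L_2$ become the sides $\{y=0\}$, $\{y=x\}$, $\{y=1-x\}$ of $\tilde U$, respectively.

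The formula for $h$ is motivated by a variation of the method of images: after summing over $n$, the four summands with signs $+,+,-,-$ are Poisson kernels for the unit square $[0,1]^2$ with sources at the orbit $\{(1-z,1),\,(z,0),\,(1,1-z),\,(0,z)\}$ of $(z,0)$ under the reflection group generated by the extended lines $\{y=x\}$ and $\{y=1-x\}$; the signs match the parity of the number of reflections needed to reach each image from $(z,0)$. For the proof itself, observe that each $s_n(a,b) = \sin(n\pi a)\sinh(n\pi b)$ is harmonic in $(a,b)$, and this property persists under all four affine substitutions appearing in $h$, so every summand is harmonic in $(x,y)$. The prefactor $1/\sinh(n\pi)$ makes the four respective contributions decay at exponential rates $e^{-n\pi(1-y)}, e^{-n\pi y}, e^{-n\pi(1-x)}, e^{-n\pi x}$ for $(x,y)$ in the interior of $\tilde U$, giving absolute and uniform convergence on compact interior subsets, so $h$ is harmonic in $(x,y)$ there by term-by-term differentiation.

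For the boundary values: on $\{y=x\}$ the pairs $s_n(x,y) = s_n(y,x)$ and $s_n(1-x,1-y) = s_n(1-y,1-x)$ cancel in the alternating sum, so $h = 0$; on $\{y=1-x\}$ the pairs $s_n(x,y) = s_n(1-y,1-x)$ and $s_n(1-x,1-y) = s_n(y,x)$ do the same; on $\{y=0\}$ three of the four terms vanish (via $\sinh(0)=0$ or $\sin(n\pi)=0$), and the surviving $s_n(1-x,1) = \sin(n\pi(1-x))\sinh(n\pi)$ cancels the prefactor, leaving
\[
h(x,0,z) = 2\sum_{n \geq 1} \sin(n\pi x)\sin(n\pi z),
\]
the Dirichlet-basis spectral representation of $\delta(x-z)$ on $[0,1]$. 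The main technical obstacle is to upgrade this formal $\delta$-function identity to boundary attainment strong enough to invoke uniqueness, given that $\bbind_B$ is discontinuous on $D$: the standard route is to approximate $\bbind_B$ by continuous $f_\eps \to \bbind_B$ pointwise and boundedly, prove the analogue of~\eqref{eq:H-formula} for $f_\eps$ via uniqueness of the Dirichlet problem on the polygonal (hence regular) domain $U$ with continuous boundary data, and pass to the limit $\eps \to 0$ by bounded convergence on both sides of the identity.
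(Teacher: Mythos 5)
Your proposal is correct and follows essentially the same route as the paper: the affine change of variables $(u,v)\mapsto((u+v)/2,(u-v)/2)$ onto the triangle $V$ with vertices $(0,0),(1,0),(1/2,1/2)$, the method-of-images sine--sinh series whose antisymmetries kill the boundary data on the two legs and reduce to the Dirichlet kernel on $\{y=0\}$, and the handling of the discontinuous data $\bbind_B$ by continuous approximants combined with the probabilistic representation of the Dirichlet solution and bounded convergence. The only (cosmetic) difference is that the paper realises the image structure by solving the Dirichlet problem on the full square $[0,1]^2$ with antisymmetrised boundary data and restricting to $V$, whereas you work directly on the triangle.
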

\begin{remark}
As a corollary to the theorem, following
a similar (but simpler) series of calculations to those in the proof of
Proposition~\ref{prop:Phi0-Phi1} below,
one can derive
\begin{align}
    \label{eq:sw2}
 2 \int_{U} H(u,v; [0,1]) \ud
u\ud v & =  \int_{[0,1]^2} H(u,v; [0,1]) \ud u \ud v \nonumber\\
& = \frac{32}{\pi^3} \sum_{\text{$n$ odd}} \frac{ (-1)^{\frac{n-1}{2}} }{n^3}
\sech(n\pi/2), 
\end{align}
which converges much faster than~\eqref{eq:sw}. 
Equality of~\eqref{eq:sw} and~\eqref{eq:sw2} entails the identity
\[
  1 - \frac{16}{\pi^3} \sum_{\text{$n$ odd}} \frac{\coth(n\pi /2)}{n^3} = \frac{32}{\pi^3} \sum_{\text{$n$ odd}} \frac{ (-1)^{\frac{n-1}{2}} }{n^3}
    \sech(n\pi/2),
  \]
for which we have not been able to find a reference.
\end{remark}

We prove Theorem~\ref{thm:H-formula} by solving an appropriate Dirichlet problem. For a 
domain $\cD \subset \R^2$ with boundary $\partial \cD$ and $g : \partial \cD \to \R$,  a twice-differentiable
$f : \R^2 \to \R$ solves the Dirichlet problem $(\cD, g)$ if $\nabla^2 f  =0$ on $\cD$
and $f = g$ on $\partial \cD$. 
We will show that $H$ solves the Dirichlet problem $(U,g)$ where $g$ depends on $u,v$ and $B$.
Since $g$ is not continuous, we approximate it by continuous functions.
Then we appeal to the explicit eigenfunctions of the Laplacian on $[0,1]^2$, and an application of the method of images~\cite{Keller},
 to solve the modified Dirichlet problem, and then take a limit. While we believe that Theorem~\ref{thm:H-formula} is new,
the idea has a long history, and we refer to~\cite{MorFesh} for some similar examples.

\begin{proof}[Proof of Theorem~\ref{thm:H-formula}.]
Provided $(u,v)  \in U \setminus D$, both $(u \pm v)/2$ are in $(0,1)$.
Note also that $h((1+v)/2,(1-v)/2,w) = h(u/2,u/2,w) = 0$ because of the antisymmetries
  $h(x,y,z) = -h(y,x,z) = -h(1-y,1-x,z)$, so   the integral in~\eqref{eq:H-formula} is~$0$ if $u=1$
  or $v=0$.  Thus it remains to prove~\eqref{eq:H-formula} for $(u,v) \in U \setminus \partial U$.
  Moreover, since Brownian motion started in the interior of $U$ hits $(0,0)$ or $(1,1)$ with
  probability 0, and the value of the integral is
 unaffected by the addition of points $0$ or $1$ to $B$, 
  it suffices to suppose that $B \subseteq (0,1)$.

Set $V := \{ (x,y) \in [0,1]^2 : x+y \leq 1, x \geq y \}$. 
	Define the matrix $M$ and associated linear transformation $m$ by
	\[ M:= \frac{1}{2} \begin{pmatrix} 1 & 1 \\ 1 & -1 \end{pmatrix} , \text{ and } m (u,v ) := M \begin{pmatrix} u \\ v \end{pmatrix} . \]
	Then $m$ maps $U$ to $V$, and $m (x,x) = (x,0)$.
Since $M M^\tra = \frac{1}{2} I$, where $I$ is the identity,
	the process $MW$ is a constant time-change of Brownian motion, so that
	\begin{equation}
	\label{eq:H-small-triangle}
	H(u,v;B) = \Pr \bigl( W_{\tau_{\partial V}}^{(1)} \in B, \, W_{\tau_{\partial V}}^{(2)} = 0 \bigmid W_0 =  m (u,v) \bigr).\end{equation}	
	Take $\gamma : [0,1] \to [0,1]$
	  continuous with $\gamma(0)=\gamma(1) =0$. Then $g_V  : \partial V \to [0,1]$ 
	with $g_V (x,y) = 0$ for $y > 0$ 
	and $g_V (x,0) = \gamma (x)$ is continuous on $\partial V$.
Define $g : S \to [0,1]$ by
\begin{equation}
\label{eq:g-def}
  g(x,y) = \begin{cases} \gamma (x) &\text{if $y=0$},\\
 \gamma(1-x) &\text{if $y=1$},\\
 -\gamma(y) &\text{if $x=0$},\\
 -\gamma(1-y) &\text{if $x=1$},
\end{cases}
\end{equation}
then $g$ is continuous on $S$ and satisfies $g(x,y) = - g(y,x) =  g(1-x,1-y)$ for all $(x,y) \in S$.
There is a unique twice-differentiable function $f : [0,1]^2 \to [0,1]$ that solves the
Dirichlet problem $([0,1]^2, g)$.
Moreover, $ f$ inherits from $g$ the symmetries 
 $f(x,y) = - f(y,x) = f(1-x,1-y)$ for all $(x,y) \in [0,1]^2$
(to see this, note for instance that $f(x,y) + f(y,x)$ solves the Dirichlet problem with zero boundary condition,
and hence is identically zero).
In particular, $ f(x,x) =  f(x,1-x) = 0$ for all $x \in [0,1]$.  Hence the function
$f_V :=  f|_V$ solves the Dirichlet problem $(V,g_V)$.  

For the simple region $[0,1]^2$, solutions to the Dirichlet problem can easily be described in terms of
combinations of functions $\sin (n \pi  x) \sinh(n \pi  y)$ for $n \in \N$ and
their images under the transformations $x \leftrightarrow y$ and $x \leftrightarrow 1-y$.
In particular, the solution to the Dirichlet problem $([0,1]^2,g)$, where $g$
is of the form~\eqref{eq:g-def}, is $f$ given by
\begin{align*}
   f(x,y) = \sum_{n\in \N} \frac{A_n s_n (x,y) + B_n s_n (1-x,1-y)+ C_n s_n (y,x) + D_n s_n(1-y,1-x)}{\sinh( n\pi  )}   ,
\end{align*}
where the boundary condition gives $A_n = B_n = -C_n = -D_n = 2\int_0^1 \gamma (z ) \sin( n\pi  (1-z))
\ud z$.

Given $B \in \cB$, with $B \cap \{0,1\} = \emptyset$,
  consider a sequence $\gamma_k$ of bounded continuous functions on $[0,1]$ with $\gamma_k (0) = \gamma_k(1) =0$
and $\lim_{k \to \infty} \gamma_k (x) = \2{B} (x)$ for every $x \in [0,1]$. Then let $f_k$ denote the solution to the
Dirichlet problem $([0,1]^2,g_k)$, where $g_k$ is constructed from $\gamma_k$ according to~\eqref{eq:g-def}.
Then
\begin{align*}
   f_k (x,y) = \sum_{n\in \N}  \frac{A_{k,n}}{\sinh( n\pi  )} \bigl( s_n (x,y) +  s_n (1-x,1-y) - s_n (y,x) - s_n(1-y,1-x) \bigr)  ,
\end{align*}
where 
  $A_{k,n} = 2 \int_0^1 \gamma_k(z) \sin(n\pi (1-z)) \ud z$. 
	As described above, restricting $f_k$ to $V$ gives the (unique)
	solution to the Dirichlet problem $(V,g_{V,k})$,
	where $g_{V,k} (x,y) = \gamma_k (x) \1 {y=0}$.
	By the connection between the Dirichlet problem 
	with continuous boundary conditions and stopped Brownian motion (see e.g.~Theorem~3.12 of~\cite{MoerPer}) we have
\[ \Exp \bigl( g_{V,k} ( W_{\tau_{\partial V}} ) \bigmid W_0 =  (x,y) \bigr) = f_k ( x,y ) ,
\text{ for all } (x,y) \in V. \]
	Now, by choice of $\gamma_k$,
	\[ \lim_{k \to \infty} g_{V,k} (W_{\tau_{\partial V}}) = \1 { W_{\tau_{\partial V}}^{(1)} \in B, \, W_{\tau_{\partial V}}^{(2)} = 0 }, \as ,\]
	so, by bounded convergence, 
	\begin{equation}
	\label{eq:dirichlet-prob}
	\Pr \bigl(  W_{\tau_{\partial V}}^{(1)} \in B, \, W_{\tau_{\partial V}}^{(2)} = 0 \bigmid W_0 =  (x,y) \bigr)
	= \lim_{k \to \infty} f_k ( x,y ) , \text{ for all } (x,y) \in V. \end{equation}
		By bounded convergence, 
	$\lim_{k \to \infty} A_{k,n} = 2 \int_B \sin(n \pi (1-z)) \ud z$. 
	For fixed $(x,y) \in (0,1)^2$, the series expression for $f_k$ is absolutely convergent,
and  $f(x,y) = \lim_{k \to \infty} f_k (x,y)$ satisfies
$f (x, y )   = \int_B h (x,y,z) \ud z$, with $h$ as defined in the display above Theorem~\ref{thm:H-formula}.
Then combining~\eqref{eq:H-small-triangle} and~\eqref{eq:dirichlet-prob}, we obtain the result. 
\end{proof}

Now we can complete the proof of Proposition~\ref{prop:Phi0-Phi1}.

\begin{proof}[Proof of Proposition~\ref{prop:Phi0-Phi1}.]
Recall the definition of $\Phi_1$ from~\eqref{eq:Phi1}.  For $y, z \in (0,1)$, $y \neq z$,
we have $H(y,z;B) = H( y \vee z, y \wedge z ; B)$ 
is given by the formula~\eqref{eq:H-formula}, so that
\begin{align*}
    \Phi_1 (B) = \int_B \ud w \int_0^1 \ud z \int_0^1 \ud y \int_0^1 \ud x \int_0^\infty q_t(x,y) h \bigl( \tfrac{y+z}{2} , \tfrac{(y\vee z)-(y \wedge z)}{2} ,w \bigr) \ud t,
\end{align*}
where we may define $h(x,0,w)$ and $h(1,y,w)$ arbitrarily.
We now proceed to show that $\Phi_1(B) = \int_B \psi(w) \ud w$, where $\psi$ is given by \eqref{eq:psi-def}.  Let 
\[ Q (y) := \int_0^1 \ud x \int_0^\infty q_t (x,y) \ud t .\]
Then
\begin{align*}
    \Phi_1 (B) & = \int_B\! \ud w \! \int_0^1 \! \ud y\!  \int_0^y\! Q(y) h \bigl( \tfrac{y+z}{2}, \tfrac{y-z}{2},w \bigr) \ud z
    + \int_B \! \ud w \! \int_0^1 \! \ud z\!  \int_0^z\! Q(y) h \bigl( \tfrac{y+z}{2}, \tfrac{z-y}{2},w \bigr) \ud y \\
    & = \int_B \! \ud w\! \int_0^1\! \ud y\! \int_0^y \bigl[ Q(y) + Q(z) \bigr]  h \bigl( \tfrac{y+z}{2}, \tfrac{y-z}{2},w \bigr) \ud z .
\end{align*}
Changing variables from $(y,z) \in U$ to $(u,v) = ( \tfrac{y+z}{2}, \tfrac{y-z}{2}) \in V$ we get
\begin{align*}
    \Phi_1 (B) & = 2 \int_B \ud w \int_0^1 \ud u \int_0^{u \wedge (1-u)}  \bigl[ Q(u+v) + Q(u-v) \bigr]  h ( u , v, w) \ud v .
\end{align*}
A useful alternative expression for $q_t$ (see~\cite[p.~122]{bosa}) is the spectral representation
\[ q_t (x,y) = 2 \sum_{m \in \N} \exp \left( - \frac{m^2 \pi^2 t}{2} \right) \sin (m\pi x) \sin (m\pi y) .\]
Hence
\[ Q(y) = \frac{8}{\pi^3} \sum_{m \text{ odd}} \frac{\sin (m\pi y)}{m^3} =  y (1-y) ;\]
see~\eqref{eq:s3odd} below.
It follows that
\begin{align*}
    \Phi_1 (B) & =  
    4 \int_B \ud w  \int_0^{1} \ud u  \int_0^{u \wedge (1-u)}  \bigl[ u (1-u) - v^2 \bigr] h(u,v,w) \ud v .
\end{align*}

Decomposing $h(u,v,w)$ into sums over even and odd $n$, and using that $h_{\textrm{even}}(u,v,w) = -h_{\textrm{even}}(1-u,v,w)$ and $h_{\textrm{odd}}(u,v,w) = h_{\textrm{odd}}(1-u,v,w) = h_{\textrm{odd}}(u,v,1-w)$, gives
\[
\int_0^{1}\! \ud u \! \int_0^{u \wedge (1-u)} \!\!\!  \bigl[ u (1-u) - v^2 \bigr] h(u,v,w) \ud v
 = 2\int_0^{1/2} \!\!\!\! \ud u \! \int_0^{u} \! \bigl[ u (1-u) - v^2 \bigr] h_{\textrm{odd}}(u,v,1-w) \ud v,\]
so that $\Phi_1(B) = 16 \displaystyle\int_B \sum_{\text{$n$ odd}} \dfrac{ I_n }{\sinh(n \pi)}  \sin(n \pi w) \ud w$, where
\[
I_n = \int_0^{1/2} \ud u \int_0^u [ u(1-u) -v^2 ]( s_n(u,v) + s_n(1-u,1-v) - s_n(v,u) - s_n(1-v,1-u) )\ud v.
\]
It remains to evaluate the integral $I_n$ for $n$ odd.  To simplify the calculation, observe that for $n$ odd, the angle-sum formulae for the trigonometric and hyperbolic sines imply 
\[
s_n(1/2-x, 1/2-y) + s_n(1/2+x,1/2+y) = 2 s_n(1/2,1/2) c_n(x,y),
\]
where $c_n(x,y) := \cos(n \pi x)\cosh(n \pi y)$.  Hence, changing variables from $(u,v)$ to $(x,y) = (1/2-v,1/2-u)$, the integral $I_n$ becomes
\[
I_n = 2s_n(1/2,1/2) \int_0^{1/2} \ud x \int_0^x [ x(1-x) - y^2] (c_n(y,x)- c_n(x,y))\ud y. \]
We can write $I_n = 2 s_n(1/2,1/2) (I_{n,1} + I_{n,2} )$, where, for $\tilde c_n (x,y) := c_n(y,x)- c_n(x,y)$,
\[ I_{n,1} := \int_0^{1/2} \ud x \int_0^x x(1-2x) \tilde c_n(x,y)\ud y, \text{ and } I_{n,2}:= \int_0^{1/2} \ud x \int_0^x ( x^2-y^2 ) \tilde c_n(x,y) \ud y .\]
Then, since $n\pi \int_0^x \tilde c_n(x,y) \ud y = \mathfrak{R}( (1+i)\sin( n\pi(1+i)x) )$, integration by parts of the (complex) integral $\int_0^{1/2}x(1-2x)\sin( n\pi(1+i)x)\ud x$ yields
\[
I_{n,1} = \frac{\sin(n\pi/2)}{\pi^4} \left(\frac{2\sinh(n\pi/2)}{n^4} - \frac{\pi \cosh(n\pi/2)}{2n^3} \right), \text{ for odd } n. 
\]
For $I_{n,2}$, notice that $ - \int_0^{1/2} \ud x \int_0^{x} y^2 \tilde c_n(x,y) \ud y = \int_0^{1/2 }\ud y\int_0^{y} x^2 \tilde c_n(x,y) \ud x$, 
so that $I_{n,2} = \int_0^{1/2}\int_0^{1/2} x^2 \tilde c_n(x,y) \ud x \ud y$, and integration by parts yields
\[
I_{n,2} = \frac{\sin(n\pi/2)}{\pi^4}\left(\frac{4\sinh(n\pi/2)}{n^4} -\frac{\pi\cosh(n\pi/2)}{n^3}\right), \text{ for odd } n. 
\]
Hence, using that $\sin^2(n\pi/2) = 1$ for $n$ odd, we have
\[
\frac{16 I_n}{\sinh(n\pi)} = \frac{32\sinh(n\pi/2)}{\pi^4 \sinh(n\pi)}\left( \frac{6\sinh(n\pi/2)}{n^4} - \frac{3\pi\cosh(n\pi/2)}{2n^3} \right) = 
\frac{24}{\pi^4} a_n,
\]
for odd $n$, where $a_n$ is given by \eqref{eq:psi-def}, and therefore $\Phi_1(B) = \int_B \psi(w) \ud w$.
\end{proof}

\section{Analysis of the splitting density}
\label{sec:numerics}

In this section we present 
analytical and numerical results on the probability density
$\phi_0$ appearing in~\eqref{eq:Phi0-def}. 
We start by discussing 
efficient numerical approximation of the function~$\psi$ defined at~\eqref{eq:psi-def} and the
constant $\mu$ defined at~\eqref{eq:mu-def}. 

First we establish the final equality in~\eqref{eq:mu-def}.
This will follow from the identity
\begin{equation}
\label{eq:sech-tanh}
 4\sum_{n \text{ odd}}  \frac{\tanh \left( n\pi / 2 \right)}{n^5}
= \frac{\pi^5}{96} + \pi \sum_{n \text{ odd}}  \frac{\sech^2 \left( n\pi / 2 \right)}{n^4} .\end{equation}
The equality~\eqref{eq:sech-tanh} may be deduced from the fact that, for $\alpha, \beta >0$ with $\alpha \beta = \pi^2$,
\begin{equation}
\label{eq:tanh-tanh}
 \alpha^{-2} \sum_{n \text{ odd}} \frac{\tanh \left(  n \alpha / 2 \right)}{n^5}
- ( - \beta)^{-2}  \sum_{n \text{ odd}} \frac{\tanh \left(   n \beta / 2 \right)}{n^5} = \frac{\alpha \beta (\beta-\alpha)}{192} ,\end{equation}
 a formula attributed to de~Saint-Venant in 1856~\cite[p.~294]{berndt}. It follows from~\eqref{eq:tanh-tanh} that
\[ \frac{\alpha+\beta}{\pi^4}  \sum_{n \text{ odd}} \frac{\tanh \left(  n \alpha / 2 \right)}{n^5}
- \frac{1}{\beta^2}  \sum_{n \text{ odd}} \frac{\tanh \left(  n \beta / 2 \right)-\tanh \left(   n \alpha / 2 \right)}{n^5 (\beta - \alpha)} = \frac{\pi^2}{192} .\]
Taking $\alpha - \beta \to 0$ gives~\eqref{eq:sech-tanh}. Then
from the first series in~\eqref{eq:mu-def} with~\eqref{eq:psi-def}, we have that
\[ \mu = \frac{192}{\pi^5} \sum_{n \text{ odd}} \frac{\tanh \left(   n \pi / 2 \right)}{n^5} - \frac{48}{\pi^4} \sum_{n \text{ odd}} \frac{1}{n^4} .\]
Writing $\zeta (s) := \sum_{n \in \N} n^{-s}$, note that
$\sum_{n \text{ odd}} n^{-s} = (1-2^{-s}) \zeta(s)$ for $s >1$. 
Thus we obtain the final series in~\eqref{eq:mu-def}, using~\eqref{eq:sech-tanh} and the fact that $\zeta (4) = \pi^4/90$.

Truncating the second series in~\eqref{eq:mu-def}, 
 we can write, for any odd integer $n$,
\[ \mu = \mu_n  + \frac{48}{\pi^4} r_{n} , \text{ where } \mu_n := \frac{48}{\pi^4} \sum_{\substack{k \leq n \\ k \text{ odd}}}  \frac{\sech^2 \left(  k\pi / 2 \right)}{k^4}
\text{ and } r_n  :=   \sum_{\substack{k > n \\ k \text{ odd}}} \frac{\sech^2 \left(  k\pi / 2 \right)}{k^4}.
\]
Since $\sech x \leq 2 \re^{-x}$ for all $x \in \R$ we have, for $n$ odd,
\begin{align}
\label{eq:r-bound} 
r_n  
\leq \frac{4}{(n+2)^4}   \sum_{\substack{k \geq n+2 \\ k \text{ odd}}} \re^{-k\pi}  \leq \frac{4 \re^{-(n+2)\pi}}{(n+2)^4} \cdot \frac{1}{1-\re^{-2\pi}} .\end{align}
In particular, the bound $r_3 < 10^{-9}$ guarantees that $\mu$ is approximated by
$\mu_3$
 to within $5 \times 10^{-10}$. Since $\mu_3 \approx 0.078268954659$, this 
suffices to evaluate the first 8 decimal digits of $\mu$ as $\mu \approx 0.07826895$.
 
This idea can be extended to compute moments of $\Phi_0$. Set  $m_k :=  \int_0^1 z^k \phi_0 (z) \ud z$.

\begin{proposition}
\label{prop:moments}
We have that $m_1 =1/2$, 
\[ m_2 = \frac{1}{2} - \frac{1}{60\mu}, ~ m_3 = \frac{1}{2} - \frac{1}{40\mu}, 
\text{ and } m_4 = \frac{1}{2} - \frac{11}{280\mu} + \frac{576}{\mu \pi^8} \sum_{n \text{ odd}}  \frac{\sech^2 \left( n\pi/ 2 \right)}{n^8} .\]
\end{proposition}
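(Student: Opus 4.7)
The plan is to evaluate $\mu m_k = \int_0^1 z^k \psi(z) \ud z$ by first decomposing $\psi$ into a polynomial part and a Clausen-type remainder, then integrating termwise, and finally reducing to a handful of Ramanujan-type closed-form evaluations. The starting observation is that $\sum_{n \text{ odd}} \sin(n\pi z)/n^3 = \pi^3 z(1-z)/8$ on $[0,1]$: this is simply the Fourier sine series of $z(1-z)$, with the even-$n$ coefficients vanishing by the symmetry about $z = 1/2$. This identifies the $-\pi/n^3$ piece of $a_n$ in~\eqref{eq:psi-def} with $-3z(1-z)$, so that
\[
\psi(z) = -3 z(1-z) + \frac{96}{\pi^4} R(z), \qquad R(z) := \sum_{n \text{ odd}} \frac{\tanh(n\pi/2)}{n^4} \sin(n\pi z),
\]
and the polynomial piece contributes $-3/\bigl((k+2)(k+3)\bigr)$ to $\mu m_k$.

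For the remainder, integration by parts yields each $\int_0^1 z^k \sin(n\pi z) \ud z$ (for $n$ odd) as an explicit polynomial in $1/(n\pi)$ (of degrees $1, 3, 3, 5$ for $k = 1, 2, 3, 4$), using only $\cos(n\pi) = -1$ and $\sin(n\pi) = 0$. Termwise summation (justified by absolute convergence of $R$) then expresses each $\mu m_k$ as an explicit rational combination of the series $T(j) := \sum_{n \text{ odd}} \tanh(n\pi/2)/n^j$ for $j \in \{5,7,9\}$. The first expression for $\mu$ in~\eqref{eq:mu-def} rearranges as $(192/\pi^5) T(5) = \mu + 1/2$, which eliminates every occurrence of $T(5)$ in favour of $\mu$; for $m_1$ the expression then collapses immediately to $\mu/2$, for $m_2, m_3$ only $T(7)$ survives, and for $m_4$ both $T(7)$ and $T(9)$ survive. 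Unwinding the arithmetic, the formulas in the Proposition are equivalent to the two identities
\[
T(7) = \frac{7\pi^7}{23040} \qquad \text{and} \qquad 8\, T(9) - \pi\, U(8) = \frac{37\pi^9}{161280},
\]
where $U(8) := \sum_{n \text{ odd}} \sech^2(n\pi/2)/n^8$.

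Both identities fit into the family of Ramanujan-type transformation formulas of the shape
\[
\alpha^{-k} T_\alpha(2k+1) - (-\beta)^{-k} T_\beta(2k+1) = P_k(\alpha, \beta), \qquad \alpha\beta = \pi^2,
\]
with $T_\alpha(j) := \sum_{n \text{ odd}} \tanh(n\alpha/2)/n^j$ and $P_k$ a polynomial in $\alpha, \beta$ with rational coefficients; the case $k = 2$ is de~Saint-Venant's formula~\eqref{eq:tanh-tanh}, already invoked earlier in this section. For $k = 3$ (odd) the factor $(-\beta)^{-3} = -\beta^{-3}$ makes the left-hand side symmetric in $\alpha, \beta$, so evaluation at the fixed point $\alpha = \beta = \pi$ of the involution $\alpha \leftrightarrow \pi^2/\alpha$ yields $T(7) = \tfrac{1}{2}\pi^3 P_3(\pi, \pi)$ directly. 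For $k = 4$ (even) the left-hand side is antisymmetric and vanishes at $\alpha = \beta = \pi$; differentiating along the curve $\alpha\beta = \pi^2$ at the fixed point $\alpha = \pi$, exactly as in the derivation of~\eqref{eq:sech-tanh} from~\eqref{eq:tanh-tanh}, produces the required linear relation between $T(9)$ and $U(8)$.

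The main obstacle is identifying the polynomials $P_3$ and $P_4$ explicitly. Both can be extracted from Ramanujan's general transformation for $\sum_{n \geq 1} 1/(n^{2k+1}(e^{2n\alpha}-1))$ under $\alpha \leftrightarrow \pi^2/\alpha$, after rewriting the $\tanh$-series via $\tanh(n\alpha/2) = 1 - 2/(e^{n\alpha}-1) + 4/(e^{2n\alpha}-1)$; the coefficients of $P_k$ are then standard combinations of Bernoulli numbers, and the stated numerical values of $T(7)$ and of $8 T(9) - \pi U(8)$ follow by substitution (for $k = 3$) and differentiation (for $k = 4$) at $\alpha = \beta = \pi$.
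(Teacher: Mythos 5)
Your proposal is correct and follows essentially the same route as the paper: integrate $z^k\sin(n\pi z)$ termwise against the coefficients $a_n$, use the first series in~\eqref{eq:mu-def} to eliminate the $\sum_{n \text{ odd}}\tanh(n\pi/2)n^{-5}$ series in favour of $\mu$, and reduce to exactly the two identities $\sum_{n\text{ odd}}\tanh(n\pi/2)n^{-7}=7\pi^7/23040$ and the $T(9)$--$U(8)$ relation obtained by differentiating a de~Saint-Venant/Ramanujan-type transformation at $\alpha=\beta=\pi$, which is precisely what the paper does (citing Berndt for both, and obtaining $m_3$ from $m_2$ by symmetry rather than by direct integration). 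The only differences are presentational, and your intermediate reductions check out.
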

\begin{proof}
If
\[ \omega_{k,n} := \int_0^1 z^k \sin ( n \pi z ) \ud z , \]
then we have from~\eqref{eq:psi-def} and the fact that $\phi_0 (z) = \mu^{-1} \psi (z)$ that
$m_k = \frac{24}{\mu \pi^4} \sum_{n \text{ odd}} a_n \omega_{k,n}$. 
For example, $\omega_{1,n} = \frac{1}{n\pi} (-1)^{n+1}$ so
$ m_1 = \frac{24}{\mu \pi^5} \sum_{n \text{ odd}} \frac{a_n}{n} = \frac{1}{2}$, by~\eqref{eq:mu-def},
as is to be  expected, due to the symmetry of $\phi_0$ around $1/2$. Also, $\omega_{2,n} = \frac{1}{n \pi} - \frac{4}{n^3 \pi^3}$ for odd $n$,
so
\begin{align*}
     m_2 & = \frac{1}{2} - \frac{96}{\mu \pi^7} \sum_{n \text{ odd}} \frac{a_n}{n^3} = \frac{1}{2} - \frac{384}{\mu \pi^7} \sum_{n \text{ odd}} \frac{\tanh \left(  n\pi / 2 \right)}{n^7}
		+ \frac{96}{\mu \pi^6} \cdot \zeta(6) \cdot \frac{63}{64} . \end{align*}
Since $\sum_{n \text{ odd}} n^{-7} \tanh \left( \frac{n\pi}{2} \right) = \frac{7\pi^7}{23040}$~\cite[p.~293]{berndt}
and $\zeta (6) = \frac{\pi^6}{945}$, we get the claimed formula for $m_2$. The formula for $m_3$ follows from those for $m_1$ and $m_2$ by symmetry of~$\phi_0$.

Finally, for $n$ odd, $\omega_{4,n} = \frac{1}{n\pi} - \frac{12}{n^3 \pi^3} + \frac{48}{n^5\pi^5}$, and, similarly to before,
\begin{align*}
m_4 & = \frac{1}{2} - \frac{6}{35\mu} + \frac{4608}{\mu \pi^9}  \sum_{n \text{ odd}}  \frac{\tanh \left(  n\pi / 2 \right)}{n^9} .\end{align*}
The claimed formula for $m_4$ now follows from the identity
\[ 4096  \sum_{n \text{ odd}}  \frac{\tanh \left(  n\pi / 2 \right)}{n^9} = \frac{37 \pi^9}{315} + 512 \pi  \sum_{n \text{ odd}}  \frac{\sech^2 \left(  n\pi / 2 \right)}{n^8} ,\]
which can be obtained in a similar fashion to~\eqref{eq:sech-tanh},
but replacing~\eqref{eq:tanh-tanh} by the appropriate higher-order analogue from~\cite[p.~294]{berndt}.
\end{proof}

The  formulae in Proposition~\ref{prop:moments}
give $m_2$ and $m_4$ to 10 decimal places as
\begin{equation}
\label{eq:phi-two-moments}
 m_2 \approx 0.2870590372, \text{ and } m_4 \approx
0.1212564646.\end{equation}

\begin{corollary}
The distribution $\Phi_0$ is not a Beta distribution.
\end{corollary}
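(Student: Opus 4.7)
The plan is to use the symmetry of $\phi_0$ about $1/2$ together with its leading asymptotic behaviour at $z = 0$ to force an immediate contradiction. First, I would observe that for odd $n$ one has $\sin(n\pi(1-z)) = \sin(n\pi z)$, so the series~\eqref{eq:psi-def} gives $\psi(z) = \psi(1-z)$, and hence $\phi_0$ is symmetric about $1/2$. If $\Phi_0$ were a Beta$(a,b)$ distribution, this symmetry would force $b = a$, so it suffices to rule out Beta$(a,a)$ for every $a > 0$.

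Next, I would invoke the near-zero asymptotic $\psi(z) \sim 3 z^2$ as $z \to 0$, which is stated after~\eqref{eq:psi-def} and established separately in this section; this gives $\phi_0(z) \sim (3/\mu) z^2$ as $z \to 0$. On the other hand, the Beta$(a,a)$ density is $z^{a-1}(1-z)^{a-1}/B(a,a)$, whose leading-order behaviour as $z \to 0$ is $z^{a-1}/B(a,a)$. Matching the exponent forces $a = 3$, and then matching the leading constant would require $3/\mu = 1/B(3,3) = 30$, i.e., $\mu = 1/10$.

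The contradiction then comes from the numerical value of $\mu$: by the remainder estimate~\eqref{eq:r-bound}, $\mu$ agrees with $\mu_3 \approx 0.07826895$ to within $10^{-9}$, so in particular $\mu < 1/10$. Hence $\Phi_0$ cannot be a Beta distribution. The argument has no real obstacle; the only external ingredient is the asymptotic $\psi(z) \sim 3 z^2$ as $z \to 0$, which is treated elsewhere in this section. As an alternative, one could compare $m_2$ for Beta$(3,3)$, namely $\tfrac{1}{4} + \tfrac{1}{28} = \tfrac{2}{7}$, with $m_2 = \tfrac{1}{2} - \tfrac{1}{60\mu}$ from Proposition~\ref{prop:moments}, which again yields $\mu = 7/90 \neq 0.07826895\ldots$
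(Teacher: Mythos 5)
Your argument is correct, but it takes a genuinely different route from the paper's. The paper works entirely with moments: it reduces to the symmetric case Beta$(\beta,\beta)$ (via $m_1=1/2$), numerically solves $m_{\beta,2}=m_2$ to pin down a unique candidate $\beta_\star\approx 2.873$, and then shows that $m_{\beta_\star,4}$ disagrees with $m_4$ from Proposition~\ref{prop:moments} in roughly the fifth decimal place --- a comparison that genuinely needs the $10$-digit accuracy supplied by the closed forms and the tail bound~\eqref{eq:r-bound}. You instead use the local behaviour at the endpoint: symmetry reduces to Beta$(a,a)$, the asymptotic $\phi_0(z)\sim(3/\mu)z^2$ of Lemma~\ref{lem:psi-near-zero} forces $a=3$ exactly (no numerics needed for this step), and then matching the constant forces $\mu=1/B(3,3)/30^{-1}$, i.e.\ $\mu=1/10$, which is ruled out by a very crude numerical bound since $\mu\approx 0.0783$. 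Your approach buys a much more robust final numerical comparison (a discrepancy of order $10^{-2}$ rather than $10^{-5}$) and identifies the unique candidate Beta exactly rather than numerically; the price is the extra analytic input of Lemma~\ref{lem:psi-near-zero}, which the paper's moment-only proof avoids. Two small points: the lemma you invoke appears textually after the corollary in \S\ref{sec:numerics}, but its proof does not depend on the corollary, so there is no circularity; and your fallback via $m_2$ of Beta$(3,3)$ being $2/7$, forcing $\mu=7/90$, is an equally valid finish.
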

\begin{proof}
For $\beta >0$, the Beta$(\beta,\beta)$ distribution has density 
proportional to $x^{\beta-1} (1-x)^{\beta-1}$ for $x \in [0,1]$, and its $k$th moment
is $m_{\beta,k} = \prod_{j=0}^{k-1} \frac{\beta+j}{2\beta+j}$.
Thus $m_{\beta,1} = 1/2$. To fit $m_{\beta,2} = m_2$ as given by~\eqref{eq:phi-two-moments}
requires that $\beta = \beta_\star \approx 
2.8729963811$. But the Beta$(\beta_\star,\beta_\star)$
distribution has $m_{\beta_\star,4} \approx 0.1212665009$, which fails to match~$m_4$ from~\eqref{eq:phi-two-moments}.
\end{proof}

Now we turn to analysis of the density $\phi_0$. It is useful to write
\[ a_n =  b_n - \frac{4}{n^4} d_n, \text{ where } b_n := \frac{4-n\pi}{n^4} \text{ and } d_n := 1-  \tanh \left ( n \pi/2 \right) , \text{ for } n \in \N .\]
Note that $0 < d_n < 2 \re^{-n \pi}$. For $k \in \N$, define the functions
\begin{equation}
    \label{eq:S-C}
 S_{k} (x) := \sum_{n=1}^\infty \frac{\sin nx}{n^{k}} , \text{ and } C_k (x):=  \sum_{n=1}^\infty \frac{\cos nx}{n^{k}} .\end{equation}
It is known (see e.g.~equation 1.443.1 of~\cite[p.~47]{gr}) that
\begin{equation}
\label{eq:s3}
S_3 (x) = \frac{\pi^2}{6} x - \frac{\pi}{4} x^2 + \frac{1}{12} x^3 , \text{ for } 0 \leq x \leq 2\pi .\end{equation}
There is no closed form for $S_2$ or $S_4$, which are relatives of the \emph{Clausen function}~\cite{lewin}.
We will express $\psi$ in terms of the function
\begin{equation}
\label{eq:S-def}
 S(x) := \sum_{n \text{ odd}} \frac{\sin n\pi x}{n^4} = S_4 (\pi x) - \frac{1}{16} S_4 (2\pi x) .\end{equation}

\begin{lemma}
\label{lem:psi-clausen}
We have that
\begin{align}
\label{eq:psi3}
\psi (x) & = \frac{96}{\pi^4} S(x) - 3 x (1-x) - \frac{96}{\pi^4} \sum_{n \text{ odd}} \frac{d_n}{n^4} \sin n \pi x . \end{align}
Moreover, $\psi$ is twice continuously differentiable on $[0,1]$.
\end{lemma}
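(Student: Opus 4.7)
The strategy is to split the Fourier series for $\psi$ into a ``polynomial'' part (coming from the $b_n$) plus a ``Clausen'' part (coming from $1/n^4$ terms) plus a rapidly decaying tail (coming from the $d_n$).

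First, starting from the decomposition $a_n = b_n - 4 d_n/n^4$ (which is a direct verification, using $\tanh(n\pi/2) = 1 - d_n$), I would substitute into~\eqref{eq:psi-def} to obtain
\[
\psi(x) = \frac{24}{\pi^4}\sum_{n \text{ odd}} b_n \sin n\pi x \; - \; \frac{96}{\pi^4}\sum_{n \text{ odd}} \frac{d_n}{n^4} \sin n\pi x,
\]
and then use $b_n = 4/n^4 - \pi/n^3$ to split the first sum into $\frac{96}{\pi^4} S(x)$ and $-\frac{24}{\pi^3}\sum_{n \text{ odd}} n^{-3} \sin n\pi x$, with $S$ as defined in~\eqref{eq:S-def}.

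The key step is the evaluation of $\sum_{n \text{ odd}} n^{-3} \sin n\pi x$ in closed form. For this I would write, for $y \in [0,\pi]$,
\[
\sum_{n \text{ odd}} \frac{\sin n y}{n^3} \;=\; S_3(y) - \sum_{k=1}^\infty \frac{\sin(2k y)}{(2k)^3} \;=\; S_3(y) - \tfrac{1}{8} S_3(2y),
\]
valid because $2y \in [0,2\pi]$ keeps us in the range of validity of~\eqref{eq:s3}. A short polynomial calculation using~\eqref{eq:s3} collapses this to $\tfrac{1}{8} y(\pi-y)$. Setting $y = \pi x$ for $x \in [0,1]$ gives $\sum_{n \text{ odd}} n^{-3} \sin n\pi x = \tfrac{\pi^3}{8} x(1-x)$, so that $-\tfrac{24}{\pi^3}$ times this sum is exactly $-3x(1-x)$, yielding~\eqref{eq:psi3}.

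For the $C^2$ statement, I would verify that each of the three summands in~\eqref{eq:psi3} is $C^2$ on $[0,1]$: the polynomial $-3x(1-x)$ is $C^\infty$; the $d_n$-series is $C^\infty$ since $d_n \leq 2e^{-n\pi}$ gives uniform convergence of all termwise derivatives; and formally differentiating $S(x)$ twice produces the series $\pi \sum_{n \text{ odd}} n^{-3}\cos n\pi x$ and $-\pi^2 \sum_{n \text{ odd}} n^{-2} \sin n\pi x$, which converge absolutely and uniformly by comparison with $\sum n^{-2}$. Standard results on termwise differentiation then give $S \in C^2([0,1])$, completing the proof. There is no real obstacle here; the only point requiring care is keeping track of the splitting-into-odd-and-even in the $1/n^3$ series, which is why the explicit closed form~\eqref{eq:s3} is indispensable.
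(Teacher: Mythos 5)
Your proposal is correct and follows essentially the same route as the paper: the same decomposition $a_n = b_n - 4d_n/n^4$, the same reduction of the $1/n^3$ odd series to $S_3(\pi x) - \tfrac18 S_3(2\pi x) = \tfrac{\pi^3}{8}x(1-x)$ via~\eqref{eq:s3}, and the same conclusion. Your explicit check that the twice-differentiated series for $S$ converges uniformly is slightly more detailed than the paper's remark that this is ``evident,'' but it is the same argument.
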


\begin{remark}
The third derivative of $\psi$ diverges as $x \to 0$ (to $-\infty$) and $x \to 1$ (to $+\infty$).
\end{remark}

\begin{proof}[Proof of Lemma~\ref{lem:psi-clausen}.]
By rearranging~\eqref{eq:psi-def}, we can write
\[
 \psi (x) = \frac{96}{\pi^4} \sum_{n \text{ odd}} \frac{\sin n\pi x}{n^4}
- \frac{24}{\pi^3} \sum_{n \text{ odd}} \frac{\sin n \pi x}{n^3} - \frac{96}{\pi^4} \sum_{n \text{ odd}} \frac{d_n}{n^4} \sin n \pi x . \]
From~\eqref{eq:s3} we have that, for $0 \leq x \leq 1$,
\begin{equation}
\label{eq:s3odd}
 \sum_{n \text{ odd}} \frac{\sin n \pi x}{n^3} = S_3 (\pi x) - \frac{1}{8} S_3 (2\pi x) = \frac{\pi^3}{8} x (1-x) .\end{equation}
 This yields~\eqref{eq:psi3}. 
 The series expression for $S(x)$ is evidently twice continuously differentiable, and hence the same
 is true for $\psi$.
 \end{proof}
 
 Although $S_4$ has no closed form, it has some numerically efficient series representations.
 We use one of these to obtain an efficient approximation for $\psi$, and hence $\phi_0$.
The (absolute) Bernoulli numbers are
$| B (2\ell) | := 2 \zeta (2 \ell) (2\ell)! / (2\pi)^{2\ell}$. 
For $k,m \in \ZP$ and $x \in [0,1]$,
 define
\begin{align*}
\psi^{k,m} (x ) & := \frac{84}{\pi^3} x \zeta(3) + \frac{8}{\pi} x^3 \log (\pi x) - \frac{8}{\pi} \left( \frac{11}{6} + \log 2 \right) x^3 - 3 x (1-x) \\
& {} \qquad {} + 48 \pi x^5 \sum_{n=0}^k \frac{ | B(2n+2) | \left( 2^{2n+1} - 1 \right)}{(n+1) (2n+5)!} \pi^{2n} x^{2n}
- \frac{96}{\pi^4} \sum_{\substack{n \text{ odd}\\ n \leq m}} \frac{d_n}{n^4} \sin n \pi x .\end{align*}
It turns out that $\psi^{k,m} \to \psi$ as $k,m \to \infty$, but the convergence
is poor as $x$ approaches 1. Thus we make use of the symmetry of $\psi$ and consider
the symmetrization
\[ \phi_0^{k,m} (x) := \begin{cases} \frac{1}{\mu} \psi^{k,m} (x) &\text{if } 0 \leq x \leq 1/2,\\
 \frac{1}{\mu} \psi^{k,m} (1-x) & \text{if } 1/2 < x \leq 1. \end{cases} \]
Then $\phi_0^{k,m}$ converges rather rapidly to $\phi_0$, as shown by the following estimate.

\begin{lemma}
\label{lem:psi-approx}
For all $k, m \in \ZP$, with $m$ odd,
\[  \sup_{0 \leq x \leq 1} \left| \phi_0^{k,m} (x) - \phi_0(x) \right| \leq \frac{4^{-k} \zeta(2k+4)}{\pi \mu (2k+4)^4}  +   \frac{2\re^{-(m+2)\pi}}{\mu (m+2)^4} 
.\]
For example, $\sup_{0 \leq x \leq 1}  | \phi_0^{9,5} (x) - \phi_0 (x) | < 10^{-10}$.
\end{lemma}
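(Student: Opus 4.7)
The plan is to reduce to the interval $[0,1/2]$ using the symmetry $\psi(x)=\psi(1-x)$ (which follows directly from~\eqref{eq:psi-def} since $\sin n\pi(1-x) = \sin n\pi x$ for odd $n$) together with the symmetric definition of $\phi_0^{k,m}$; it then suffices to prove $|\psi^{k,m}(x)-\psi(x)| \leq \tfrac{4^{-k}\zeta(2k+4)}{\pi(2k+4)^4} + \tfrac{2\re^{-(m+2)\pi}}{(m+2)^4}$ for $x \in [0,1/2]$, since the two pieces of the symmetric extension must agree at $x=1/2$ (where $\psi'(1/2)=0$ by symmetry is not needed --- we only need a uniform bound) and the bound transfers to the extension on $[1/2,1]$.

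The heart of the argument is deriving an explicit Clausen-type expansion of $S_4$ valid on $(0,2\pi)$. Starting from the classical identity $C_1(\theta) = -\log(2\sin(\theta/2))$, the power series $\log(\sin(t/2)/(t/2)) = -\sum_{n\geq 1}\tfrac{|B_{2n}|}{2n(2n)!}t^{2n}$ and three successive integrations (using $S_2' = C_1$, $C_3' = -S_2$, $S_4' = C_3$, with the constants of integration $S_k(0)=0$, $C_3(0)=\zeta(3)$) yield
\[ S_4(\theta) = \zeta(3)\theta + \frac{\theta^3 \log\theta}{6} - \frac{11\theta^3}{36} - \sum_{n=1}^\infty \frac{|B_{2n}|}{2n(2n+3)!}\,\theta^{2n+3}, \quad \theta \in (0,2\pi). \]
Substituting into $S(x) = S_4(\pi x) - \tfrac{1}{16}S_4(2\pi x)$ (the representation from~\eqref{eq:S-def}), multiplying by $96/\pi^4$, re-indexing the Bernoulli sum by $j = n-1$, and combining with~\eqref{eq:psi3}, one finds after careful bookkeeping of the $\zeta(3)x$, $x^3\log(\pi x)$, $x^3\log 2$, and $x^3$ coefficients that
\[ \psi(x) - \psi^{k,m}(x) = R_k(x) - T_m(x), \]
where $R_k(x) := 48\pi \!\sum_{j=k+1}^\infty \tfrac{|B(2j+2)|(2^{2j+1}-1)}{(j+1)(2j+5)!}\pi^{2j}x^{2j+5}$ and $T_m(x) := \tfrac{96}{\pi^4}\sum_{n \text{ odd},\, n>m} \tfrac{d_n}{n^4} \sin n\pi x$.

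The two tails are then estimated separately. For $R_k$: substitute $|B(2j+2)| = 2\zeta(2j+2)(2j+2)!/(2\pi)^{2j+2}$, which converts $R_k(x)$ into a series involving $\zeta(2j+2)(2^{2j+1}-1)x^{2j+5}/((j+1)(2j+3)(2j+4)(2j+5)4^j)$ (times explicit constants). For $x \in [0,1/2]$ use $x^{2j+5} \leq 2^{-(2j+5)}$ and $2^{2j+1}-1 \leq 2\cdot 4^j$ to absorb the $4^j$; the remaining series is geometric and dominated by its first term (using monotonicity of $\zeta(2j+2)$), and the elementary inequality $(2k+5)(2k+6)(2k+7) \geq (2k+4)^3$ collapses the denominator to $(2k+4)^4$, yielding $R_k(x) \leq \tfrac{4^{-k}\zeta(2k+4)}{\pi(2k+4)^4}$. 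For $T_m$: apply $|d_n| \leq 2\re^{-n\pi}$ and $n \geq m+2$ (valid as $m$ is odd) to bound $|T_m(x)| \leq \tfrac{192\re^{-(m+2)\pi}}{\pi^4(m+2)^4(1-\re^{-2\pi})}$, and observe $192/(\pi^4(1-\re^{-2\pi})) < 2$ numerically. Adding the two bounds, dividing by $\mu$, and extending by symmetry gives the stated inequality; the case $k=9$, $m=5$ is then a direct numerical verification. The main obstacle is the algebraic bookkeeping needed to identify the polynomial/logarithmic part of $\tfrac{96}{\pi^4}S(x)$ with the closed-form terms in $\psi^{k,m}$ --- particularly tracking signs and the combination of $\log(2\pi x) = \log(\pi x) + \log 2$ arising from $S_4(2\pi x)$ --- since any miscounted coefficient would produce a constant spurious discrepancy that no tail estimate could repair.
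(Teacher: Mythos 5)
Your proposal is correct and follows essentially the same route as the paper: reduce to $[0,1/2]$ by symmetry, expand $S_4$ via the Clausen-type series (the paper quotes the standard $S_2$ expansion and integrates twice, whereas you derive it from $C_1$, a cosmetic difference), identify $\psi-\psi^{k,m}$ as the sum of the Bernoulli tail and the $d_n$ tail, and bound each exactly as the paper does, via $|B(2\ell)|=2\zeta(2\ell)(2\ell)!/(2\pi)^{2\ell}$ with geometric domination for the first and $d_n<2\re^{-n\pi}$ with the numerical inequality $96<\pi^4(1-\re^{-2\pi})$ for the second. The coefficient bookkeeping you flag as the main risk does check out and matches the paper's~\eqref{eq:s4odd} and~\eqref{eq:psi-series}.
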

\begin{proof}
A standard series expansion, valid for $0 \leq x < 2\pi$, is
\begin{equation}
\label{eq:cl2}
 S_2 (x) = x - x \log x + \frac{x^3}{2} \sum_{n=0}^\infty \frac{| B (2n+2) |}{(n+1)(2n+3)!} x^{2n} ;\end{equation}
see e.g.~equation~(4.28) of~\cite{lewin} or Proposition~3.1 of the more readily accessible~\cite{lo}.
Differentiation in~\eqref{eq:S-C} gives
$S'_4 (x) = C_3 (x)$ and $C'_{3} (x) = - S_2 (x)$, so we may integrate~\eqref{eq:cl2}
twice, term by term, using the initial values 
$C_3 (0) = \zeta(3)$ and $S_4 (0) = 0$,
 to get
\begin{equation}
\label{eq:cl4}
 S_4 (x) = x \zeta (3) + \frac{x^3}{6} \log x - \frac{11}{36} x^3 - \frac{x^5}{2} \sum_{n=0}^\infty \frac{| B (2n+2) |}{(n+1)(2n+5)!} x^{2n} , \end{equation}
for $0 \leq x < 2\pi$.
It follows from~\eqref{eq:cl4} that, for $0 \leq x < 1$,
\begin{align}
\label{eq:s4odd}
  S_4 (\pi x) - \frac{1}{16} S_4 (2\pi x) 
& = \frac{7}{8} \pi x \zeta(3) + \frac{\pi^3}{12} x^3 \log (\pi x) - \frac{\pi^3}{12} \left( \frac{11}{6} + \log 2 \right) x^3 \nonumber\\
& {} \qquad {} + \frac{\pi^5 x^5}{2} \sum_{n=0}^\infty \frac{ | B(2n+2) | \left( 2^{2n+1} - 1 \right)}{(n+1) (2n+5)!} \pi^{2n} x^{2n} . 
\end{align}
Then substituting~\eqref{eq:s4odd} for~$S(x)$ in~\eqref{eq:S-def} and~\eqref{eq:psi3}, we get for $0 \leq x < 1$,
\begin{align}
\label{eq:psi-series}
\psi (x) & =  \frac{84}{\pi^3} x \zeta(3) + \frac{8}{\pi} x^3 \log (\pi x) - \frac{8}{\pi} \left( \frac{11}{6} + \log 2 \right) x^3 - 3 x (1-x) \nonumber\\
& {} \qquad {} + 48 \pi x^5 \sum_{n=0}^\infty \frac{ | B(2n+2) | \left( 2^{2n+1} - 1 \right)}{(n+1) (2n+5)!} \pi^{2n} x^{2n}
- \frac{96}{\pi^4} \sum_{n \text{ odd}} \frac{d_n}{n^4} \sin n \pi x . 
\end{align}
Since $\phi_0(x) = \mu^{-1} \psi (x)$, it follows that 
\begin{align*}
\sup_{0 \leq x \leq 1/2} \left| \phi_0 (x) - \phi_0^{k,m} (x) \right| & \leq \frac{3\pi}{\mu} \sum_{n>k} \frac{ | B(2n+2) | \pi^{2n}}{(n+1) (2n+5)!} 
+ \frac{96}{\pi^4 \mu} \sum_{\substack{n \text{ odd} \\ n>m}} \frac{d_n}{n^4} . \end{align*}
Here, since $| B (2\ell) | = 2 \zeta (2 \ell) (2\ell)! / (2\pi)^{2\ell}$,
\[
 \frac{3\pi}{\mu} \sum_{n>k} \frac{ | B(2n+2) | \pi^{2n}}{(n+1) (2n+5)!} 
\leq 
 \frac{3}{\pi \mu}  \sum_{n>k} \frac{  \zeta(2n+2) }{(2n+2)^4} 2^{-2n}
 \leq 
 \frac{3}{\pi \mu}   \frac{  \zeta(2k+4) }{(2k+4)^4} \sum_{n = k+1}^\infty 4^{-n},
\]
since $\zeta( \, \cdot \,)$ is decreasing. Moreover, a similar bound to~\eqref{eq:r-bound}
gives
\[ \sum_{\substack{n \text{ odd} \\ n>m}} \frac{d_n}{n^4} \leq 
2 \sum_{\substack{n \text{ odd} \\ n>m}} \frac{\re^{-n\pi}}{n^4}
\leq \frac{2}{1-\re^{-2\pi}} \frac{\re^{-(m+2)\pi}}{(m+2)^4}
 .\]
With the numerical bound $96 < \pi^4 ( 1 - \re^{-2\pi})$, this completes the proof.
\end{proof}

Important for the asymptotics of the normalized gap distribution given in Theorem~\ref{thm:gap-statistics}
is the behaviour of $\phi_0(x)$ as $x \to 0$ (see Theorem~\ref{thm:general-splitting-distribution} below). 
Here the expression~\eqref{eq:psi-series}
is misleading at first glance, as the next result shows.

\begin{lemma}
\label{lem:psi-near-zero}
We have that $\phi_0 (x) \sim (3/\mu) x^2$ as $x \to 0$.
\end{lemma}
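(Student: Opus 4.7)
My plan is to extract the leading behaviour of $\psi$ at the origin (since $\phi_0 = \mu^{-1}\psi$, it suffices to show that $\psi(x) \sim 3x^2$) directly from the series representation~\eqref{eq:psi-series} derived in the proof of Lemma~\ref{lem:psi-approx}. The algebraic piece $-3x(1-x) = -3x + 3x^2$ appearing there already supplies the anticipated coefficient $+3$ at order $x^2$, so the main work will be to verify that (i) the coefficient of $x$ in $\psi$ vanishes, and (ii) no other contribution of order $x^2$ appears.

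For (ii), the remaining explicit terms $\tfrac{8}{\pi}x^3\log(\pi x)$, $-\tfrac{8}{\pi}(\tfrac{11}{6}+\log 2)x^3$, and the $x^5$-prefactored absolutely convergent power series are all $o(x^2)$. Because $d_n := 1 - \tanh(n\pi/2) = O(e^{-n\pi})$, the residual series $F(x) := \sum_{n \text{ odd}} (d_n/n^4)\sin(n\pi x)$ extends to an entire function of $x$ and admits the term-by-term Taylor expansion
\[
F(x) = \pi x \sum_{n \text{ odd}} \frac{d_n}{n^3} - \frac{\pi^3 x^3}{6}\sum_{n \text{ odd}} \frac{d_n}{n} + O(x^5),
\]
so $F$ contributes nothing at order $x^2$ either.

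For (i), collecting the coefficient of $x$ in~\eqref{eq:psi-series} gives
\[
\frac{84\zeta(3)}{\pi^3} - 3 - \frac{96}{\pi^3}\sum_{n \text{ odd}} \frac{d_n}{n^3}.
\]
Writing $d_n = 1-\tanh(n\pi/2)$ and using $\sum_{n \text{ odd}}n^{-3} = \tfrac{7}{8}\zeta(3)$, the $\zeta(3)$-pieces cancel, reducing the coefficient of $x$ to $-3 + (96/\pi^3)\sum_{n \text{ odd}}n^{-3}\tanh(n\pi/2)$. Vanishing of this coefficient is equivalent to the classical identity
\[
\sum_{n \text{ odd}} \frac{\tanh(n\pi/2)}{n^3} = \frac{\pi^3}{32},
\]
which is one of Ramanujan's formulae (see e.g.~\cite{berndt}). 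It can also be obtained by contour-integrating $\pi\cot(\pi z)\,\pi\coth(\pi z)/z^3$ around a large square, or by taking an $\alpha\to\beta=\pi$ limit in a weight-$3$ companion of~\eqref{eq:tanh-tanh}.

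Combining (i) and (ii) I will conclude $\psi(x) = 3x^2 + O(x^3\log x) = 3x^2 + o(x^2)$ as $x\to 0$, and division by $\mu$ yields $\phi_0(x) \sim (3/\mu)x^2$, as asserted. The one genuine obstacle is justifying the Ramanujan-type identity used to eliminate the $x$-coefficient; once that is in hand, the rest of the argument is routine Taylor expansion, with convergence guaranteed by the exponential smallness of the $d_n$.
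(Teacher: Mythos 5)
Your proposal is correct and follows essentially the same route as the paper: both arguments come down to showing that the coefficient of $x$ in the expansion of $\psi$ vanishes, which in each case reduces to the identity $\sum_{n \text{ odd}} n^{-3}\tanh(n\pi/2) = \pi^3/32$ (the paper cites exactly this from Berndt, so your ``genuine obstacle'' is already settled by the same reference), with all remaining terms visibly $o(x^2)$. The only cosmetic difference is that the paper organizes the cancellation as $\sum_{n\text{ odd}} n a_n = 0$ and subtracts $n\pi x$ from $\sin n\pi x$ inside the series, whereas you collect the linear coefficient directly from the expansion~\eqref{eq:psi-series}; the two computations are the same.
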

\begin{proof}
First note that, using~\cite[p.~287]{berndt} to evaluate the sum involving $\tanh$,
\begin{equation}
\label{eq:psi-cancellation}
  \sum_{n \text{ odd}} n a_n =  
	4 \sum_{n \text{ odd}} \frac{\tanh (  n \pi / 2 )}{n^3} 
 - \pi  \sum_{n \text{ odd}} \frac{1}{n^2} = 
\frac{\pi^3}{8} - \frac{3\pi}{4} \zeta(2) = 0 .
\end{equation}
Then from~\eqref{eq:psi-def} with~\eqref{eq:S-def}, \eqref{eq:s3odd} and \eqref{eq:psi-cancellation} we obtain, as an alternative to~\eqref{eq:psi3},
\begin{align}
\label{eq:psi-near-zero} 
\psi (x) & = \frac{24}{\pi^4} \sum_{n \text{ odd}}  a_n \bigl(  \sin n \pi x - n \pi x \bigr) \nonumber\\
& = \frac{96}{\pi^4} S(x) + 3 x^2 - \frac{84\zeta(3)}{\pi^3} x - \frac{96}{\pi^4} \sum_{n \text{ odd}} \frac{d_n}{n^4} \bigl( \sin n \pi x - n\pi x \bigr) .
\end{align}
Here $S(x) = \frac{7}{8} \pi x \zeta(3)  + o(x^2)$ as $x \to 0$, by~\eqref{eq:s4odd}.
Since $| y - \sin y | = O (y^3)$ as $y \to 0$, and $d_n = O ( \re^{-n \pi})$,
the final sum in~\eqref{eq:psi-near-zero} is absolutely convergent, uniformly for $x \in [0,1]$, and hence is $O(x^3)$.
Thus~\eqref{eq:psi-near-zero} gives $\psi (x) \sim 3 x^2$  as $x \to 0$.
\end{proof}

\section{Limiting gap statistics}
\label{sec:gap-tails}

This section contributes to the proof of Theorem~\ref{thm:gap-statistics}, by establishing the
corresponding limit statements for the approximating interval-splitting process appearing in Theorem~\ref{thm:small-lambda}, building on work of Brennan \& Durrett~\cite{bd1,bd2}. We work in a more general setting to emphasize which elements of $r_0$ and $\phi_0$
contribute to the tail asymptotics of the normalized gap density $g_0$.
Also, because the approximation between the nucleation process and the interval-splitting limit works well only for large times (see \S\ref{sec:fixed-rate-regime}), we derive our results
on the interval-splitting process started from arbitrary initial conditions. To this end,
for $n_0 \in \ZP$ and $z \in \Delta_{n_0}$,
we write $\Prnz$ for the law of the interval-splitting process
$\cS = (\cS_{n_0}, \cS_{n_1}, \ldots, )$
with $\cS_{n_0} = z$ and evolving for $n \geq n_0$ according to~\eqref{eq:interval-splitting} with parameters $r$ and $\Phi$. Here is the main result of this section.

\begin{theorem}
\label{thm:general-splitting-distribution}
Let $\alpha, b \in (0,\infty)$, $\beta \in \RP$, $r(\ell) = \ell^\alpha$,
and $\phi$ be a bounded probability density on $[0,1]$
with $\phi(x) = \phi(1-x)$ for all $x \in [0,1]$ 
and $\phi (x) \sim b x^\beta$ as $x \to 0$.
Define $\Phi(B) = \int_B \phi(x) \ud x$ for all $B \in \cB$.
Let $\cS$ be an interval-splitting process with parameters $r$ and $\Phi$, and let $\ell_{n,i}$, $i \in [n+1]$,
denote the lengths of the gaps in $\cS_n$. For $x \in [0,1]$, let $C_n (x) = \max \{ m \in \{0, 1, \ldots, n+1 \} : \sum_{i=1}^m \ell_{n,i} \leq x \}$.
\begin{itemize}
\item[(i)] 
For all $n_0 \in \ZP$ and
  all $z \in \Delta_{n_0}$, $\lim_{n \to \infty} \sup_{x \in [0,1]} | n^{-1} C_n(x) -  x  | = 0$, $\Prnz$-a.s.
\item[(ii)] There exists a continuous probability density function~$g$ on $\RP$ such that for all $n_0 \in \ZP$, 
  all $z \in \Delta_{n_0}$, and all $x \in \RP$,
\begin{equation} 
\label{eq:gap-distribution-limit}
  \lim_{n \to \infty} \frac{1}{n+1} \sum_{i \in [n+1]} \1 { (n+1) \ell_{n,i} \leq x }  = \int_0^x g(y) \ud y  , \text{ $\Prnz$-a.s.~and in $L^1$.} \end{equation}
\item[(iii)]  There exist constants $c_{g,0}, c_{g,\infty}, \theta \in (0,\infty)$ such that 
\[ g (x) \sim c_{g,0} \, x^{\beta}, \text{ as $x \to 0$, and } 
 g (x) \sim c_{g,\infty}\,  x^{2a-2}   \exp ( - \theta x^\alpha) , \text{ as $x \to \infty$}, \]
where in the latter case, $a = \lim_{x \to 0} \phi(x)$.
\end{itemize}
\end{theorem}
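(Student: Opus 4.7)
The plan is to obtain parts (i) and (ii) by specialising the general interval-splitting convergence theorems of Brennan \& Durrett~\cite{bd1,bd2} to our parameters $r(\ell) = \ell^\alpha$ and symmetric $\phi$, and then to derive the tail asymptotics in (iii) from direct analysis of the distributional fixed-point equation satisfied by the limiting density $g$. For part~(i), the symmetry $\phi(x) = \phi(1-x)$ forces the mean relative split position to be $1/2$, so the LLN for island locations from~\cite{bd1} yields pointwise a.s.\ convergence $n^{-1} C_n(x) \to x$; monotonicity of $C_n(x)$ in $x$ and continuity of the limit then upgrade this to uniform convergence by a Polya-type argument. Arbitrary initial configurations $z \in \Delta_{n_0}$ are handled by observing that the fixed initial islands carry $O(1/n)$ empirical weight and so do not affect the limit; equivalently, the Brennan--Durrett argument is robust to the initial data. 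Part~(ii) similarly follows from the Brennan--Durrett convergence theorem for the normalized gap empirical distribution; the limiting density $g$ is characterised by a distributional fixed-point equation involving $\phi$ and the size-biasing factor $r(\ell) = \ell^\alpha$, from which continuity of $g$ is obtained by a bootstrap, and $L^1$-convergence in~\eqref{eq:gap-distribution-limit} follows from a.s.\ convergence together with uniform integrability.

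For part (iii), the near-zero asymptotic $g(x) \sim c_{g,0} x^\beta$ comes directly from substituting $\phi(v) \sim b v^\beta$ into the fixed-point equation: a normalized gap of very small size $x$ can only be produced by splitting a larger gap $y$ at relative position $v = x/y \to 0$, and the dominant contribution scales as $v^\beta = (x/y)^\beta$, giving $g(x) \asymp x^\beta$ after integrating against the size-biased gap density, with the constant $c_{g,0}$ determined self-consistently. The tail asymptotic at infinity is more delicate. Heuristically, a normalised gap of large size $x$ at stage $n$ has absolute size $x/n$; by part~(i) the denominator $\sum_i \ell_{n,i}^\alpha$ concentrates on $c_0 n^{1-\alpha}$ for a constant $c_0$, so the per-step splitting rate of this gap is $\asymp x^\alpha / n$. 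Summing these rates over the lifetime during which the normalised gap stays near $x$ produces a survival probability of order $\exp(-\theta x^\alpha)$, with $\theta$ determined self-consistently from a moment of $g$. The polynomial prefactor $x^{2a-2}$ arises from the boundary of the fixed-point integral equation at $v \to 0$, where $\phi(v) \to a$ controls the rate at which anomalously large gaps are produced by near-endpoint splits.

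The main obstacle will be making the tail-at-infinity argument rigorous, particularly identifying the precise polynomial exponent $2a-2$. I would attempt this through a Laplace/saddle-point analysis of the fixed-point equation: write $g(y) = y^{2a-2} \exp(-\theta y^\alpha) h(y)$ and derive consistency conditions on $\theta$ and on the slowly-varying remainder $h$. A complementary probabilistic approach would be to track a size-biased gap backwards through its splitting events and control survival probabilities via large-deviations estimates that exploit the uniform asymptotics from part~(i). In either strategy, the delicate step is balancing the bulk contribution of the integral equation, where $\phi$ is smooth and $g$ is regular, against the near-zero boundary behaviour of $\phi$ that governs the creation of anomalously large gaps; pinning down the $2a-2$ exponent rather than merely establishing a logarithmic-scale bound is what distinguishes this from a routine tail estimate.
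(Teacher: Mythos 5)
Your plan for parts (i) and (ii) follows the paper's route: specialise the Brennan--Durrett continuous-time results, translate to discrete time via the splitting times, and handle an arbitrary initial configuration $z \in \Delta_{n_0}$ by exploiting the independence of the evolution on the initial gaps together with the space--time scaling of each sub-process (your remark that the initial islands carry $O(1/n)$ empirical weight is not by itself the right mechanism, but your alternative phrasing --- robustness of the Brennan--Durrett argument to initial data --- is what the paper actually establishes). The near-zero asymptotic in (iii) is also in the spirit of the paper's argument, though the paper needs a genuine bootstrap: starting from boundedness of $f_Z$ and iterating the integral inequality derived from the fixed-point equation, improving the polynomial exponent by $1/2$ at each step until the exponent $(2+\beta)/\alpha$ is reached, and only then extracting the sharp constant. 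Your one-line substitution heuristic would not by itself rule out a worse rate near zero.

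The genuine gap is the tail at infinity, and you have correctly identified it as the hard step but not supplied the idea that closes it. Your proposed ansatz $g(y)=y^{2a-2}\exp(-\theta y^\alpha)h(y)$ with self-consistency conditions runs into the standard difficulty with asymptotic analysis of integral fixed-point equations: even if the ansatz is consistent, one must show that the actual solution (which is only known to exist and be unique as a distribution) has this asymptotic form, i.e.\ one needs a stability or uniqueness-of-asymptotics argument, and the equation relates $f_Z(r)$ to an integral of $f_Z$ over the whole half-line, so local matching does not suffice. The paper circumvents this entirely by using Brennan \& Durrett's explicit product formula for the moment generating function, $m_Z(t)=\sum_{k\ge 0}t^k\prod_{j=1}^k(1-h(j\alpha))^{-1}$ with $h$ the Laplace transform of $f_X$; the asymptotic $h(t)\sim 2b\,\Gamma(1+\beta)t^{-1-\beta}$ (Laplace's method, driven by $\phi(x)\sim bx^\beta$ at $0$) yields $\tilde m_Z(t)=m_Z(1-t)\sim \tilde c\, t^{-1-(2a/\alpha)}$ as $t\downarrow 0$, and a monotone-density Tauberian theorem converts this into $f_Z(r)\sim c\,r^{2a/\alpha}\re^{-r}$. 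The Tauberian step itself requires the monotonicity of $\re^r f_Z(r)$, which the paper gets from the representation $Z\eqd Y+\xi$ with $\xi$ exponential --- another ingredient absent from your outline. Finally the prefactor $x^{2a-2}$ for $g$ comes from pushing the $f_Z$ tail through the relation $q(r)=\alpha r^{\alpha-1}\int_0^\infty \re^{\alpha t}f_T(t)f_Z(\re^{\alpha t}r^\alpha)\ud t$ by a second application of Laplace's method, combined with $g(x)=q(x/\rho)/(\rho x)$; it is this chain, not a boundary analysis of the fixed-point equation at $v\to 0$, that produces the exponent $2a-2$. Without the moment-generating-function and Tauberian machinery (or a worked-out substitute), part (iii) at infinity remains unproved in your proposal.
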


\begin{remark}
\label{rem:uniform-g}
In the case of a uniform splitting distribution, where $\phi(x) \equiv 1$ for $x \in [0,1]$, one has the explicit expression (see Remark~\ref{rem:explicit-f-Z-q} below) that 
\[ g(x) = \frac{\alpha \Gamma (2/\alpha)}{\Gamma (1/\alpha)^2} \exp \left\{ - \left( \frac{ \Gamma (2/\alpha)}{\Gamma (1/\alpha)} \right)^\alpha x^\alpha \right\}. \]
\end{remark}

There are two parts to the proof of Theorem~\ref{thm:general-splitting-distribution}. One is to translate the results of~\cite{bd1,bd2},
which pertain to a continuous-time   interval-splitting model started from a unit interval,
to our setting, to obtain a characterization of the density~$g$
in terms of distributional fixed-point equations. The second part of the proof is 
an analysis of these fixed-point equations to obtain the tail asymptotics. We start with the second part.

The fixed-point description goes as follows. Let $X$ and $T$ be random variables on $\RP$
with probability density functions $f_X$ and $f_T$ respectively, given by
\[ f_X (x) = 2 \re^{-2x} \phi (\re^{-x} ), \text{ and } f_T (x) = c_T \int_0^{\re^{-x}} s \phi(s) \ud s, \text{ where } \frac{1}{c_T} = \int_0^\infty u \re^{-2u} \phi (\re^{-u} ) \ud u  .\]
Define the distribution of random variables $Q$ and $Z$ via the  fixed-point equation 
\begin{equation}
\label{eq:fixed-point}
 (Q ,Z ) \eqd (Z  \re^{-\alpha T}, Z \re^{- \alpha X} + \xi ) , ~~ Q \geq 0, \, Z \geq 0 ,\end{equation}
where the $Z, T, X$, and $\xi$ on the right-hand side are independent, and 
$\xi$ is exponentially distributed with unit mean.
The second coordinate equality in~\eqref{eq:fixed-point} determines uniquely the distribution of~$Z$  by e.g.~Theorem~1.5(i) and Lemma~1.4(a) of~\cite{vervaat};
the first coordinate equality then specifies the distribution of~$Q$. 
We will show that the $g$ in~\eqref{eq:gap-distribution-limit} is given 
in terms of the density $q$ of the random variable $Q^{1/\alpha}$; the next result gives
asymptotics for $q$.

\begin{lemma}
\label{lem:fixed-point}
Let $\alpha, b \in (0,\infty)$, $\beta \in \RP$, $r(\ell) = \ell^\alpha$,
and $\phi$ be a bounded probability density on $[0,1]$
with $\phi(x) = \phi(1-x)$ for all $x \in [0,1]$ 
and $\phi (x) \sim b x^\beta$ as $x \to 0$.
Then the random variable $Q^{1/\alpha}$
whose distribution is characterized by~\eqref{eq:fixed-point} has  a density~$q$ which is continuous on $\RP$, and 
there exist constants $c_{q,0}, c_{q,\infty} \in (0,\infty)$ such that 
\[ q (x) \sim c_{q,0} \, x^{1+\beta}, \text{ as $x \to 0$, and } 
 q (x) \sim c_{q,\infty} \, x^{2a-1}   \exp ( - x^\alpha) , \text{ as $x \to \infty$}, \]
where in the latter case, $a = \lim_{x \to 0} \phi(x)$.
\end{lemma}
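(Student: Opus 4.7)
The plan is to derive a convolution representation for the density $q$ of $Q^{1/\alpha}$ and then extract the two asymptotics using sharp control on the density $f_Z$ of the perpetuity $Z$. Setting $Y := Z^{1/\alpha}$ and $U := \re^{-T}$, the fixed-point equation~\eqref{eq:fixed-point} identifies $Q^{1/\alpha}$ in law with the product $YU$ of independent factors, while the second coordinate $Z \eqd WZ' + \xi$, with $W := \re^{-\alpha X} \in (0,1]$ and $\xi$ unit-mean exponential independent of the independent copy $Z'$, exhibits $Z$'s law as a convolution with $\xi$ and equips $Z$ with a continuous density satisfying
\[ f_Z(z) = \re^{-z} \int_0^z \re^s f_{WZ'}(s) \ud s, \qquad z > 0. \]
Combining this with the product-density formula yields, for $x > 0$,
\[ q(x) = \int_x^\infty f_Y(y)\, f_U(x/y)\, \frac{\ud y}{y}, \quad f_Y(y) = \alpha y^{\alpha-1} f_Z(y^\alpha), \quad f_U(u) = \frac{f_T(-\log u)}{u}, \]
and continuity of $q$ on $\RP$ follows by dominated convergence.

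For the small-$x$ asymptotic, the main input is the tail of $f_T$: the definition of $f_T$ together with $\phi(s) \sim b s^\beta$ as $s \to 0$ gives $f_T(t) \sim c_T b (\beta+2)^{-1} \re^{-(\beta+2)t}$ as $t \to \infty$, equivalently $f_U(u) \sim c_T b (\beta+2)^{-1} u^{\beta+1}$ as $u \to 0$. Substituting into the product formula and passing to the limit under the integral, justified via an $O(z)$ upper bound on $f_Z(z)$ near zero obtained from the convolution formula, yields
\[ q(x) \sim \frac{c_T b}{\beta+2}\, \Exp [ Y^{-(\beta+2)} ]\, x^{\beta+1} \quad \text{as } x \to 0, \]
provided $\Exp[Y^{-(\beta+2)}] = \Exp[Z^{-(\beta+2)/\alpha}] < \infty$, which follows from the convolution lower bound $Z \geq \xi$ combined with the near-zero control on $f_Z$.

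The large-$x$ asymptotic rests on proving
\[ f_Z(z) \sim C_Z\, z^{2a/\alpha}\, \re^{-z} \quad \text{as } z \to \infty, \]
for some $C_Z \in (0,\infty)$. Heuristically, differentiating the convolution formula yields $f_Z'(z) + f_Z(z) = f_{WZ'}(z)$; substituting the ansatz $f_Z(z) = \phi(z) \re^{-z}$ and evaluating $f_{WZ'}(z) = \int_0^1 w^{-1} f_W(w) f_Z(z/w) \ud w$ by Laplace's method, dominated by $w$ close to~$1$ (where $f_W(w) \sim (2b/\alpha^{\beta+1})(1-w)^\beta$), gives the first-order ODE $\phi'(z)/\phi(z) \sim 2a/(\alpha z)$, whose leading solution is $\phi(z) \sim C_Z z^{2a/\alpha}$. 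I would make this rigorous either by a Tauberian analysis of the functional equation $(1+s)\varphi(s) = \Exp[\varphi(sW)]$ for the Laplace transform $\varphi(s) := \Exp[\re^{-sZ}]$, or by bootstrapping the exact identity $\Pr(Z > z) = \Pr(WZ' > z) + f_Z(z)$ (a direct consequence of the perpetuity equation and the memorylessness of $\xi$) into matching upper and lower bounds on $f_Z$. Once the tail of $f_Z$ is established, applying Laplace's method to $\Pr(Q > q) = \int_0^\infty \Pr(Z > q\re^{\alpha t}) f_T(t) \ud t$ via the substitution $s = q(\re^{\alpha t}-1)$ (dominant contribution from $t$ near $0$) gives $\Pr(Q > q) \sim C' q^{2a/\alpha - 1} \re^{-q}$, and differentiating under the change of variables $x = q^{1/\alpha}$ produces $q(x) \sim c_{q,\infty} x^{2a-1} \re^{-x^\alpha}$, as required.

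The main obstacle is the rigorous derivation of the tail of $f_Z$: the bounded-multiplier regime $W \leq 1$ means the classical Cram\'er condition $\Exp[W^\kappa] = 1$ has no positive solution, so standard perpetuity tail theorems do not apply directly, and the exponent $2a/\alpha$ is instead dictated by the density of $W$ at the top $\{w=1\}$ of its support.
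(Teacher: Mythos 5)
Your overall architecture matches the paper's: the product representation $Q^{1/\alpha}=Z^{1/\alpha}\re^{-T}$, the convolution identity for $f_Z$ coming from $Z\eqd W Z'+\xi$, the small-$x$ asymptotic driven by $f_T(t)\sim c_{T}b(2+\beta)^{-1}\re^{-(2+\beta)t}$ together with $\Exp[Z^{-(2+\beta)/\alpha}]<\infty$, and the large-$x$ asymptotic reduced to a tail estimate for $f_Z$ followed by Laplace's method. The constant you obtain at zero, $c_T b(2+\beta)^{-1}\Exp[Y^{-(2+\beta)}]$, agrees with the paper's $b c_1 c_T/(2+\beta)$.

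However, there is a genuine gap: the upper tail $f_Z(z)\sim C_Z z^{2a/\alpha}\re^{-z}$ is exactly the hard step, and you leave it at the level of a heuristic ODE plus two unexecuted strategies. Neither strategy, as sketched, closes the argument. A ``Tauberian analysis'' of $(1+s)\varphi(s)=\Exp[\varphi(sW)]$, or differentiating the asymptotic for $\Pr(Q>q)$, requires a monotonicity or regularity input to pass from transform (or integrated-tail) asymptotics to \emph{density} asymptotics; without one, differentiating an asymptotic relation is invalid. The paper supplies precisely this by observing that $\frac{\ud}{\ud r}(\re^r f_Z(r))=\re^r f_{WZ'}(r)\geq 0$, so $\re^r f_Z(r)$ is non-decreasing and a monotone-density Tauberian theorem applies; it then gets the behaviour of $m_Z(t)$ as $t\uparrow 1$ not from the functional equation directly but from Brennan--Durrett's explicit product formula $\Exp[\re^{tZ}]=\sum_k t^k\prod_{j\le k}(1-h(j\alpha))^{-1}$, Laplace asymptotics for $h(t)=\Exp[\re^{-tX}]$, and an Abelian theorem for power series. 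You would need to supply both of these ingredients (or genuine substitutes) for your proof to stand. A secondary issue: for the small-$x$ part you justify $\Exp[Y^{-(2+\beta)}]<\infty$ via ``an $O(z)$ upper bound on $f_Z(z)$ near zero,'' but $f_Z(z)=O(z)$ gives $\int_0 z^{-(2+\beta)/\alpha}f_Z(z)\,\ud z<\infty$ only when $(2+\beta)/\alpha<2$, so it does not cover the general hypotheses of the lemma; the paper needs an iterative bootstrap of the fixed-point equation to reach $f_Z(z)=O(z^{(2+\beta)/\alpha})$, and a single pass through the convolution formula does not produce even $O(z)$ without using the $z^{\beta/\alpha}$ vanishing of $f_W$ at the origin.
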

\begin{proof}
Let $F_Z(r) := \Pr (Z \leq r)$. By~\eqref{eq:fixed-point}, conditioning on $\xi$ and then $X$, 
for $r \geq 0$,
\begin{align*}
 F_Z (r) & = \int_0^r \re^{-u} \Pr ( Z \re^{-\alpha X} \leq r - u )  \ud u \\
& = 2 \int_0^r \ud u \re^{-u}  \int_0^\infty \re^{-2x} \phi ( \re^{-x} ) F_Z ( (r-u) \re^{\alpha x} ) \ud x . \end{align*}
 With the change of variable $v = r-u$, this says
\[  F_Z (r) = 2 \re^{-r} \int_0^r \ud v  \re^v \int_0^\infty \re^{-2x} \phi ( \re^{-x} ) F_Z ( v \re^{\alpha x} ) \ud x , \]
which is  continuously differentiable,  so $f_Z (r) := F'_Z(r)$ exists and is continuous. Also
\begin{align*}
\Pr ( Z \re^{-\alpha X} \leq r )   = 2 \int_0^\infty \re^{-2x} \phi ( \re^{-x} ) F_Z ( r \re^{\alpha x} ) \ud x.\end{align*}
Since $F_Z$ is continuously differentiable, we can differentiate under the integral to get that $Y := Z \re^{-\alpha X}$  has a density $f_Y$ satisfying
\begin{align*} f_Y (r)  =  2 \int_0^\infty \re^{(\alpha-2)x} \phi ( \re^{-x} ) f_Z ( r \re^{\alpha x} ) \ud x.\end{align*}
Then since $Z$ is distributed as $Y + \xi$, we can write
\begin{align*}
F_Z (r)
 = \int_0^r \! f_Y (y) \Pr ( \xi \leq r -y ) \ud y 
  = 2 \int_0^r \! \ud y (1 - \re^{-(r-y)}) \int_0^\infty\! \re^{(\alpha-2)x} \phi ( \re^{-x} ) f_Z ( y \re^{\alpha x} ) \ud x .\end{align*}
Differentiating we obtain  
\begin{align*}
f_Z (r) = 2 \re^{-r} \int_0^r \ud y \re^y \int_0^\infty \re^{(\alpha-2)x} \phi ( \re^{-x} ) f_Z ( y \re^{\alpha x} ) \ud x .
\end{align*}
With the substitution $u = y \re^{\alpha x}$, we get, for $r \geq 0$,
\begin{align}
\label{eq:f-Z-fixed-point}
 f_Z (r)  = \frac{2}{\alpha} \re^{-r} \int_0^r \ud y \re^y y^{\frac{2-\alpha}{\alpha}} \int_y^\infty u^{-\frac{2}{\alpha}} \phi \bigl( (y/u)^{1/\alpha} \bigr) f_Z ( u ) \ud u . \end{align}

We use the relation~\eqref{eq:f-Z-fixed-point} to derive asymptotics of $f_Z(r)$ as $r \to 0$.
Fix $\eps > 0$. 
For $u \geq Ky$, $K>1$, we have $y/u \leq  1/K$. Choosing $K > 1$ large
 enough (depending on $\eps$), this means $\phi  ( (y/u)^{1/\alpha}  ) \leq ( b + \eps) (y/u)^{\beta/\alpha}$
for all $y >0$ and all $u \geq Ky$.
Hence
\[  \int_{Ky }^\infty u^{-\frac{2}{\alpha}} \phi \bigl( (y/u)^{1/\alpha} \bigr) f_Z ( u ) \ud u
\leq (b + \eps ) y^{\beta/\alpha} \int_{Ky}^\infty u^{-\frac{2+\beta}{\alpha}} f_Z ( u ) \ud u .\]
On the other hand, let 
$A := \sup_{x \in [0,1]} \phi (x)$, which is finite. 
Then, for all $y \in \RP$,
\[ \int_y^{Ky} u^{-\frac{2}{\alpha}} \phi \bigl( (y/u)^{1/\alpha} \bigr) f_Z ( u ) \ud u 
\leq A  B(K y) \int_y^{Ky} u^{-\frac{2}{\alpha}} \ud u, \text{ where } B(y) := \sup_{0 \leq u \leq y} f_Z (u) .\]
It follows from~\eqref{eq:f-Z-fixed-point} that for $C$ a finite constant depending on $K$,
for all $r \in \RP$,
\begin{equation}
\label{eq:f-Z-iteration}
 f_Z (r) \leq C  \int_0^r  B (Ky) \ud y
+ \frac{2}{\alpha} (b + \eps ) \int_0^r \ud y  y^{\frac{2+\beta-\alpha}{\alpha}} \int_{Ky}^\infty u^{-\frac{2+\beta}{\alpha}} f_Z ( u ) \ud u .\end{equation}
We apply~\eqref{eq:f-Z-iteration} successively to get a bound.
Let  $r_k = K^{-k}$.
Suppose that for constants $C_k, \gamma_k \in \RP$ we have $f_Z (r) \leq C_k r^{\gamma_k}$ for $r \in [0,r_k]$.
We bound the  $u$-integral in~\eqref{eq:f-Z-iteration} via
\begin{align*} 
\int_{Ky}^\infty u^{-\frac{2+\beta}{\alpha}} f_Z ( u ) \ud u 
& \leq C_k \int_{Ky}^{r_k} u^{\gamma_k - \frac{2+\beta}{\alpha}} \ud u
+ r_k^{-\frac{2+\beta}{\alpha}} \\
& \leq C_{k+1} + C_{k+1}  y^{1 +\gamma_k - \frac{2+\beta}{\alpha}} \log (1/y) ,\end{align*}
for some $C_{k+1} < \infty$ and all $y \in [0,r_{k+1}]$. 
Thus from~\eqref{eq:f-Z-iteration} we get, for all $r \in [0,r_{k+1}]$,
\[ f_Z (r) \leq C_{k+1}  r^{1+\gamma_k} \log (1/r)    + C_{k+1} r^{\frac{2+\beta}{\alpha}} \leq C_{k+1} r^{\gamma_{k+1}} , 
\text{ where } \gamma_{k+1}  = \left( \tfrac{1}{2} + \gamma_k \right) \wedge \left( \tfrac{2+\beta}{\alpha} \right) , \]
redefining $C_{k+1}$ as necessary. Starting with the bound
$f_Z(r) \leq C_0 = \sup_{x \in[0,1]} f_Z(x) < \infty$ for $r \in [0,r_0]$, we iterate this argument
from 
$\gamma_0 = 0$ to get, for some finite $k$,
\begin{equation}
\label{eq:Z-first-bound-near-zero}
f_Z (r) \leq C_k r^{\frac{2+\beta}{\alpha}}, \text{ for all } r \in [0,r_k]. \end{equation}
Using the bound~\eqref{eq:Z-first-bound-near-zero} now in~\eqref{eq:f-Z-iteration} shows that,
for all $r$ sufficiently small,
\begin{equation}
\label{eq:f-Z-second-bound-near-zero}  f_Z (r) \leq \frac{2}{2+\beta} c_1 (b + \eps ) r^{\frac{2+\beta}{\alpha}}  , \text{ where } c_1 = \int_{0}^\infty u^{-\frac{2+\beta}{\alpha}} f_Z ( u ) \ud u ,
\end{equation}
the $c_1$ being a finite positive constant, since~\eqref{eq:Z-first-bound-near-zero} shows that the integral does not blow up near zero.
The other direction is similar: from~\eqref{eq:f-Z-fixed-point} we have
\begin{align*}
  f_Z (r) & \geq \frac{2}{\alpha} (b -\eps)  \re^{-r} \int_0^r \ud y  y^{\frac{2+\beta-\alpha}{\alpha}} \int_{Ky}^\infty u^{-\frac{2+\beta}{\alpha}}  f_Z ( u ) \ud u \\
	& \geq \frac{2}{2+\beta} c_1 (b -\eps)  \re^{-r} r^{\frac{2+\beta}{\alpha}}
	- C \int_0^r \ud y  y^{\frac{2+\beta-\alpha}{\alpha}} \int_{0}^{Ky} u^{-\frac{2+\beta}{\alpha}}  f_Z ( u ) \ud u  .\end{align*}
With the upper bound from~\eqref{eq:f-Z-second-bound-near-zero} we get that the negative term here is $O (r^{1+\frac{2+\beta}{\alpha}} )$ as $r \to 0$.
Since $\eps >0$ was arbitrary, we  conclude that
\begin{equation}
\label{eq:f-Z-near-zero}
 f_Z (r) = ( c_{Z,0} +o(1))  r^{\frac{2+\beta}{\alpha}}, \text{ as } r \to 0 ,\end{equation}
where $c_{Z,0} := \frac{2b c_1}{2+\beta}$, with $c_1$ defined in~\eqref{eq:f-Z-second-bound-near-zero}.

Now we turn to the random variable $Q^{1/\alpha}$.
Note that the density $f_T$ of $T$ satisfies $f_T (t) \sim c_{T,\infty} \re^{-(2+\beta)t}$ as $t \to \infty$ where $c_{T,\infty} := \frac{b c_T}{2+\beta} \in (0,\infty)$.
It follows that 
$f_T (t) \leq C \re^{-(2+\beta)t}$ for some $C < \infty$ and all $t \in \RP$.
For $r \in \R$ we have from~\eqref{eq:fixed-point} that
\begin{align*} \Pr ( - \log Q > r\alpha ) & =  \Pr ( \alpha T - \log Z > r\alpha ) = \int_0^\infty f_T (t) F_Z ( \re^{(t - r)\alpha} ) \ud t .\end{align*}
Since $F_Z$ is continuously differentiable,   
and since $\Pr ( - \log Q > r\alpha ) = \Pr ( Q^{1/\alpha} \leq \re^{-r} )$,
we can
differentiate under the integral sign to see that $Q^{1/\alpha}$ has a density~$q$ which satisfies
\begin{align}
    \label{eq:f-q-formula}
 \re^{(\alpha-1)r } q( \re^{-r} ) & =   \alpha  \int_0^\infty \re^{\alpha t} f_T (t) f_Z ( \re^{\alpha (t-r)} ) \ud t  \nonumber\\
& = \re^{\alpha r} \int_{\re^{-\alpha r}}^\infty f_T \left( \frac{\log u}{\alpha} + r \right) f_Z (u) \ud u
 .
\end{align}
 Here $f_T$ is bounded and continuous, so the dominated convergence theorem shows that the second integral in~\eqref{eq:f-q-formula} is continuous over $r \in \R$, and hence $q(r)$ is continuous over $r \in (0,\infty)$.
We now use the first integral in~\eqref{eq:f-q-formula}
to derive the   asymptotics of $q$ near zero. 
By~\eqref{eq:f-Z-near-zero} there exists $C < \infty$ such that $f_Z ( r ) \leq C r^{\frac{2+\beta}{\alpha}}$
for all $r \in \RP$. 
 Thus
\[ 
\int_0^{r/2}   \re^{\alpha t} f_T (t)  f_Z ( \re^{\alpha (t - r)} ) \ud t 
\leq C \re^{- (2+\beta) r} \int_0^{r/2} \re^{\alpha t} \ud t
\leq C \re^{\alpha r /2} \re^{- (2+\beta) r} .\]
Similarly, for any $\eps >0$ and all $t > r/2$ with $r$ sufficiently large,
\begin{align*}
 \int_{r/2}^{\infty}  \re^{\alpha t} f_T (t) f_Z ( \re^{\alpha (t - r)} ) \ud t 
& \leq (c_{T,\infty} + \eps) \int_{r/2}^{\infty}  \re^{\alpha t} \re^{-(2+\beta) t} f_Z ( \re^{\alpha (t - r) } ) \ud t \\
& = (c_{T,\infty} + \eps)  \re^{\alpha r-(2+\beta) r} \int_{-r/2}^\infty \re^{\alpha s -(2+\beta)s} f_Z ( \re^{\alpha s} ) \ud s , \end{align*}
using the change of variable $s = t - r$.
The $r \to \infty$ limit of the $s$-integral here converges to $c_1/\alpha$, with $c_1$ the integral defined at~\eqref{eq:f-Z-second-bound-near-zero}. 
Thus we get, for all $r$ sufficiently large,
\[  \int_{r/2}^{\infty}  \re^{\alpha t}  f_T (t) f_Z ( \re^{\alpha (t - r) } ) \ud t  \leq \left( \frac{c_1 c_{T,\infty}}{\alpha} + \eps \right) 
 \re^{\alpha r -(2+\beta) r}  .\]
A similar argument in the other direction shows that, for all $r$ sufficiently large,
\[  \int_{r/2}^{\infty}  \re^{\alpha t}  f_T (t) f_Z ( \re^{\alpha (t - r)} ) \ud t
\geq \left( \frac{c_1 c_{T,\infty}}{\alpha} - \eps \right)  \re^{\alpha r -(2+\beta) r} .\]
It follows from~\eqref{eq:f-q-formula} and the above estimates that
\begin{equation}
\label{eq:F-Q-near-zero}
q(r)  = ( c_{q,0} + o(1) ) r^{1 + \beta} , \text{ as } r \to 0 ,
\end{equation}
where $c_{q,0} :=\frac{b c_1 c_T}{2+\beta}$, with $c_1$ as defined at~\eqref{eq:f-Z-second-bound-near-zero}.  

Next we turn to the upper tail estimates. In this case we will use Brennan \& Durrett's expression for the moment
generating function of~$Z$ and a Tauberian theorem. Brennan \& Durrett also give an expression for the moment generating
function of~$Q$, but monotonicity properties, helpful
for deducing density asymptotics via the Tauberian argument, are easier to demonstrate for~$Z$.
 Recalling that $Z$ has the same distribution as $Y + \xi$, for $Y, \xi$ independent and $\xi$ exponential with unit mean, 
we have
\[ \Pr ( Z \leq r ) = \int_0^r \re^{-s} \Pr ( Y \leq r -s ) \ud s = \re^{-r} \int_0^r \re^u \Pr ( Y \leq u ) \ud u .\]
Differentiation gives
$f_Z (r) = \Pr ( Y \leq r ) - \Pr ( Z \leq r)$, and so $f'_Z (r) = f_Y (r) - f_Z (r)$.
Thus
\[ \frac{\ud}{\ud r} \bigl( \re^r f_Z (r) \bigr) = \re^r \bigl( f'_Z(r) + f_Z (r) \bigr) = \re^r f_Y (r) \geq 0 .\]
Hence $\re^r f_Z(r)$ is non-decreasing; this is the helpful monotonicity property mentioned above.
Using Brennan \& Durrett's formula for the moments of $Z$~\cite[p.~114]{bd2},  we see
\begin{equation}
\label{eq:Z-mgf}
 m_Z (t) := \Exp ( \re^{tZ} ) =\sum_{k =0}^\infty t^k \prod_{j=1}^k \frac{1}{1-h(j \alpha)} ,\end{equation}
where 
\[ h (t ) := \int_0^\infty \re^{-tx} f_X (x) \ud x = 2 \int_0^\infty \re^{-(2+t)x} \phi (\re^{-x} ) \ud x.\]
Since $X$ is non-degenerate, $h(t) < 1$ for all $t >0$.
Moreover, since $\phi (\re^{-x} ) \sim \phi (x) \sim b x^\beta$ as $x \to 0$,
we may apply Laplace's method (see e.g.~\cite[pp.~55--58]{wong}) to obtain
\begin{equation}
\label{eq:h-asymptotics}
 h(t) \sim 2 b \Gamma (1+\beta) t^{-1-\beta}, \text{ as } t \to \infty .\end{equation}
 It follows from~\eqref{eq:h-asymptotics} that $m_Z(t) < \infty$ provided $| t | <1$; indeed, as we will see, the information we need is contained in the asymptotics of $m_Z(t)$ as $t \uparrow 1$.
Consider the Laplace transform $\tilde m_Z$ associated with $\re^r f_Z(r)$, namely
\[ \tilde m_Z (t) := \int_0^\infty \re^{-tx} \re^x f_Z (x) \ud x = m_Z (1-t ) ,\]
which is finite for $t \in (0,1)$. We will use a Tauberian theorem to relate the
$r \to \infty$ asymptotics of $\re^r f_Z (r)$ to the $t \to 0$ asymptotics
of $\tilde m_Z (t)$. From~\eqref{eq:Z-mgf}, we have
\begin{align}
\label{eq:Z-mgf-near-1}
 \tilde m_Z (t) & = \sum_{k =0}^\infty (1-t)^k 
\exp \sum_{j=1}^k \log \left( \frac{1}{1-h(j \alpha)} \right) .\end{align}
Here we have from~\eqref{eq:h-asymptotics} that, as $j \to \infty$,
\[  \log \left( \frac{1}{1-h(j \alpha)} \right) =  \log \left( 1 + \frac{h(j\alpha)}{1-h(j \alpha)} \right)
=  2 b \Gamma (1+\beta) (j \alpha)^{-1-\beta} + O (j^{-2-\beta}) .\]
It follows that, as $k \to \infty$,
\begin{equation}
\label{eq:Z-mgf-sum-asymptotics}
 \sum_{j=1}^k \log \left( \frac{1}{1-h(j \alpha)} \right) = \begin{cases}
\frac{2b}{\alpha} \log k + \log c_2 + o(1) & \text{if } \beta = 0, \\
\log c_2 + o(1)  & \text{if } \beta >0, \end{cases} \end{equation}
where  $c_2 \in (0,\infty)$ is a constant depending on $\alpha, \beta$, and $\phi$. 

If $\beta >0$, then~\eqref{eq:Z-mgf-near-1} and~\eqref{eq:Z-mgf-sum-asymptotics} show that $\tilde m_Z (t) = \sum_{k =0}^\infty (1-t)^k (c_2 + o(1))$,
where the $o(1)$ is as $k \to \infty$, and is uniform in $t >0$.
It is elementary to deduce that
\begin{equation}
\label{eq:mgf-Z-beta-non-zero}
 \tilde m_Z (t) \sim c_2/t , \text{ as } t \to 0 , \text{ if } \beta >0. \end{equation}
On the other hand, suppose that $\beta =0$. Then we have from~\eqref{eq:Z-mgf-near-1} and~\eqref{eq:Z-mgf-sum-asymptotics} that
$\tilde m_Z (t) = \sum_{k =0}^\infty (1-t)^k (c_2 + o(1) ) k^{2b/\alpha}$,
where the $o(1)$ is as $k \to \infty$, and is uniform in $t$.
It is a consequence of a standard Abelian theorem for power series that
$\sum_{k =0}^\infty (1-t)^k  k^{\rho}  \sim \Gamma(1+\rho) t^{-\rho-1}$ as $t \downarrow 0$.
Thus we deduce that
\begin{equation}
\label{eq:mgf-Z-beta-zero} \tilde m_Z (t) \sim \Gamma \left( 1 + \frac{2b}{\alpha} \right) c_2 t^{-\frac{2b}{\alpha} -1}, \text{ as } t \to 0 , \text{ if } \beta = 0. \end{equation}
Defining $a := \lim_{x \to 0} \phi (x)$, so $a=0$ if $\beta >0$ and $a = b$ if $\beta =0$,
we can combine the asymptotics~\eqref{eq:mgf-Z-beta-non-zero}
and~\eqref{eq:mgf-Z-beta-zero} into the single statement that, for some $\tilde c_{Z,0} \in (0,\infty)$,
\begin{equation}
\label{eq:mgf-Z-near-zero}
\tilde m_Z (t) \sim \tilde c_{Z,0} t^{-1-(2a/\alpha)}  , \text{ as } t \to 0 . \end{equation}
Together with the fact that $\re^r f_Z(r)$ is non-decreasing,
the asymptotics~\eqref{eq:mgf-Z-near-zero} 
 allow us to apply a monotone-density Tauberian theorem~(e.g.~\cite[p.~446]{feller2}) to deduce
\begin{equation}
\label{eq:Z-near-infinity}
 f_Z (r) = (c_{Z,\infty} +o(1)) r^{2a/\alpha} \re^{-r} , \text{ as } r \to \infty, \end{equation}
where $c_{Z,\infty} \in (0,\infty)$. 
Rewriting the first equality in~\eqref{eq:f-q-formula}, we have 
\begin{equation}
\label{eq:q-in-terms-of-f-Z}
 q (r) = \alpha r^{\alpha -1} \int_0^\infty \re^{\alpha t} f_T (t) f_Z ( \re^{\alpha t} r^\alpha ) \ud t .\end{equation}
From~\eqref{eq:q-in-terms-of-f-Z}, with the  change of variable $u = \re^{\alpha t}$ and
 the $f_Z$ asymptotics from~\eqref{eq:Z-near-infinity},  
\[ q(r) = (c_{Z,\infty} + o(1)) r^{2a + \alpha -1} \int_1 ^\infty f_T ( \alpha^{-1} \log u ) u^{2a/\alpha} \re^{- u r^\alpha} \ud u , \]
as $r \to \infty$.
Since $\lim_{x \to 0} f_T ( x ) = c_T/2$ (by symmetry of $\phi$), the asymptotics of the latter integral can be obtained by Laplace's method (e.g.~\cite[pp.~55--58]{wong}), which gives
\[ q (r) = ( c_{q,\infty} + o(1) ) r^{2a -1} \re^{-r^\alpha} , \text{ as } r \to \infty,\]
where $c_{q,\infty} := \frac{c_T c_{Z,\infty}}{2}$. This completes the proof.
\end{proof}

\begin{proof}[Proof of Theorem~\ref{thm:general-splitting-distribution}.]
Define the distribution function  
\begin{equation}
    \label{eq:G-def}
 G (x) :=\frac{1}{\rho} \int_0^x \frac{q (y)}{y}  \ud y, \text{ for } x \in \RP, \end{equation}
where 
$q$ 
is the density of the random variable $Q^{1/\alpha}$ and $\rho := \Exp ( Q^{-1/\alpha} ) \in (0,\infty)$. 
Brennan \& Durrett~\cite{bd2} consider a continuous-time
embedding of the interval-splitting process in which an interval
of length $\ell$ splits at rate $r(\ell) = \ell^\alpha$, and, when it splits, does so according to $\Phi$.
Starting at time $t=0$ with a single gap of length~$1$,
let $i_t$ denote the number of intervals at time $t \in \RP$ and let $e_{t,i}$, $i \in [i_t]$,
denote the lengths of those intervals, listed left to right. 
For $x \in [0,1]$, let $c_t(x) := \max \{ m \in \{ 0 , 1, \ldots, i_t \} : \sum_{i=1}^m e_{t,i} \leq x \}$,
the number of intervals wholly contained in $[0,x]$.  
 The result of~\cite[p.~113]{bd2} says that
\begin{equation}
    \label{eq:bd-limit}
    \lim_{t \to \infty} t^{-1/\alpha} i_t = \rho , \as ,
\text{ and } \lim_{t \to \infty} \frac{1}{i_t} \sum_{i \in [i_t]} \1 { t^{1/\alpha} e_{t,i} \leq x } = G(x) , \as ,\end{equation}
where $G$ is given by~\eqref{eq:G-def}, 
while Theorem~1.1 of~\cite[pp.~1027--8]{bd1} shows that
\begin{equation}
\label{eq:bd-N-limit}
\lim_{t \to \infty} \frac{c_t(x)}{i_t} = x, \as, \text{ for all } x \in [0,1] .
\end{equation}
Now we extend the  model to permit an arbitrary initial configuration $z \in \Delta_{n_0}$ at time $t = 0$.
Then the initial gaps $j \in [n_0+1]$ have lengths $u_1, \ldots, u_{n_0+1}$, say,
with $\sum_{j =1}^{n_0+1} u_j = 1$. The process
evolves independently on each gap. Let $i^z_{j,t}$ denote the number
of intervals at time $t \in \RP$ for the process restricted to initial gap $j$, and let $i^z_t = \sum_{j=1}^{n_0+1} i^z_{j,t}$
denote the total number of intervals. Also let $e^z_{j,t,i}$, $i \in [i^z_{j,t}]$, denote the interval
lengths for the process in interval~$j$.
The process in interval $j$ is a copy of the process on the single initial interval $[0,1]$, but with all lengths scaled by a factor of $u_j$,
which entails a time-scaling of $u_j^\alpha$;
in particular, $i^z_{j,t}$ has the same distribution as $i_{u^\alpha_j t}$,
and the collection $e^z_{j,t,i}$, $i \in [i^z_{j,t}]$,
 has the same distribution as $u_j e_{u^\alpha_j t, i}$, $i \in [i_{u^\alpha_j t}]$. 
Thus~\eqref{eq:bd-limit} implies that
\begin{equation}
    \label{eq:bd-limit-subgaps}
    \lim_{t \to \infty} t^{-1/\alpha} i^z_{j,t} = \rho u_j , \as ,
\text{ and } \lim_{t \to \infty} \frac{1}{i^z_{j,t}} \sum_{i \in [i^z_{j,t}]} \1 { t^{1/\alpha} e^z_{j,t,i} \leq x } = G(x) , \as \end{equation}
Also, if $e^z_{t,i}$, $i \in [i_t^z]$ are the (aggregated) interval lengths, listed left to right, then
\[  \frac{1}{i^z_{t}} \sum_{i \in [i^z_{t}]} \1 { t^{1/\alpha} e^z_{t,i} \leq x } 
=  \sum_{j \in [n_0+1]} \frac{i^z_{t,j}}{i^z_t} \frac{1}{i^z_{t,j}} \sum_{i \in [i^z_{j,t}]} \1 { t^{1/\alpha} e^z_{j,t,i} \leq x } . \]
Since $\sum_{j=1}^{n_0+1} i^z_{j,t} = i^z_t$ and $\sum_{j=1}^{n_0+1} u_j = 1$, 
we  conclude from~\eqref{eq:bd-limit-subgaps} that, for any $z \in \Delta_{n_0}$,
\begin{equation}
    \label{eq:bd-limit-2}
    \lim_{t \to \infty} t^{-1/\alpha} i^z_t = \rho , \as ,
\text{ and } \lim_{t \to \infty} \frac{1}{i^z_t} \sum_{i \in [i^z_t]} \1 { t^{1/\alpha} e^z_{t,i} \leq x } = G(x) , \as \end{equation}
If $\tau_0 =0$ and $\tau_n \in \RP$ denotes the time of the $n$th splitting event, 
then $\cS_{n_0}, \cS_{n_0+1}, \ldots$ is embedded at times $\tau_0, \tau_1, \ldots$
of the continuous-time process stated at $\cS_{n_0} = z$. 
Given $\cS_n$, $n \geq n_0$, let $\ell_{n,1},\ldots,\ell_{n,n+1}$ denote the
lengths of the gaps, so $i^z_{\tau_n} =n+1$ and $\ell_{n,i} = e^z_{\tau_n,i}$.
Translating~\eqref{eq:bd-limit-2} into discrete time thus gives
\begin{equation}
    \label{eq:bd-limit-3}
    \lim_{n \to \infty} \tau_n^{-1/\alpha} n = \rho , \as ,
\text{ and } \lim_{n \to \infty} \frac{1}{n+1} \sum_{i \in [n+1]} \1 { \tau_n^{1/\alpha} \ell_{n,i} \leq x } = G(x) , \as \end{equation}
Let $\cG_n$ denote the $\sigma$-algebra generated by $\cS_0, \ldots, \cS_n$
and $\tau_0,\ldots,\tau_n$.
Then, for $U_n$ a uniform random variable on $[n+1]$, independent of $\cG_n$,
set 
$\tl_n = (n+1) \ell_{n,U_n}$, so that
\[ \Pr ( \tl_n \leq x \mid \cG_n ) =  
  \frac{1}{n+1} \sum_{i \in [n+1]} \bbind \left\{  \tau_n^{1/\alpha} \ell_{n,i} \leq x \tau_n^{1/\alpha} (n+1)^{-1} \right\} .\]
Since, by~\eqref{eq:bd-limit-3}, $(n+1) \tau_n^{-1/\alpha} \to \rho$, a.s.,
for any $\eps >0$ and all $n$ sufficiently large,
\[  \Pr ( \tl_n \leq x \mid \cG_n ) \leq 
  \frac{1}{n+1} \sum_{i \in [n+1]} \bbind \left\{  \tau_n^{1/\alpha} \ell_{n,i} \leq x (\rho^{-1} +\eps ) \right\}, \]
  by monotonicity, so that, by~\eqref{eq:bd-limit-3}, 
$\limsup_{n \to \infty}  \Pr ( \tl_n \leq x \mid \cG_n ) \leq G( x(\rho^{-1} +\eps) )$, a.s. 
  By a similar argument in the other direction, and continuity of $G$ given at~\eqref{eq:G-def}, we get, a.s.,
  \begin{equation}
	\label{eq:g-lim}
	\lim_{n\to\infty} \Pr (\tl_n \leq x \mid \cG_n) = \frac{1}{\rho} \int_0^{x/\rho} \frac{q(y)}{y}  \ud y
  = \int_0^x g(z) \ud z , \text{ where } g(x) := \frac{q(x/\rho)}{\rho x} . \end{equation}
This establishes the a.s.~convergence result in~\eqref{eq:gap-distribution-limit}
  with $g(x)$ as displayed, and the $L^1$ convergence follows by the bounded
  convergence theorem. This proves~(ii). Moreover, Lemma~\ref{lem:fixed-point}
	shows that $g$ as defined in~\eqref{eq:g-lim}
	is continuous on $(0,\infty)$, and satisfies the asymptotics  for $g$ given in part~(iii) of the theorem,
	with $c_{g,0} = c_{q,0} \rho^{-2-\beta}$, $c_{g,\infty} = c_{q,\infty} \rho^{-2a}$, and $\theta = \rho^{-\alpha}$.
	Thus~(iii) is also proved.
	
	For part~(i), fix $x \in [0,1]$ and let $j_x = \min \{ j \in [n_0+1] : \sum_{i=1}^j u_i \geq x \}$,
so that $j_x$ is the index of the initial gap that contains~$x$.
Let $x' = \sum_{i=1}^{j_x-1} u_i$, so $0 \leq x' \leq x$.
 Then
\begin{equation}
    \label{eq:gap-aggregation}
 c^z_t(x) := \max \biggl\{ m \in \{ 0, 1, \ldots, i_t^z \} : \sum_{i=1}^m e^z_{t,i} \leq x \biggr\}
 = c^z_{j_x,t} (x) + \sum_{j < j_x} i^z_{j,t}  ,\end{equation}
where $c^z_{j_x,t} (x)$ means the number of intervals at time $t$ contained in initial gap $j_x$ (whose left endpoint is at $x'$) that fall wholly in $[0,x]$.
By scaling, $c^z_{j,t} (x)$, $i^z_{j,t}$ have the same distribution as $c_{ u_j^\alpha t}  ( \frac{x-x'}{u_{j}}  )$, $i_{u_{j}^\alpha t}$, and so we have from~\eqref{eq:bd-N-limit}
and~\eqref{eq:bd-limit-subgaps}
that 
\[   i^z_{j,t} \sim \rho t^{1/\alpha} u_{j}, \text{ for all $j$, and } c^z_{j_x,t} (x) \sim i^z_{j_x,t} \left( \frac{x-x'}{u_{j_x}} \right)  \sim \rho t^{1/\alpha} ( x - x') .\]
Together with~\eqref{eq:gap-aggregation}, this implies that
$c^z_t(x) \sim \rho t^{1/\alpha} x$. It follows from~\eqref{eq:bd-limit-2} that
$c^z_t(x)/i^z_t \to x$, a.s., and thus we get~(i) after translating the result into discrete time.
\end{proof}

\begin{remark}
\label{rem:explicit-f-Z-q}
In the special case where $\phi (x) \equiv 1$ (uniform splitting), 
 the explicit solutions to~\eqref{eq:Z-mgf}, \eqref{eq:f-Z-fixed-point},
and~\eqref{eq:q-in-terms-of-f-Z} are $m_Z (t) = (1-t)^{-\frac{\alpha+2}{\alpha}}$ for $|t| < 1$, and
\[ f_Z (r ) = \frac{r^{2/\alpha}}{\Gamma (1 + \frac{2}{\alpha} )} \re^{-r} , ~ \text{and} ~
q( r) = \frac{2r}{\Gamma (1 + \frac{2}{\alpha} )} \re^{-r^\alpha}, ~ r \in \RP , 
\]
so that $\rho = \Gamma (1/\alpha) / \Gamma (2/\alpha)$ (cf.~\cite[p.~113]{bd2}),
which with~\eqref{eq:g-lim} justifies Remark~\ref{rem:uniform-g}.
\end{remark}

\section*{Acknowledgements}

The authors
are grateful to Michael Grinfeld for
introducing them to deposition and nucleation
models, and to Michael Grinfeld and Paul Mulheran
for stimulating discussions on this topic over several years. The authors
also thank an anonymous referee for a careful reading and
 helpful remarks.

%%%%%%%%%%%%%%%%%%%%%%%%%%%%%%%%%%%%%%%%%%%%%%
%% Supplementary Material, if any, should   %%
%% be provided in {supplement} environment  %%
%% with title and short description.        %%
%%%%%%%%%%%%%%%%%%%%%%%%%%%%%%%%%%%%%%%%%%%%%%
%\begin{supplement}
%\stitle{???}
%\sdescription{???.}
%\end{supplement}

%%%%%%%%%%%%%%%%%%%%%%%%%%%%%%%%%%%%%%%%%%%%%%%%%%%%%%%%%%%%%
%%                  The Bibliography                       %%
%%                                                         %%
%%  imsart-???.bst  will be used to                        %%
%%  create a .BBL file for submission.                     %%
%%                                                         %%
%%  Note that the displayed Bibliography will not          %%
%%  necessarily be rendered by Latex exactly as specified  %%
%%  in the online Instructions for Authors.                %%
%%                                                         %%
%%  MR numbers will be added by VTeX.                      %%
%%                                                         %%
%%  Use \cite{...} to cite references in text.             %%
%%                                                         %%
%%%%%%%%%%%%%%%%%%%%%%%%%%%%%%%%%%%%%%%%%%%%%%%%%%%%%%%%%%%%%

%% if your bibliography is in bibtex format, uncomment commands:
%\bibliographystyle{imsart-number} % Style BST file (imsart-number.bst or imsart-nameyear.bst)
%\bibliography{bibliography}       % Bibliography file (usually '*.bib')

%% or include bibliography directly:

\end{document}